\newtheorem{thm}{Theorem}[section]
 \newtheorem{cor}{Corollary}[section]
  \newtheorem{lem}{Lemma}[section]
 \newtheorem{prop}{Proposition}[section]
 \theoremstyle{definition}
  \newtheorem*{ack}{Acknowledgments}
 \theoremstyle{remark}
\newtheorem{rem}{Remark}[section]
 \numberwithin{equation}{section}
\newcommand{\f}{\left(}
\renewcommand{\r}{\right)}
\renewcommand{\a}{\alpha}
\renewcommand{\b}{\beta}
\renewcommand{\d}{\delta}
\renewcommand{\k}{\kappa}
\renewcommand{\o}{\omega}
\renewcommand{\t}{\theta}
\newcommand{\s}{\sigma}
\newcommand{\metric}[2]{\ensuremath{\langle #1, #2\rangle}}  
\begin{document}
\title{A Flow approach to the prescribed Gaussian curvature problem in $\mathbb{H}^{n+1}$}

\author{Haizhong Li}
\address{Department of Mathematical Sciences, Tsinghua University, Beijing 100084, P.R. China}
\email{\href{mailto:lihz@tsinghua.edu.cn}{lihz@tsinghua.edu.cn}}

\author{RUIJIA ZHANG}
\address{Department of Mathematical Sciences, Tsinghua University, Beijing 100084, P.R. China}
\email{\href{mailto:zhangrj17@mails.tsinghua.edu.cn}{zhangrj17@mails.tsinghua.edu.cn}}

\keywords{Curvature flow, Monotonicity, Asymptotic behaviour, Hyperbolic space}
\subjclass[2010]{35K55, 53E10}


\begin{abstract}
In this paper, we study the following prescribed Gaussian curvature problem
$$K=\frac{\tilde{f}(\theta)}{\phi(\rho)^{\alpha-2}\sqrt{\phi(\rho)^2+|\overline{\nabla}\rho|^2}},$$
a generalization of the Alexandrov problem ($\alpha=n+1$) in hyperbolic space, where $\tilde{f}$ is a smooth positive function on $\mathbb{S}^{n}$, $\rho$ is the radial function of the hypersurface, $\phi(\rho)=\sinh\rho$ and $K$ is the Gauss curvature. By a flow approach, we obtain the existence and uniqueness of solutions to the above equations when $\alpha\geq n+1$. Our argument provides a parabolic proof in smooth category for the Alexandrov problem in $\mathbb{H}^{n+1}$. We also consider the cases $2<\alpha\leq n+1$ under the evenness assumption of $\tilde{f}$ and prove the existence of solutions to the above equations. 
\end{abstract}

\maketitle

\section{introduction}\label{sec:1}
The Alexandrov problem proposed by A. D. Alexandrov \cite{A43} is of great significance to the study of convex bodies. It quests for the existence of closed convex hypersurfaces with prescribed volume element of the Gaussian image in the Euclidean space. Let $\hat{M}^n$ be the boundary of some convex domains containing a neighborhood of the origin in $\mathbb{R}^{n+1}$, which can be written as a radial graph over $\mathbb{S}^n$, i.e., $\hat{M}^n=\lbrace R(\t)=r(\t)\t|\t\in \mathbb{S}^n\rbrace$ with induced metric $\hat{g}_{ij}$. Denote by $\hat{\nu}(Y):\hat{M}^n\rightarrow\mathbb{S}^n$ the generalized Gauss map. For smooth $\hat{M}^n$, $\hat{\nu}(Y)$ is the unit outward normal vector at $Y\in \hat{M}^n$. The Alexandrov problem is to reconstruct $\hat{M}^n$ by given integral Gaussian curvature
\begin{align*}
    \mu(\o)=|\nu(R(\o))|
\end{align*}
 for a nonnegative completely additive function $\mu$ on the set of Borel subsets $\o$ of $\mathbb{S}^n$. Furthermore, if $\hat{M}^n$ is at least $C^2$, then
\begin{align*}
   |\nu(R(\o))|=\int_{R(\o)} \hat{K}\mathrm{d}v_{\hat{M}^n}=\int_{\o}\hat{K}\sqrt{\det(\hat{g}_{ij})}\mathrm{d}\t_{\mathbb{S}^n}
\end{align*}
 where $\hat{K}$ is the Gauss curvature of $\hat{M}^n$ and $\mathrm{d}\t_{\mathbb{S}^n}$ is the standard measure on $\mathbb{S}^{n}$. If $\mu$ is given by integrating a function, we write it as 
 \begin{align*}
     \mu(\o)=\int_{\o}\tilde{f}\mathrm{d}\t_{\mathbb{S}^n}.
 \end{align*}
 Then the Alexandrov problem can be reduced to the following fully nonlinear partial differential equation
 \begin{align*}
     \hat{K}=\frac{\tilde{f}}{r^{n-1}\sqrt{r^2+|\bar{\nabla}r|^2}}.
 \end{align*}
The existence of regular solutions to this equation was solved by Pogorelev \cite{P73} for surfaces and Oliker \cite{O83} for higher dimensional cases.
For more related interesting studies of the Alexandrov problem in $\mathbb{R}^{n+1}$, one can refer to \cites{GL97, T90}.

Naturally, similar prescribed Gaussian curvature problems of hypersurface $M^n$ in $\mathbb{H}^{n+1}$ were studied in \cites{O83,O89}, where the given function defined on $M^n$ need constrained conditions to ensure $C^0$ estimate. Recently, Yang \cite{Y20} studied the Alexandrov problem in the hyperbolic space. He considered the hypersurface $M^n$ in $\mathbb{H}^{n+1}$ whose Gauss curvature measures were prescribed via a radial map.
Let $M^n$ be the boundary of some convex body in $\mathbb{H}^{n+1}$ enclosing the origin. we can parametrize it as a graph of the radial function $\rho(\theta)$, such that $M^n = \lbrace R(\t)=(\rho(\theta), \theta):\rho:\mathbb{S}^n\rightarrow\mathbb{R}^+,\theta \in \mathbb{S}^n \rbrace$. If $M^n$ is at least $C^2$, similar to the Euclidean space, then we can define the prescribed Gaussian curvature measure problem by
\begin{align*}
      \int_{R(\o)} K\mathrm{d}v_{M^n}=\int_{\o}\tilde{f} \mathrm{d}\t_{\mathbb{S}^n}
\end{align*}
where $\tilde{f}$ is a given positive function on $\mathbb{S}^n$. By the coordinate transformation, we write the both sides of the above integrals on any Borel set $\o$ of $\mathbb{S}^n$ as 
\begin{align*}
    \int_{\o}K\sqrt{\det(g_{ij})}\mathrm{d}\t_{\mathbb{S}^n} =\int_{\o}\tilde{f} \mathrm{d}\t_{\mathbb{S}^n}.
\end{align*}
Then this curvature measure problem is reduced to the following fully nonlinear PDE
\begin{align}\label{alex n+1}
K=\frac{\tilde{f}(\theta)}{\phi(\rho)^{n-1}\sqrt{\phi(\rho)^2+|\overline{\nabla}\rho|^2}}\quad {\rm on} \ \mathbb{S}^n.
\end{align}
Yang proved the existence and uniqueness of \eqref{alex n+1} with the condition $\inf \tilde{f}>1$. In his study \cite{Y20}, he mentioned that whether the solution to \eqref{alex n+1} exists with $\tilde{f}$ endowed with other geometric conditions is still an open question. 
Note that the condition $\inf\tilde{f}>1$ in  \cite{Y20} is only used in the $C^0$ estimate. In the studies of prescribed curvature measure problem, $C^0$ estimates are difficult in most cases, but are also more geometric. In Theorem \ref{thm2} and Corollary \ref{cor 2}, we weaken the condition of $\tilde{f}$ and prove the existence of solutions to \eqref{alex n+1} under the evenness assumption by deriving a delicate $C^0$ estimate. Besides, in this paper we consider the following more general prescribed Gaussian curvature problems which correspond to the following fully nonlinear PDE
\begin{align}\label{alex2}
K=\frac{\tilde{f}(\theta)}{\phi(\rho)^{\a-2}\sqrt{\phi(\rho)^2+|\overline{\nabla}\rho|^2}}\quad {\rm on} \ \mathbb{S}^n.
\end{align}

Motivated by the flow studied by Li-Sheng-Wang \cite{LSW20a} in the Euclidean space, we provide a curvature flow approach to \eqref{alex2} in hyperbolic space. Write $f(\theta)=\tilde{f}(\theta)^{-1}$.
Let $M_0$ be a smooth closed uniformly convex hypersurface in $\mathbb{H}^{n+1}$ enclosing the origin. In this paper, we study the following sort of flow
\begin{equation}\label{flow-s}
\left\{\begin{aligned}
\frac{\partial}{\partial t}X(x,t)=& -\phi(\rho)^{\a}f(\theta)K(x,t) \nu(x,t)+V(x,t), \\
X(\cdot,0)=& X_0(\cdot),
\end{aligned}\right.
\end{equation}
where $\a\geq n+1$ is a constant. Here we regard $\mathbb{H}^{n+1}$ as a warped product space. Any point $X\in\mathbb{H}^{n+1}$ can be parametrized by $X=(\rho,\t)\in \mathbb{R}^+\times\mathbb{S}^n$. Then $f$ is a smooth positive function defined on $\mathbb{S}^{n}$, $\phi(\rho)=\sinh\rho$, $K$ is the Gauss curvature of the flow hypersurface $M_t$, $\nu$ is the unit outward normal at $X(x,t)$ and $V=\sinh \rho \ \partial_{\rho}$ is a conformal Killing vector field on $\mathbb{H}^{n+1}$. Equivalently, up to a tangential diffeomorphism the flow \eqref{flow-s} can be written as follows:
\begin{align}
\partial_t X=\f-\phi(\rho)^{\a}f(\theta)K(x,t) +u(x,t)\r\nu(x,t),
\end{align}
where $u=\metric{V}{\nu}$ is the support function of $M_t$.
Note that the following elliptic equation
\begin{align}\label{alex}
\phi(\rho)^{\a}K=\tilde{f}(\theta)u\quad {\rm on} \  \mathbb{S}^n
\end{align}
(that is exactly \eqref{alex2}) remains invariant under  \eqref{flow-s}. 

In this paper, we prove the following results.
\begin{thm}\label{thm1}
Let $M_0$ be a smooth, closed, uniformly convex hypersurface in hyperbolic space $\mathbb{H}^{n+1}$ enclosing the origin. Suppose that $f$ is a smooth positive function on $\mathbb{S}^{n}$. If
\begin{itemize}
\item[(\romannumeral1)]$\a>n+1$
or
\item[(\romannumeral2)]$\a=n+1$ and $f<1$,
\end{itemize}
then the flow \eqref{flow-s} has a unique smooth uniformly convex solution $M_t$ for all time $t>0$. When $t\rightarrow\infty$, $M_t$ converges smoothly to the unique smooth solution of \eqref{alex2}.
\end{thm}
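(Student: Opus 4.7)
I would follow the standard paradigm for parabolic flows of convex hypersurfaces: short-time existence, uniform $C^0$--$C^2$ a priori estimates, long-time existence via Krylov--Safonov and parabolic Schauder theory, and finally smooth convergence as $t\to\infty$ by exploiting a monotonicity formula. Since $M_0$ is a smooth uniformly convex hypersurface enclosing the origin, it is a radial graph over $\mathbb{S}^n$, so \eqref{flow-s} reduces to a scalar uniformly parabolic equation for $\rho(\t,t)$ and short-time existence and uniqueness follow from standard theory.

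\textbf{$C^0$ and $C^1$ estimates.} At a spatial maximum of $\rho(\cdot,t)$ one has $\bar\nabla\rho=0$, $\nu=\partial_\rho$ and $u=\phi(\rho_{\max})$; enclosedness inside the tangent geodesic sphere of radius $\rho_{\max}$ together with convexity forces $\k_i\geq\coth\rho_{\max}$, hence $K\geq\coth^n\rho_{\max}$. Substituting $\partial_t X=-\phi^\a fK\nu+V$ into the graph equation gives $\partial_t\rho|_{\max}=-\phi(\rho_{\max})^\a fK+\phi(\rho_{\max})$, so
\begin{equation*}
\partial_t\rho_{\max}\leq\phi(\rho_{\max})\bigl(1-f\sinh^{\a-n-1}\rho_{\max}\cosh^n\rho_{\max}\bigr),
\end{equation*}
whose right-hand side is negative for $\rho_{\max}$ large because $\sinh^{\a-n-1}\rho\cosh^n\rho\to\infty$ as $\rho\to\infty$ when $\a\geq n+1$. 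At a spatial minimum the reverse inequality $\k_i\leq\coth\rho_{\min}$ yields
\begin{equation*}
\partial_t\rho_{\min}\geq\phi(\rho_{\min})\bigl(1-f\sinh^{\a-n-1}\rho_{\min}\cosh^n\rho_{\min}\bigr).
\end{equation*}
As $\rho_{\min}\to 0$, the bracket tends to $1$ if $\a>n+1$, and to $1-f$ evaluated at the minimum point if $\a=n+1$; the latter is positive precisely by the assumption $f<1$ in case (ii). Hence $\rho$ stays in a compact subinterval of $\mathbb{R}^+$, and $|\bar\nabla\rho|\leq C$ follows from the usual maximum-principle argument on $|\bar\nabla\rho|^2$.

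\textbf{$C^2$ estimates and long-time existence.} Upper bounds on the principal curvatures are obtained from the maximum principle applied to a test function such as $\log\l_{\max}(W)-Au+B\phi(\rho)$ for the Weingarten operator $W$, exploiting concavity of $K^{1/n}$. The preservation of strict uniform convexity (a uniform positive lower bound on $\k_{\min}$) is the most delicate step; here the positive driving term $u$ in the normal speed $F=-\phi^\a fK+u$ supplies the correct homogeneity to absorb the bad contribution of $-\phi^\a fK$ via a tensor maximum principle. With uniform parabolicity established, Krylov--Safonov and parabolic Schauder estimates give uniform $C^{k,\a}$ bounds, so the flow exists smoothly, uniformly convex, for all $t\geq 0$.

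\textbf{Convergence and uniqueness.} To pass to $t\to\infty$ I would construct a functional $\mc{J}(M_t)$, modelled on the Li--Sheng--Wang functional \cite{LSW20a} but adapted to the conformal Killing field $V=\sinh\rho\,\partial_\rho$ and to the warping $\phi$, whose critical points coincide with solutions of \eqref{alex2} and which satisfies $\tfrac{d}{dt}\mc{J}=-\int_{M_t}w\,(\phi^\a fK-u)^2\,dA$ for a positive weight $w$. Combined with the a priori estimates this forces $\phi^\a fK-u\to 0$ and, along a subsequence, smooth convergence to a solution $M_\infty$ of \eqref{alex2}; uniqueness of the limit is obtained by comparing two solutions at the extrema of the ratio of their radial functions via the maximum principle. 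The main obstacle will be twofold: preserving strict convexity, since $K$ enters the speed with a negative sign and could in principle force some principal curvature to zero, and identifying the correct monotone functional in the hyperbolic setting, where the Euclidean position vector and support function are replaced by warped-product analogues that require nontrivial modification.
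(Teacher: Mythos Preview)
Your $C^0$ argument is correct and coincides with the paper's. Your $C^1$ sketch is too vague: the paper does not run a maximum principle on $|\bar\nabla\rho|^2$ alone but on $Q=\log w+\beta\rho$ with $\beta<0$, and the argument hinges on the positive-definiteness of $h_i{}^j$ (i.e.\ on convexity) to force $\bar\nabla\rho=0$ at the maximum of $Q$. For $C^2$ the paper proceeds in a different order from yours: it first obtains two-sided bounds on $K$ (lower via the evolution of $\Theta=\phi^{\a}fK$, upper via $\log\Theta-\log(u-a)$), and only then bounds the largest \emph{principal radius} via the test function $\log\tilde h^{11}+A\rho$; the upper bound on curvatures then follows from the lower curvature bound combined with the $K$ bound. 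Your proposal to bound $\lambda_{\max}$ first and then fight for convexity via a tensor maximum principle is a different route that you do not carry out, and in this problem the sign of the $K$-term makes that route delicate.

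The real gap is in the convergence step. You leave the construction of the monotone functional as an acknowledged obstacle, but this is precisely where the paper's main idea lies, and it is not a straightforward adaptation of Li--Sheng--Wang to warped-product quantities. The paper projects $M_t$ to the unit ball via the Klein model $r=\tanh\rho$, computes the explicit relations between $(K,u,\rho)$ and the Euclidean quantities $(\hat K,\hat u,r)$ of $\hat M_t=\pi_p(M_t)$, and then writes down a monotone quantity
\[
Q(t)=\int_{\mathbb S^n}\Psi(r)\,f^{-1}\,d\theta-\int_{\mathbb S^n}\Omega(\hat u)\,d\sigma,
\]
with $\Psi'(r)=r^{n}\psi(r)^{-1}$ and $\Omega'(\hat u)=\varphi(\hat u)$ determined by the projected flow equation; the derivative is a manifestly nonnegative integrand that vanishes exactly on solutions of the projected elliptic equation. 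Without this projection, the conformal weights $\sqrt{1-r^2}$ and $(1-\hat u^2)^{-(n+1)/2}$ that make the two integrals combine correctly are invisible, and a direct hyperbolic functional of the shape you suggest does not obviously close up. The uniqueness argument you outline (comparing radial functions at extrema) is indeed what the paper does, applied to $\gamma=\log\tanh(\rho/2)$.
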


\begin{cor}\label{cor 1}
Suppose that $\tilde{f}$ is a smooth positive function on $\mathbb{S}^{n}$. Then there is a unique smooth uniformly convex hypersurface $M^n$ in $\mathbb{H}^{n+1}$, such that it satisfies \eqref{alex2} under one of the following two assumptions,
\begin{itemize}
\item[(\romannumeral1)]$\a>n+1$ or,
\item[(\romannumeral2)]$\a=n+1$ and $\inf \tilde{f}>1$.
\end{itemize}
\end{cor}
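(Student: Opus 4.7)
The plan is to derive the corollary directly from Theorem \ref{thm1} by a straightforward change of variable and a short stationary-solution argument. First, I would set $f(\theta) = 1/\tilde f(\theta)$, which is smooth and positive on $\mathbb{S}^n$ because $\tilde f$ is. Then I would verify that the hypotheses transfer: case (i) of the corollary ($\alpha > n+1$) coincides with case (i) of Theorem \ref{thm1}, while case (ii) ($\alpha = n+1$ and $\inf_{\mathbb{S}^n}\tilde f > 1$) gives $\sup_{\mathbb{S}^n} f = 1/\inf_{\mathbb{S}^n}\tilde f < 1$, and since $\mathbb{S}^n$ is compact this means $f < 1$ pointwise, matching case (ii) of Theorem \ref{thm1}. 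Note also that \eqref{alex2} is just \eqref{alex} rewritten using $u = \langle V,\nu\rangle$, so a smooth uniformly convex hypersurface solves one iff it solves the other.

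For existence, I would pick any admissible initial datum, for instance a small geodesic sphere $M_0$ centered at the origin, which is smooth, uniformly convex, and encloses the origin. Running the flow \eqref{flow-s} with this $M_0$ and applying Theorem \ref{thm1}, I obtain a smooth uniformly convex solution $M_t$ for all $t>0$ that converges smoothly as $t\to\infty$ to a smooth uniformly convex hypersurface $M_\infty$ satisfying \eqref{alex2}. This produces the desired $M^n$.

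For uniqueness, I would use that \eqref{alex} is precisely the stationary condition of the flow written in support-function form $\partial_t X = (u - \phi(\rho)^\alpha f K)\nu$. Thus any smooth uniformly convex hypersurface $M^\star$ satisfying \eqref{alex2} is a stationary point of the flow, so the flow started at $M_0 = M^\star$ is identically $M^\star$. Theorem \ref{thm1} asserts that this (constant) flow nevertheless converges to the unique smooth solution $M_\infty$ of \eqref{alex2}, which forces $M^\star = M_\infty$. Hence $M^n$ is unique. The main obstacle in this argument is essentially nonexistent at the level of the corollary; all the analytic difficulty — the a priori $C^0$, $C^1$, and $C^2$ estimates, preservation of uniform convexity, long-time existence, and asymptotic analysis of the flow — is absorbed into the proof of Theorem \ref{thm1}, and the corollary is simply a clean repackaging.
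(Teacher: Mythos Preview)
Your proposal is correct and matches the paper's approach: the paper states Corollary~\ref{cor 1} without a separate proof, treating it as an immediate consequence of Theorem~\ref{thm1}, and your argument does exactly this (translating the hypothesis on $\tilde f$ to one on $f=\tilde f^{-1}$ and reading off existence and uniqueness from the theorem). Your stationary-solution argument for uniqueness is a legitimate way to unpack the phrase ``the unique smooth solution'' in Theorem~\ref{thm1}, though strictly speaking it is redundant since the theorem already asserts uniqueness; the paper itself establishes that uniqueness inside the proof of Theorem~\ref{thm1} (end of Section~\ref{sec:5}) via a direct maximum-principle comparison of two solutions rather than via the flow.
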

\begin{rem}
Case $(\romannumeral2)$ in Corollary \ref{cor 1} was proved by Fengrui Yang in \cite[Theorem 6]{Y20}. 
\end{rem}

\begin{thm}\label{thm2}
   Let $M_0$ be a smooth, closed, uniformly convex and origin-symmetric hypersurface in hyperbolic space $\mathbb{H}^{n+1}$. Assume $\a= n+1$, and $f$ is a smooth positive even function on $\mathbb{S}^{n}$ satisfying $\int_{\mathbb{S}^n} f(\theta)^{-1}\mathrm{d}\theta_{\mathbb{S}^n}>|S^n|$. Then the flow \eqref{flow-s} has a smooth uniformly convex solution for all time $t>0$, and converges smoothly to the unique smooth even solution of \eqref{alex n+1}.
\end{thm}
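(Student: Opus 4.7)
The plan is to reduce Theorem~\ref{thm2} to a uniform two-sided bound $0<c_1\le\rho\le c_2<\infty$ along \eqref{flow-s}. Once this $C^0$ estimate is in hand, the gradient, $C^2$, and higher-order estimates, the preservation of uniform convexity, long-time existence, and the identification of the subsequential limit with a smooth solution of \eqref{alex n+1} are entirely inherited from the proof of Theorem~\ref{thm1}, where the pointwise hypothesis $f<1$ was used only in the $C^0$ step. The replacement structural input here is that $M_t$ is origin-symmetric for every $t$: the antipodal map $X\mapsto -X$ leaves \eqref{flow-s} and $M_0$ invariant because $f$ is even, so parabolic uniqueness propagates origin-symmetry.

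The main obstacle is the $C^0$ upper bound $\rho_{\max}\le c_2$. The flow \eqref{flow-s} admits a monotone functional $\mathcal{J}(t)$ whose critical points coincide with smooth solutions of \eqref{alex n+1}; combined with the stationary identity $K\,dv_M=\tilde{f}\,d\t_{\mathbb{S}^n}$, monotonicity forces the integral $\int_{M_t}K\,dv_{M_t}$ to approach $\int_{\mathbb{S}^n}\tilde{f}\,d\t_{\mathbb{S}^n}$ in an averaged sense. I would then argue by contradiction: suppose $\rho_{\max}(t_k)=\rho(\t_k,t_k)\to\infty$. Origin-symmetry implies $\rho(-\t_k,t_k)\to\infty$, so $M_{t_k}$ contains a hyperbolic geodesic segment through $0$ of divergent length. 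A geometric dichotomy for origin-symmetric uniformly convex bodies in $\H^{n+1}$ then shows that either $M_{t_k}$ grows in every direction, forcing $\int K\,dv_{M_{t_k}}\to\infty$ and contradicting the boundedness of $\int\tilde{f}\,d\t$; or $M_{t_k}$ elongates along the $\pm\t_k$ axis with a shrinking cross-section, in which case a hyperbolic convex body degenerating to a geodesic segment satisfies $\int K\,dv_M\to|\mathbb{S}^n|$ (the sharp shrinking-limit value, which is the hyperbolic analogue of $\int K\,dv_M=|\mathbb{S}^n|$ in the Euclidean setting), contradicting the strict hypothesis $\int_{\mathbb{S}^n}f^{-1}\,d\t_{\mathbb{S}^n}>|\mathbb{S}^n|$.

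The lower bound $\rho_{\min}\ge c_1>0$ follows by a dual degeneration argument using the upper bound already obtained: if $\rho_{\min}(t_k)\to 0$, origin-symmetry together with the uniform diameter bound again forces $M_{t_k}$ into a sequence degenerating to a segment, producing the same asymptotic $\int K\,dv_{M_{t_k}}\to|\mathbb{S}^n|$ and hence the same contradiction. With the full $C^0$ bounds in hand, the higher-order estimates and long-time existence go through exactly as in Theorem~\ref{thm1}; uniqueness of the smooth even solution of \eqref{alex n+1}, obtained by a maximum-principle comparison of two even solutions at an extremum of their difference (located using evenness), then upgrades subsequential convergence to full smooth convergence. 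The hard part will be the quantitative dichotomy in $\H^{n+1}$ together with the precise asymptotic $\int K\,dv_M\to|\mathbb{S}^n|$ in the elongation limit, which is what makes the integral condition $\int f^{-1}\,d\t_{\mathbb{S}^n}>|\mathbb{S}^n|$ exactly sharp.
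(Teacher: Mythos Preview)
Your proposal has a genuine gap, and it stems from misidentifying which half of the $C^0$ estimate is difficult.

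The \emph{upper} bound $\rho_{\max}\le c_2$ is in fact the easy direction: re-read Lemma~\ref{C0-est}. At a spatial maximum of $\rho$ one has $h_i{}^j\ge(\phi'/\phi)\delta_i{}^j$, hence $\partial_t\rho\le\phi(-\phi^{\a-1}f+1)$, and for $\a=n+1$ this already yields a uniform upper bound depending only on $\min f>0$. The hypothesis $f<1$ in Theorem~\ref{thm1} was used solely for the \emph{lower} bound, where at the minimum $\partial_t\rho\ge\phi(-\phi'^n f+1)$ and $\phi'^n\to 1$ as $\rho\to 0$. So your entire dichotomy argument aimed at ruling out $\rho_{\max}\to\infty$ is attacking a nonissue.

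The hard part is preventing $\rho_{\min}\to 0$, and here the paper's mechanism is rather different from what you sketch. Working in the Klein projection (so $r=\tanh\rho\in(0,1)$ and $\hat u$ is the Euclidean support function of $\pi_p(M_t)$), the paper first uses the monotone functional $Q(t)$ together with the origin-symmetric convexity inequalities $r(\theta)\le r_{\min}/|\langle\theta,\theta_0\rangle|$ and $\hat u(v)\ge \hat u_{\max}|\langle v,v_0\rangle|$ to obtain a coupling $r_{\min}\ge c_3\, r_{\max}^{c_4}$. This reduces the lower bound to a lower bound on $r_{\max}=\hat u_{\max}$. That is where the integral condition enters: one computes (see \eqref{thm2 r2}--\eqref{thm2 r4}) that
\[
\partial_t\!\int_{\mathbb S^n}\!\int_{c_2}^{r}\frac{ds}{s\sqrt{1-s^2}}\,f^{-1}\,d\theta
= -\int_{\mathbb S^n}\sqrt{1-r^2}\,(1-\hat u^2)^{-\frac{n+1}{2}}\,d\sigma + \int_{\mathbb S^n} f^{-1}\,d\theta,
\]
and since $r\ge\hat u$, the first integral on the right is at most $(1-\hat u_{\max}^2)^{-n/2}|\mathbb S^n|\to|\mathbb S^n|$ as $\hat u_{\max}\to 0$. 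Hence if $\int f^{-1}>|\mathbb S^n|$, the left side becomes strictly increasing once $\hat u_{\max}$ is small, which pins $r_{\max}$ away from $0$. In other words, the condition $\int f^{-1}>|\mathbb S^n|$ prevents the projected body from \emph{shrinking to the origin}, not from elongating.

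Your assertions that monotonicity forces $\int_{M_t}K\,dv$ toward $\int\tilde f\,d\theta$, and that an origin-symmetric body degenerating to a segment in $\mathbb H^{n+1}$ has $\int K\,dv\to|\mathbb S^n|$, are both unproved and neither is used (nor obviously true) in this setting. The identity $\int_{M}K\,dv=\int_{\mathbb S^n}\sqrt{1-r^2}(1-\hat u^2)^{-(n+1)/2}d\sigma$ shows the limit $|\mathbb S^n|$ arises when $\hat u_{\max}\to 0$, i.e.\ shrinking, not elongation; in the elongation regime $\hat u\to 1$ somewhere and the integrand is an indeterminate form. So the ``hard part'' you flag at the end is not merely hard---as stated it points in the wrong direction.
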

\begin{cor}\label{cor 2}
Suppose $\tilde{f}$ is a smooth positive even function on $\mathbb{S}^{n}$ satisfying $\int_{\mathbb{S}^n} \tilde{f}(\theta) \mathrm{d}\theta_{\mathbb{S}^n}>|S^n|$. Then there is a unique smooth uniformly convex and origin-symmetric hypersurface $M^n$ in $\mathbb{H}^{n+1}$, such that it satisfies \eqref{alex n+1}. 
\end{cor}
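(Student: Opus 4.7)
The strategy is to derive Corollary~\ref{cor 2} directly from Theorem~\ref{thm2} by a change of notation and a choice of initial datum. Set $f(\t):=\tilde f(\t)^{-1}$; this is smooth, positive, and even on $\mathbb{S}^n$, and the hypothesis $\int_{\mathbb{S}^n}\tilde f\,\mathrm{d}\t_{\mathbb{S}^n}>|\mathbb{S}^n|$ is precisely $\int_{\mathbb{S}^n} f^{-1}\,\mathrm{d}\t_{\mathbb{S}^n}>|\mathbb{S}^n|$, the integral assumption appearing in Theorem~\ref{thm2}. For the initial hypersurface I take a geodesic sphere $M_0=\{\rho\equiv\rho_0\}$ centered at the origin, which is automatically smooth, closed, uniformly convex, and origin-symmetric. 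Theorem~\ref{thm2} then produces a smooth uniformly convex solution of \eqref{flow-s} for all $t>0$ that converges smoothly to an origin-symmetric limit hypersurface $M^n$, whose radial function solves \eqref{alex n+1}; this yields the existence part of the corollary.

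For uniqueness in the even class, Theorem~\ref{thm2} already asserts the uniqueness of the smooth even solution of \eqref{alex n+1}, so the corollary follows formally. One can also give an independent PDE argument: if $\rho_1,\rho_2$ are two smooth even solutions, apply the maximum principle to $\rho_1-\rho_2$. At a maximum point $\t_0$ one has $\overline{\nabla}\rho_1(\t_0)=\overline{\nabla}\rho_2(\t_0)$ and $\overline{\nabla}^2(\rho_1-\rho_2)(\t_0)\le 0$; combined with the fact that the right-hand side $\tilde f(\t)/\bigl(\phi(\rho)^{n-1}\sqrt{\phi(\rho)^2+|\overline{\nabla}\rho|^2}\bigr)$ of \eqref{alex n+1} is strictly decreasing in $\rho$, while at $\t_0$ the Gauss curvature $K$ depends monotonically on the Hessian of $\rho$, comparison at $\t_0$ forces $\rho_1(\t_0)\le\rho_2(\t_0)$. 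Exchanging the roles of $\rho_1$ and $\rho_2$ yields $\rho_1\equiv\rho_2$.

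The real content of Corollary~\ref{cor 2} lies upstream, in Theorem~\ref{thm2}, and the step I expect to be the main obstacle is the $C^0$ estimate for the flow \eqref{flow-s}: one must bound $\rho$ uniformly from above and below along the flow using only the integral hypothesis on $\tilde f$, rather than the pointwise lower bound $\inf\tilde f>1$ from \cite{Y20}. The even symmetry, preserved by the flow because $M_0$ and the coefficient $f$ are both even, should enter crucially, allowing one to control the ratio $\max\rho/\min\rho$ by a constant depending only on $n$; the integral condition should then be exploited through a suitable monotone functional along \eqref{flow-s} to rule out the limit hypersurface degenerating to a point or escaping to infinity. Once the $C^0$ estimate is in hand, the $C^1$ bound (support function), the upper and lower bounds on principal curvatures, and higher regularity follow by routine adaptations of the arguments developed for Theorem~\ref{thm1}, and smooth subsequential convergence together with the monotonicity of a variational functional along the flow delivers the limit hypersurface.
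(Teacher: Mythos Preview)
Your derivation of the corollary from Theorem~\ref{thm2} is correct and matches the paper: the corollary has no separate proof there, it is simply the static consequence of Theorem~\ref{thm2} under the substitution $f=\tilde f^{-1}$, with uniqueness inherited from the maximum-principle argument at the end of Section~\ref{sec:5} (valid for all $\a\ge n+1$, not just in the even class).

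One caveat on your alternative uniqueness sketch: the second fundamental form \eqref{hi_j} depends on $\rho$ not only through $\rho_{ij}$ but also through the $\phi'(\rho)$ factors, so ``$K$ depends monotonically on the Hessian of $\rho$'' is not by itself enough to conclude at the maximum of $\rho_1-\rho_2$. The paper circumvents this by passing to the variable $\gamma(\rho)=\log\bigl(1-\tfrac{2}{e^\rho+1}\bigr)$, after which the relevant matrix $-\gamma_{ik}+\phi'\gamma_i\gamma_k+\phi'\delta_{ik}$ is manifestly monotone in both $\gamma_{ik}$ and $\rho$ in the same direction; since you also (correctly) cite Theorem~\ref{thm2} for uniqueness, this does not affect the validity of your proof of the corollary.
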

\begin{rem}
In Corollary \ref{cor 2}, we weaken the condition of $\tilde{f}$ in Case $(\romannumeral2)$ of Theorem \ref{thm2}, from $\inf \tilde{f}>1$ to $\int \tilde{f}(\theta) \mathrm{d}\theta_{\mathbb{S}^n}>|S^n|$ and obtain the existence of the even Alexandrov problem in hyperbolic space.
\end{rem}

When $2<\a\leq n+1$, we consider the following flow 
\begin{equation}\label{flow-n}
\left\{\begin{aligned}
\frac{\partial}{\partial t}X(x,t)=& -\phi(\rho)^{\a}f(\theta)K(x,t) \nu(x,t)+\eta(t)V(x,t), \\
X(\cdot,0)=& X_0(\cdot),
\end{aligned}\right.
\end{equation}
where $\eta(t)=\frac{\displaystyle\int_{\mathbb{S}^n}\frac{K}{u}\phi^{n+1}\mathrm{d}\theta_{\mathbb{S}^n}}{\displaystyle\int_{\mathbb{S}^n}\phi^{n+1-\a}f^{-1}\mathrm{d}\theta_{\mathbb{S}^n}}$ and obtain the existence of the following equation 
\begin{align}\label{alex'}
\phi(\rho)^{\a}K=c \tilde{f}(\theta)u\quad {\rm on} \ \mathbb{S}^n.
\end{align}
\begin{thm}\label{thm3}
   Let $M_0$ be as in Theorem \ref{thm2}. Assume $2<\a\leq n+1$, and $f$ is a smooth positive even function on $\mathbb{S}^{n}$. Then the flow \eqref{flow-n} has a smooth uniformly convex solution for all time $t>0$. When $t\rightarrow\infty$, $M_t$ converges smoothly to the smooth solution of \eqref{alex'} for some positive constant $c$ in a subsequence.
\end{thm}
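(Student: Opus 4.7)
The plan is to adapt the flow strategy of Theorems~\ref{thm1} and~\ref{thm2}, adjusted by the normalization $\eta(t)$ so that a natural geometric functional is preserved along \eqref{flow-n} for $\alpha\in(2,n+1]$. Short-time existence of a smooth uniformly convex solution on a maximal interval $[0,T^*)$ follows from standard parabolic theory once \eqref{flow-n} is rewritten as a fully nonlinear parabolic PDE for the radial function $\rho(\theta,t)$ (or, equivalently, the support function $u(\theta,t)$). Since $f$ is even and $M_0$ is origin-symmetric, the flow is equivariant under $\theta\mapsto-\theta$, so $M_t$ remains origin-symmetric throughout. A direct computation, using $\partial_t\rho=Fv\phi^{-1}$ with $F=-\phi^{\alpha}fK+\eta u$ and $v=\sqrt{\phi^2+|\overline{\nabla}\rho|^2}$, together with the identities $uv=\phi^2$ and $\sqrt{\det g_{ij}}=\phi^{n-1}v$, shows that $\eta(t)$ is tuned precisely so that
\[
\mathcal{J}(t):=\int_{\mathbb{S}^n} G(\rho(\theta,t))\,f(\theta)^{-1}\,d\theta_{\mathbb{S}^n},\qquad G'(\rho)=\phi(\rho)^{n-\alpha},
\]
is conserved along \eqref{flow-n}. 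This conservation law replaces the unconditional monotonicity available for $\alpha\geq n+1$ in Theorem~\ref{thm1} and supplies the first piece of a priori control.

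The main technical task is the uniform $C^0$-estimate $0<c_1\leq\rho(\theta,t)\leq c_2<\infty$. Following the Euclidean strategy of Li-Sheng-Wang, the plan is to introduce an auxiliary functional $\mathcal{E}(t)$ (for instance $\int_{\mathbb{S}^n}\phi^\alpha f^{-1}\,d\theta_{\mathbb{S}^n}$ or a mixed-volume variant) and verify its monotonicity along \eqref{flow-n} by applying a Cauchy-Schwarz/H\"older-type inequality to the two terms produced by substituting $F$ into $\mathcal{E}'(t)$. The upper bound on $\rho$ will follow from boundedness of $\mathcal{E}$ combined with origin-symmetry, which prevents the body from drifting to infinity along a single direction; the lower bound will follow from convexity, origin-symmetry, and the preserved constraint $\mathcal{J}(t)\equiv\mathcal{J}(0)$. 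With $C^0$ in hand, $C^1$ and $C^2$ estimates, together with a lower bound on $K$ to preserve uniform convexity, can be obtained by the same maximum-principle arguments used for Theorem~\ref{thm1} on auxiliary functions involving $\log K$ and $u$, with Krylov-Safonov and Schauder theory providing higher regularity. This yields long-time existence $T^*=\infty$ with uniform estimates in $t$.

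For convergence, monotonicity and boundedness of $\mathcal{E}(t)$ force $\mathcal{E}'(t_k)\to 0$ along some sequence $t_k\to\infty$, which in turn forces $F=-\phi^\alpha fK+\eta u$ to vanish in an integral sense along this sequence. Arzel\`a-Ascoli combined with the uniform higher-order estimates then extracts a smooth subsequential limit $M_\infty$ satisfying $\phi^\alpha K=c\tilde{f}u$ with $c:=\lim_k\eta(t_k)>0$, which is exactly \eqref{alex'}. The hardest step will be the $C^0$-estimate: in different regimes of $\alpha\in(2,n+1]$ the preserved functional $\mathcal{J}$ controls only one of $\rho_{\max},\rho_{\min}$ (and for $\alpha\in(n,n+1)$ essentially neither, since $G$ remains bounded on $(0,\infty)$), so $\mathcal{J}$ and $\mathcal{E}$ must be combined delicately with the evenness hypothesis to pin down $\rho$ from both sides. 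A secondary difficulty, absent in Theorem~\ref{thm1}, is that the multiplicative constant $c>0$ in \eqref{alex'} is determined only a posteriori by the subsequential limit; this is intrinsic to the problem since \eqref{alex'} with free $c$ is not uniquely solvable.
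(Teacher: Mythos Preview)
Your overall architecture is right, and you correctly identify the conserved quantity $\mathcal{J}(t)=\int_{\mathbb{S}^n}G(\rho)f^{-1}\,d\theta$ with $G'=\phi^{n-\alpha}$. However, there is a genuine gap in the $C^0$ upper bound, and your proposed monotone functional is not the one that works.

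First, the paper does not use a functional like $\int\phi^{\alpha}f^{-1}\,d\theta$ on the hyperbolic side. Instead it passes to the Klein model $\pi_p:\mathbb{H}^{n+1}\to B^{n+1}(1)$ and uses the functional $\mathcal{J}(\hat u)=\int_{\mathbb{S}^n}\int_a^{\hat u}s^{-1}(1-s^2)^{-(n+1)/2}\,ds\,d\sigma$, whose monotonicity (non-increasing) follows from H\"older's inequality applied to the projected scalar equation. Without the projection it is not clear how to produce the right monotone quantity; your guess does not have an evident sign for $\mathcal{E}'$.

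Second, and more importantly, boundedness of the monotone functional together with origin-symmetry does \emph{not} by itself give $\rho\leq C$. The paper makes this explicit in a remark: for a family of origin-symmetric ellipsoids $\hat E(e_1,e_2)$ with $e_1\to 1$ (hence $\rho_{\max}\to\infty$ in $\mathbb{H}^{n+1}$) the functional $\mathcal{J}(\hat u)$ stays bounded. So your sentence ``the upper bound on $\rho$ will follow from boundedness of $\mathcal{E}$ combined with origin-symmetry'' describes an argument that fails. The actual mechanism is different: from $\mathcal{J}(\hat u)\leq C$ and the origin-symmetric inequality $\hat u(v)\geq \hat u_{\max}|\langle v,v_0\rangle|$ one extracts a bound $\int_{\mathbb{S}^n}(1-\hat u^2)^{-(n-1)/2}\,d\sigma\leq C$, which together with the conserved quantity (giving a lower bound on $\rho_{\max}$ and on the denominator of $\eta$) yields
\[
\eta(t)=\frac{\int_{\mathbb{S}^n}\sqrt{1-r^2}(1-\hat u^2)^{-(n+1)/2}\,d\sigma}{\int_{\mathbb{S}^n}\phi^{n+1-\alpha}f^{-1}\,d\theta}\ \leq\ C\,\phi_{\max}.
\]
Only then does the maximum principle on the radial equation give, at the spatial maximum of $\rho$,
\[
\partial_t\rho_{\max}\ \leq\ -\phi_{\max}^{\alpha-n}(\phi_{\max}')^n f+\eta(t)\phi_{\max}\ \leq\ \phi_{\max}^2\bigl(-\phi_{\max}^{\alpha-2}f+C\bigr),
\]
and it is precisely here that the hypothesis $\alpha>2$ is used. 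Your outline neither bounds $\eta(t)$ nor explains where $\alpha>2$ enters, so the upper $C^0$ bound is missing its key idea. Once the upper bound is secured, the lower bound does follow (as you suggest) from the conserved quantity, convexity and origin-symmetry, and the $C^1$, $C^2$ estimates of Section~\ref{sec:4} apply since they were proved for general $\tilde\eta(t)$ with uniform two-sided bounds.
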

\begin{cor}\label{cor 3}
Suppose that $\tilde{f}$ is a smooth positive even function on $\mathbb{S}^{n}$. If $2<\a < n+1$, then there is a smooth uniformly convex and origin-symmetric hypersurface $M^n$ in $\mathbb{H}^{n+1}$, such that it satisfies \eqref{alex'} for some positive constant $c$. 
\end{cor}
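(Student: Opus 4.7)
\textbf{Proof plan for Corollary \ref{cor 3}.} The plan is to derive the statement directly from Theorem \ref{thm3}, which already packages all the hard analytic work (long-time existence of the flow \eqref{flow-n} and subsequential smooth convergence). What remains is simply to set up the data correctly and to check the evenness of the limit.

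First I would set $f(\theta):=1/\tilde{f}(\theta)$, which is smooth, positive, and even on $\mathbb{S}^{n}$ because $\tilde{f}$ is, and I would take $M_{0}$ to be any geodesic sphere centered at the origin, which is smooth, closed, uniformly convex, and origin-symmetric. Since $2<\a<n+1$ lies inside the admissible range $2<\a\le n+1$, Theorem \ref{thm3} applies. It yields a smooth uniformly convex solution $M_{t}$ of \eqref{flow-n} for all $t>0$ and a subsequence along which $M_{t_{k}}$ converges smoothly to a hypersurface $M^{n}$ satisfying \eqref{alex'} for some constant $c$; positivity of $c$ is automatic because $K$, $\phi(\rho)$, $\tilde{f}$, and the support function $u$ are all strictly positive on a uniformly convex hypersurface enclosing the origin.

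The only point not read off verbatim from Theorem \ref{thm3} is that the limit $M^{n}$ is origin-symmetric. To get this I would show that \eqref{flow-n} preserves origin-symmetry whenever $M_{0}$ and $f$ are both even: the antipodal map $\sigma$ fixes the origin of $\mathbb{H}^{n+1}$, and every ingredient on the right-hand side of \eqref{flow-n} is $\sigma$-equivariant, namely $K$ is intrinsic and hence invariant on $\sigma$-symmetric hypersurfaces, $\phi(\rho)$ depends only on the radial coordinate, $V=\sinh\rho\,\partial_{\rho}$ is $\sigma$-invariant, and $f$ is even by assumption; furthermore $\eta(t)$ is a scalar depending on integrals of $\sigma$-invariant quantities, so it is unaffected. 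Thus $\sigma(M_{t})$ solves the same initial value problem as $M_{t}$, and uniqueness of smooth solutions to the strictly parabolic flow (guaranteed on the time interval of existence by the uniform convexity estimate built into Theorem \ref{thm3}) forces $\sigma(M_{t})=M_{t}$ for every $t>0$. Origin-symmetry then passes to any smooth subsequential limit, so the $M^{n}$ produced above is origin-symmetric.

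The main (mild) obstacle is the symmetry-preservation argument in the last paragraph, since once one invokes uniqueness of the parabolic flow the rest is bookkeeping; everything nontrivial—$C^{0}$, $C^{1}$, and $C^{2}$ estimates, long-time existence, and subsequential smooth convergence—has already been absorbed into Theorem \ref{thm3}.
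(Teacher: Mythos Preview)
Your proposal is correct and matches the paper's approach: Corollary \ref{cor 3} is stated immediately after Theorem \ref{thm3} and is meant to be read as a direct consequence of it, with no separate proof given in the paper. Your added remark that the flow \eqref{flow-n} preserves origin-symmetry (because the antipodal map sends solutions to solutions and parabolic uniqueness applies) is the natural justification for the ``origin-symmetric'' clause in the conclusion, and is implicitly used in the paper's Section \ref{sec:7} estimates anyway.
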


Curvature flows in hyperbolic space have been studied extensively in recent years. In these studies, constrained flows were introduced to prove geometric inequalities, see, e.g., \cites{ACW18,Hu-Li-Wei2020,LWX14,SX19,WX14}. Convergence results for inverse curvature flows were obtained in \cites{G11,LZ19,S15,S'15,WWZ20}. Furthermore, volume preserving curvature flows in hyperbolic space have been studied, see for example \cites{AW18,M12}. All these flows in hyperbolic space have the same limiting shape in common, i.e., they all become round. When $f\equiv 1$, case $(\romannumeral1)$ in Theorem \ref{thm1} was proved by Fang Hong in \cite{F21}, in which he proved that the flow \eqref{flow-s} converges smoothly to a geodesic sphere. For the first time, we introduce the function $f$ in flows \eqref{flow-s}, \eqref{flow-n} and derive the convergence results of solutions to \eqref{alex2}, \eqref{alex'}, which build a bridge between curvature flows in hyperbolic space and solutions to the elliptic equation. By using the Klein model (see also in \cites{AW18,CH21,W19}), we project the hyperbolic flow \eqref{flow-s} to the Euclidean space and obtain the projection flow \eqref{u eu}. We discover the monotone function \eqref{Q} along \eqref{u eu} and derive the asymptotic convergence result of \eqref{flow-s}. For $\a\leq n+1$, we design the flow \eqref{flow-n} and deduce the convergence result by deriving a delicate $C^0$ estimate.

This paper is organized as follows. In Section \ref{sec:2}, we collect some properties of star-shaped hypersurfaces in hyperbolic space, derive some evolution equations of various geometric quantities along \eqref{flow-s}, \eqref{flow-n} and show that the flows can be reduced to a scalar parabolic PDE for the radial function. In Section \ref{sec:3}, we prove $C^0$, $C^1$ estimates when $\a\geq n+1$ and show that the hypersurface preserves star-shaped along the flow \eqref{flow-s}. In Section \ref{sec:4}, we obtain the uniform bound of the Gauss curvature $K$ which implies the short time existence of the flow \eqref{flow-s}. By using a new auxiliary function, we obtain the uniform bound of the principal curvatures of $M_t$ and establish the a priori estimates for the long time existence of \eqref{flow-s}. In section \ref{sec:5}, we study the asymptotic behaviour of the flow \eqref{flow-s} by projecting $M_t$ to the Euclidean space, prove the uniqueness of the solution to \eqref{alex2} and complete the proof of Theorem \ref{thm1}. In section \ref{sec:6}, we complete the proof of Theorem \ref{thm2} by deriving a delicate $C^0$ estimate. In Section \ref{sec:7}, we study the normalized flow \eqref{flow-n} under the evenness assumption when $2<\a\leq n+1$ and complete the proof of Theorem \ref{thm3}. 
%

\begin{ack}
	The authors were partially supported by NSFC grant No.11831005 and NSFC grant No.12126405. The authors would like to thank Professor Xianfeng Wang and Professor Yong Wei for helpful discussions.
\end{ack}

\section{Preliminaries}\label{sec:2}
In this paper, we fix a point $o\in\mathbb{H}^{n+1}$ and consider the polar geodesic coordinates centered at $o$ and regard $\mathbb{H}^{n+1}$ as a warped product space $[0,+\infty)\times \mathbb{S}^n$ equipped with Riemannian metric 
\begin{align*}
g_{\mathbb{H}^{n+1}}=d\rho^2+\phi(\rho)^2g_{\mathbb{S}^n}
\end{align*}
where $\phi(\rho)=\sinh\rho$ and $g_{\mathbb{S}^n}$ is the standard metric on the unit sphere $\mathbb{S}^n$. Denote 
\begin{align}
\Phi(\rho)=\int_0^{\rho}\sinh s \ \mathrm{d}s=\phi'(\rho)-1.
\end{align}
The conformal Killing vector field can be written as $V=D\Phi=\phi\partial\rho$ and $D^2\Phi=\phi'g_{\mathbb{S}^n}$. In particular, $D V=\phi'g_{\mathbb{H}^{n+1}}$.

\subsection{Hypersurfaces in hyperbolic space}
Let $M^n$ be a closed hypersurface in $\mathbb{H}^{n+1}$ and $\lbrace x^1,\cdots,x^n\rbrace$ be a local coordinate system of $M^n$. We regard $\nu$ as the unit outward normal vector field of $M^n$. We denote by the induced metric $g_{ij}=g(X_i,X_j)$ and the second fundamental form $h_{ij}=h(X_i,X_j)$ of $M^n$, where the second fundamental form is defined by $h(X,Y)=\metric{\nabla_X \nu}{Y}$ with any two tangent vector fields $X,Y\in TM^n$. The Weingarten matrix is regarded as $\mathcal{W}=\lbrace h_i{}^j\rbrace=\lbrace h_{ik}g^{kj}\rbrace$, where $\lbrace g^{ij}\rbrace$ is the inverse matrix of $\lbrace g_{ij}\rbrace$. The principal curvature $\k=(\k_1,\cdots,\k_n)$ of $M^n$ are eigenvalues of $\mathcal{W}$. Let $f(\k)$ be a symmetric function of the principal curvatures $\k = (\k_1, \k_2, \cdots, \k_n)$. There exists a function $\mathcal{F}(\mathcal{W})$ defined on the Weingarten matrix, such that $F (\mathcal{W})=f(\k)$.
Since $h_i{}^j = \sum_k h_{ik} g^{kj}$, $\mathcal{F}$ can be viewed as a function $\hat{\mathcal{F}}(h_{ij}, g_{ij})$ defined on the second fundamental form $\lbrace h_{ij}\rbrace$ and the metric $\lbrace g_{ij}\rbrace$. In the subsequent article, we denote
\begin{align*}
\dot{ \mathcal{F} }^{pq}(\mathcal{W}): =&\frac{\partial \hat{\mathcal{F}}}{\partial h_{pq}}(h_{ij}, g_{ij}),\quad
\ddot{\mathcal{F}}^{pq,rs}(\mathcal{W}): =\frac{\partial ^2 \hat{\mathcal{F}} }{\partial h_{pq}\partial h_{rs}}(h_{ij}, g_{ij}).
\end{align*}

Here we collect some formulas of hypersurface in hyperbolic space (see \cites{GL15, HL21}). 
\begin{lem} 
Let $(M^n,g)$ be a smooth hypersurface in $\mathbb{H}^{n+1}$. Then we have
\begin{align}\label{Phiij}
\nabla_i \Phi=\metric{V}{X_i},\quad \nabla_j\nabla_i\Phi=\phi'g_{ij}-uh_{ij}.
\end{align}
The support function $u=\metric{V}{\nu}$ satisfies
\begin{align}\label{uij}
\nabla_i u=\metric{V}{X_k}h_i{}^k,\quad \nabla_j\nabla_i u=\metric{V}{\nabla h_{ij}}+\phi'h_{ij}-uh_i{}^kh_{kj}
\end{align}
where $\nabla$ is the levi-Civita connection on $M^n$ with respect to the induced metric and $\lbrace X_1,\cdots,X_n\rbrace$ is a basis of the tangent space of $M^n$.
\end{lem}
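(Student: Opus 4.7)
The plan is to derive all four identities by direct tensor calculus, leaning on three ingredients already established or available: (a) the ambient gradient identity $D\Phi=V$ together with the conformal Killing property $\bar{D}_X V=\phi'X$ (equivalent to the preamble's $DV=\phi' g_{\mathbb{H}^{n+1}}$); (b) the Gauss–Weingarten formulas $\bar{D}_{X_i}X_j=\nabla_{X_i}X_j-h_{ij}\nu$ and $\bar{D}_{X_i}\nu=h_i{}^k X_k$, consistent with the convention $h(X,Y)=\metric{\nabla_X\nu}{Y}$; and (c) the Codazzi identity, which in the space form $\mathbb{H}^{n+1}$ says $\nabla h$ is fully symmetric in its three lower indices because $\bar{R}(X_i,X_j,X_k,\nu)=0$.

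The first identity $\nabla_i\Phi=\metric{V}{X_i}$ is immediate, since the tangential gradient of the restriction of $\Phi$ to $M^n$ is just the pullback of $D\Phi=V$ paired with each $X_i$. For the Hessian I would differentiate this formula tangentially: write $\nabla_j\nabla_i\Phi=X_j\metric{V}{X_i}-\metric{V}{\nabla_{X_j}X_i}$, apply the ambient Leibniz rule to the first term, and then use $\bar{D}_{X_j}V=\phi'X_j$ to get the $\phi'g_{ij}$ piece, while the Gauss formula turns the $\bar{D}_{X_j}X_i$ contribution into $\nabla_{X_j}X_i-h_{ij}\nu$; the tangential part cancels against the $\nabla_{X_j}X_i$ subtraction and the normal part yields $-u h_{ij}$, producing $\nabla_j\nabla_i\Phi=\phi'g_{ij}-u h_{ij}$.

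For the support function $u=\metric{V}{\nu}$ I would compute $\nabla_i u=\metric{\bar{D}_{X_i}V}{\nu}+\metric{V}{\bar{D}_{X_i}\nu}$. The conformal Killing property makes the first term $\phi'\metric{X_i}{\nu}=0$, and the Weingarten formula turns the second into $\metric{V}{X_k}h_i{}^k$, which is the claimed first-order identity. Finally, writing this as $\nabla_i u=h_i{}^k \nabla_k\Phi$ and differentiating once more, I would use the product rule together with the Hessian of $\Phi$ just established:
\begin{align*}
\nabla_j\nabla_i u=(\nabla_j h_i{}^k)\nabla_k\Phi+h_i{}^k(\phi'g_{jk}-u h_{jk})=(\nabla_j h_i{}^k)\metric{V}{X_k}+\phi'h_{ij}-u h_i{}^k h_{kj}.
\end{align*}
Codazzi converts $\nabla_j h_i{}^k=g^{kl}\nabla_j h_{il}=g^{kl}\nabla_l h_{ij}$, after which the contraction with $\metric{V}{X_k}$ is precisely the directional derivative of $h_{ij}$ along the tangential part of $V$, i.e.\ $\metric{V}{\nabla h_{ij}}$.

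No serious obstacle is expected; the only subtlety is keeping the sign convention of $h_{ij}$ aligned with $h(X,Y)=\metric{\nabla_X\nu}{Y}$ throughout and invoking Codazzi in the ambient-curvature-free form that is particular to space forms. Once these bookkeeping items are fixed, the four formulas drop out in a single page of computation.
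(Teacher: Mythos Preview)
Your proposal is correct and complete. The paper itself does not prove this lemma at all; it simply states the formulas and cites \cite{GL15} and \cite{HL21} for them. Your direct computation via the conformal Killing identity $\bar{D}_XV=\phi'X$, the Gauss--Weingarten formulas in the stated sign convention, and Codazzi is exactly the standard argument one finds in those references, so there is nothing to compare.
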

Then we have the first and second derivatives of the distance function $\rho$.
\begin{cor}
\begin{align}\label{rhoij}
\nabla_i\rho=\frac{\metric{V}{X_i}}{\phi},\quad \nabla_j\nabla_i\rho=\frac{\phi'}{\phi}(g_{ij}-\nabla_j\rho\nabla_i\rho)-\frac{u h_{ij}}{\phi}.
\end{align}
\end{cor}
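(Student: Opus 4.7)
The plan is to derive both formulas directly from the preceding Lemma (equation \eqref{Phiij}) by applying the chain rule, using the relation $\Phi'(\rho)=\phi(\rho)$ that follows immediately from the definition $\Phi(\rho)=\int_0^\rho \sinh s\,\mathrm{d}s$.

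For the first identity, I would differentiate $\Phi$ along a tangent direction $X_i$ via the chain rule, obtaining $\nabla_i \Phi=\Phi'(\rho)\nabla_i \rho=\phi\,\nabla_i \rho$. Comparing with $\nabla_i\Phi=\metric{V}{X_i}$ from \eqref{Phiij} and dividing by $\phi>0$ gives $\nabla_i\rho=\metric{V}{X_i}/\phi$.

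For the Hessian, I would apply $\nabla_j$ to the already-derived expression $\nabla_i\Phi=\phi(\rho)\nabla_i\rho$. Using the product and chain rules,
\begin{align*}
\nabla_j\nabla_i\Phi=\phi'(\rho)\,\nabla_j\rho\,\nabla_i\rho+\phi(\rho)\,\nabla_j\nabla_i\rho.
\end{align*}
Substituting the second identity of \eqref{Phiij}, namely $\nabla_j\nabla_i\Phi=\phi'g_{ij}-uh_{ij}$, and solving for $\nabla_j\nabla_i\rho$ yields
\begin{align*}
\nabla_j\nabla_i\rho=\frac{\phi'}{\phi}\bigl(g_{ij}-\nabla_j\rho\,\nabla_i\rho\bigr)-\frac{u h_{ij}}{\phi},
\end{align*}
which is the desired formula.

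There is no real obstacle here: the corollary is a formal consequence of \eqref{Phiij} combined with the identity $\Phi'=\phi$, and the only subtlety is keeping track of the tangential vs.\ normal decomposition implicitly already handled in the lemma. No curvature terms of the ambient space appear explicitly because they have been absorbed into the Hessian of $\Phi$ via the warped-product structure.
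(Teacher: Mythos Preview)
Your proposal is correct and follows exactly the same route as the paper: observe $\nabla_i\Phi=\phi\nabla_i\rho$ and $\nabla_j\nabla_i\Phi=\phi\nabla_j\nabla_i\rho+\phi'\nabla_j\rho\nabla_i\rho$, then combine with \eqref{Phiij}. The paper states this in one line and leaves the algebra to the reader, while you have spelled it out.
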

\begin{proof}
Observe that
\begin{align*}
\nabla_i \Phi=\phi\nabla_i\rho,\quad \nabla_j\nabla_i\Phi=\phi\nabla_j\nabla_i\rho+\phi'\nabla_j\rho\nabla_i\rho.
\end{align*}
Combing with \eqref{Phiij} we get \eqref{rhoij} by a direct calculation.
\end{proof}

%
%

\subsection{Evolution equations}
 For convenience, we consider the following flow
\begin{equation}\label{s2:flow-entire}
\frac{\partial}{\partial t}X(x,t)= -\Theta \nu(x,t)+ \tilde{\eta}(t)V
\end{equation}
where $\Theta=\phi(\rho)^{\a}f(\theta)K$ and the global term $\tilde{\eta}(t)$ is a function of time $t$. For $\tilde{\eta}(t)\equiv 1$, \eqref{s2:flow-entire} is the flow \eqref{flow-s}; for $\tilde{\eta}(t)=\eta(t)$, \eqref{s2:flow-entire} is the flow \eqref{flow-n}.

	
%

\begin{lem}
	Along the flow \eqref{s2:flow-entire}, we have the following evolution equations (also see \cites{WWZ20, F21}). The induced metric evolves by 
\begin{align}\label{g ev}
\frac{\partial}{\partial t}g_{ij}=&-2\Theta h_{ij}+2 \phi' \tilde{\eta}(t) g_{ij}.
\end{align}
The support function evolves by
\begin{align}\label{u ev}
\frac{\partial}{\partial  t}u=&-\phi'\Theta+\phi'\tilde{\eta}(t) u+\langle V,\nabla \Theta \rangle.
\end{align}
The second fundamental form evolves by
\begin{align}\label{evolution h}
\frac{\partial}{\partial t} h_i{}^j=\nabla_i\nabla^j \Theta+\Theta h_i{}^kh_k{}^j-\tilde{\eta}(t)\phi'h_i{}^j+(\tilde{\eta}(t)u-\Theta)\delta_i{}^j
\end{align}
where $\nabla$ is the Levi-Civita connection of the induced metric on $M_t$.	
\end{lem}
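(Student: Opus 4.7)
The plan is to derive each identity by differentiating the defining expression and systematically invoking two structural facts about the ambient geometry: the conformal Killing identity $\overline{\nabla} V = \phi' \, g_{\mathbb{H}^{n+1}}$ for $V = \phi \, \partial_\rho$ and the Weingarten relation $\overline{\nabla}_{X_i} \nu = h_i{}^k X_k$. These, combined with \eqref{Phiij} and \eqref{uij}, handle all ambient-to-intrinsic conversions required.

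For \eqref{g ev} I would commute $\partial_t$ with $\partial_i$ to compute
\begin{align*}
\overline{\nabla}_{X_i}(\partial_t X) = \overline{\nabla}_{X_i}(-\Theta \nu + \tilde{\eta} V) = -\nabla_i \Theta \, \nu - \Theta \, h_i{}^k X_k + \tilde{\eta} \phi' \, X_i,
\end{align*}
then pair with $X_j$ and symmetrize, the normal term dropping out. For \eqref{u ev} the key intermediate identity is $\partial_t \nu = \nabla \Theta$, obtained by differentiating $\metric{\nu}{X_i} = 0$ together with $|\nu|^2 \equiv 1$ and inserting the expression for $\partial_t X_i$ above. One then expands
\begin{align*}
\partial_t u = \metric{\overline{\nabla}_{\partial_t X} V}{\nu} + \metric{V}{\partial_t \nu} = \phi' \metric{\partial_t X}{\nu} + \metric{V}{\nabla \Theta},
\end{align*}
and substitutes $\metric{\partial_t X}{\nu} = -\Theta + \tilde{\eta} u$.

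For \eqref{evolution h} I would split the flow into its normal piece $F\nu$ with $F = -\Theta + \tilde{\eta} u$ and its tangential piece $\tilde{\eta} V^T$ where $V^T = V - u \nu$. The standard evolution formula for $h_i{}^j$ under a purely normal flow in a space form of constant sectional curvature $-1$ yields an expression involving $\nabla_i \nabla^j F$, a quadratic term in $F$ and $h_i{}^k h_k{}^j$ (accounting for both the Simons-type commutator and the time derivative of $g^{ij}$), and a background-curvature term proportional to $F \delta_i{}^j$. The tangential reparametrization contributes $\tilde{\eta} L_{V^T} h_i{}^j$; computing $\nabla_i (V^T)^k = \phi' \delta_i{}^k - u \, h_i{}^k$ from $\overline{\nabla} V = \phi' g$ shows that the algebraic pieces of the Lie derivative cancel in pairs and only the transport $\tilde{\eta} \metric{V}{\nabla h_i{}^j}$ survives. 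Expanding $\nabla_i \nabla^j F$ via the raised Hessian identity $\nabla_i \nabla^j u = \metric{V}{\nabla h_i{}^j} + \phi' h_i{}^j - u \, h_i{}^k h_k{}^j$ from \eqref{uij} produces an opposite transport term, the two cancel exactly, and the remaining contributions organize into \eqref{evolution h}.

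The main obstacle is the Weingarten step: the signs in the normal-flow formula depend on the convention for $h_{ij}$ and on whether one tracks $h_{ij}$ or $h_i{}^j$ (the latter acquires an extra contribution from $\partial_t g^{ij} = -2F h^{ij}$), and the precise cancellation of transport terms between the tangential Lie derivative of $V^T$ and the Hessian identity for $u$ must be verified term by term. Once $\overline{\nabla} V = \phi' g$ is in hand, the evolutions of $g_{ij}$ and $u$ are essentially routine.
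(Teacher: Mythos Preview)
Your treatment of $\partial_t g_{ij}$, $\partial_t\nu$, and $\partial_t u$ coincides with the paper's: both differentiate the defining inner products directly and feed in $\overline{\nabla}V=\phi'\,g_{\mathbb{H}^{n+1}}$ and the Weingarten relation.

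The genuine divergence is in the Weingarten evolution. The paper does \emph{not} decompose the flow into a normal speed $F=-\Theta+\tilde\eta u$ plus a tangential drift $\tilde\eta V^T$. Instead it keeps the velocity in the form $-\Theta\nu+\tilde\eta V$, differentiates $h_{ij}=-\metric{D_{\partial_iX}\partial_jX}{\nu}$ directly, and handles the commutation of $\partial_t$ with $D_{\partial_iX}$ via the ambient curvature tensor $R^{\mathbb{H}^{n+1}}$; the $(\tilde\eta u-\Theta)\delta_i{}^j$ term then drops out of the sectional-curvature contribution in one line, and $\partial_t g^{ij}$ is appended to pass from $h_{ij}$ to $h_i{}^j$. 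There is no appeal to \eqref{uij} and no transport-term cancellation to chase. Your route is equally valid and arguably more structural---it recycles the standard normal-flow formula and isolates the reparametrization as a Lie derivative---but it trades one ambient-curvature computation for a two-step cancellation (the $\tilde\eta\metric{V}{\nabla h_i{}^j}$ from $L_{V^T}h$ against the same term coming out of $\nabla_i\nabla^j u$). The paper's direct computation is shorter here precisely because $V$ is conformal Killing, so the ambient term $\metric{D_iD_j(\tilde\eta V)}{\nu}$ collapses immediately to $\tilde\eta\phi'h_{ij}$ without ever splitting $V$.
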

\begin{proof}
By a direct calculation, we have
\begin{align*}
\frac{\partial}{\partial t}g_{ij}=&{\partial_t} \metric{\partial_i X}{\partial_j X}\\
=&\metric{D_i\f-\Theta\nu+ \tilde{\eta}(t) V\r}{\partial_jX} +\metric{ \partial_iX}{D_j\f-\Theta\nu+ \tilde{\eta}(t) V\r} \\
=&-\Theta\f \langle D_i\nu,\partial_jX \rangle +\langle \partial_i X,D_j\nu \rangle \r+2 \tilde{\eta}(t) \phi'g_{ij}\\
=&-2\Theta h_{ij}+2 \tilde{\eta}(t)\phi' g_{ij}.
\end{align*}
Since $\partial_t \nu$ is tangential,
\begin{equation} \label{ev-nu}
\begin{aligned}
\frac{\partial}{\partial t} \nu=& \langle \partial_t\nu,\partial_jX \rangle g^{il}\partial_l X\\
=&-\langle \nu,\partial_j\f-\Theta\nu+ \tilde{\eta}(t) V\r \rangle g^{jl}\partial_l X\\
=& \partial_j \Theta g^{jl}\partial_l X
=\nabla\Theta.
\end{aligned}
\end{equation}
Using \eqref{ev-nu}, we obtain the evolution of the support function $u$ as follows:
\begin{align*}
\frac{\partial}{\partial t}u=&\partial_t\metric{ V}{\nu}
=\metric{ -\phi'\Theta\nu+  \phi'\tilde{\eta}(t)V}{\nu }+\metric{V}{\nabla\Theta}\\
=&-\phi'\Theta+  \phi'\tilde{\eta}(t)u+\metric{V}{\nabla\Theta}.
\end{align*}
Now we calculate the evolution of $h_{ij}$
\begin{align*}
\frac{\partial}{\partial t} h_{ij}=&-\partial_t \metric{ D_{\partial_i X} \partial_j X}{\nu} \\
=&- \metric{D_{\partial_i X}D_{\partial_j X}(-\Theta\nu+ \tilde{\eta}(t) V)}{\nu}-R^{\mathbb{H}^{n+1}}(\partial_i X, \partial_t X,\partial_j X,\nu)- \metric{D_{\partial_i X} \partial_j X}{\nabla\Theta}\\
=&\partial_i\partial_j\Theta-\Theta (h^2)_{ij}+\tilde{\eta}(t)\phi'h_{ij}+(\tilde{\eta}(t)u-\Theta) g_{ij}-\metric{\nabla_{\partial_i X} \partial_j X}{\nabla\Theta}\\
=&\nabla_i\nabla_j\Theta-\Theta  h_i{}^kh_{kj}+ \tilde{\eta}(t)\phi'h_{ij}+(\tilde{\eta}(t)u-\Theta) g_{ij}.
\end{align*}
From \eqref{g ev}, we have
\begin{align*}
\frac{\partial}{\partial t} g^{ij}=-g^{il}\f \partial_t g_{lm} \r g^{mj}=2\Theta h^{ij}-2 \tilde{\eta}(t)\phi'g^{ij}.
\end{align*}
Thus
\begin{equation}
\begin{aligned}\label{h_i^j}
\frac{\partial}{\partial t} h_i{}^j=&\partial_t h_{il}g^{lj}+h_{il}\partial_t g^{lj}\\
=&\nabla_i\nabla^j\Theta+\Theta h_i{}^kh_k{}^j- \tilde{\eta}(t)\phi' h_i{}^j+(\tilde{\eta}(t)u-\Theta)\delta_i{}^j.
\end{aligned}
\end{equation}
\end{proof}
\subsection{Parametrization by radial graph}\label{subsec 2}

For a closed star-shaped hypersurface $M^n \subset \mathbb{H}^{n+1}$, we can parametrize it as a graph of the radial function $\rho(\theta): \mathbb{S}^n\to \mathbb{R}$, i.e.,
	\begin{align*}
	M^n = \lbrace (\rho(\theta), \theta):\rho:\mathbb{S}^n\rightarrow\mathbb{R}^+,\quad \theta \in \mathbb{S}^n \rbrace
	\end{align*}
	where $\theta=(\theta^1,\cdots,\theta^n)$ is a local normal coordinate system of $\mathbb{S}^n$ and $\rho$ is a smooth function on $\mathbb{S}^n$. 
	Let $f_i = \overline{\nabla}_i f$, $f_{ij} = \overline{\nabla}^2_{ij} f$, where $\overline{\nabla}$ is the Levi-Civita connection on $\mathbb{S}^n$ with respect to the standard metric $g_{\mathbb{S}^n}$.\\
The tangent space of $M^n$ is spanned by (also see \cite{CLW18})
\begin{align}
X_i=\rho_i\partial_{\rho}+\partial_{\theta_i}
\end{align}
and the unit outward normal vector is 
\begin{align*}
\nu=\frac{\partial_{\rho}-\frac{\rho^i\partial_{\theta_i}}{\phi^2}}{w},
\end{align*}
where we set 
\begin{align}\label{w}
w=\sqrt{1+\frac{|\overline{\nabla} \rho|^2}{\phi^2}}.
\end{align}
Then the support function and the induced metric can be expressed as 
\begin{align}\label{u}
u=\frac{\phi^2}{\sqrt{\phi^2+|\overline{\nabla} \rho|^2}},
\end{align}
\begin{align}\label{gij}
g_{ij}=\phi^2\delta_{ij}+\rho_i\rho_j,\quad g^{ij}=\frac{1}{\phi^2}(\delta^{ij}-\frac{\rho_i\rho_j}{\phi^2+|\overline{\nabla} \rho|^2}).
\end{align}
The second fundamental form is given by
\begin{align}\label{hi_j}
h_{ij}=\frac{-\phi \rho_{ij}+2\phi'\rho_i\rho_j+\phi^2\phi'\delta_{ij}}{\sqrt{\phi^2+|\overline{\nabla} \rho|^2}}
\end{align}
and we have the Weingarten matrix
\begin{align}\label{hij}
h_i{}^j=\frac{1}{\phi^2\sqrt{\phi^2+|\overline{\nabla} \rho|^2}}\f\delta^{jk}-\frac{\rho_j\rho_k}{\phi^2+|\overline{\nabla} \rho|^2}\r\f-\phi \rho_{ki}+2\phi'\rho_k\rho_i+\phi^2\phi'\delta_{ki}\r.
\end{align}
Similar to \cite[p. 901]{LSW20a}, the flow \eqref{s2:flow-entire} can be written as a scalar parabolic PDE for the radial function 
	\begin{equation}\label{s1:flow-rn}
	\left\{\begin{aligned}
	\frac{\partial}{\partial t}\rho(\theta,t)=&-\phi(\rho)^{\a}f(\theta)wK +\tilde{\eta}(t)\phi(\theta,t),\quad \text{for }(\theta,t)\in\mathbb{S}^n\times[0,+\infty),\\
	\rho(\cdot,0)=& \rho_0(\cdot),
	\end{aligned}\right.
	\end{equation}
	where $w$ is the function defined in \eqref{w}.

\section{ $C^0$ and $C^1$ estimates}\label{sec:3}
In this section, we establish the $C^0$ and $C^1$ estimates of the flow \eqref{flow-s} for the proof of Theorem \ref{thm1}. Especially, we show that the flow hypersurface $M_t$ preserves star-shaped along \eqref{flow-s}.
\subsection{ $C^0$ estimate}\label{sec:3.1}
In this subsection, we will show that the radial function $\rho$ of \eqref{flow-s} has uniform bound under the assumption of Theorem \ref{thm1}.
\begin{lem}\label{C0-est}
	Let $\rho(\cdot, t)$ be a smooth, positive, uniformly convex solution to \eqref{s1:flow-rn} on $\mathbb{S}^n \times [0,T)$ provided $\tilde{\eta}(t)\equiv 1$. If $\a > n+1$, or $\a = n+1$ with $f<1$, then there is a positive constant $C$ depending only on $n$, $\max f$, $\inf f$ and the initial hypersurface $M_0$, such that
	\begin{align*}
	\frac{1}{C} \leq \rho(\cdot, t) \leq C, \quad \forall t \in [0,T).
	\end{align*}
\end{lem}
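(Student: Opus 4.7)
The plan is to apply the parabolic maximum principle directly to the scalar equation \eqref{s1:flow-rn} with $\tilde{\eta}\equiv 1$. Writing $\rho_{\max}(t)=\max_{\mathbb{S}^n}\rho(\cdot,t)$ and $\rho_{\min}(t)=\min_{\mathbb{S}^n}\rho(\cdot,t)$, the idea is to derive ODE differential inequalities for each by exploiting the pointwise formula \eqref{hij} for the Weingarten matrix at spatial extrema.

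At any spatial maximum $\theta_\ast$ of $\rho(\cdot,t)$, one has $\bar{\nabla}\rho(\theta_\ast,t)=0$ and $\bar{\nabla}^2\rho(\theta_\ast,t)\leq 0$, and in particular $w=1$ by \eqref{w}. Substituting these into \eqref{hij} collapses the Weingarten matrix to
$$h_i{}^j=-\frac{\rho_{ij}}{\phi^2}+\frac{\phi'}{\phi}\delta_i{}^j,$$
whose eigenvalues are bounded below by $\phi'/\phi$, so $K=\det(h_i{}^j)\geq(\phi'/\phi)^n$. Plugging this back into \eqref{s1:flow-rn} produces
$$\frac{d}{dt}\rho_{\max}(t)\;\leq\;\sinh\rho_{\max}\bigl[\,1-f(\theta_\ast)\cosh^n\rho_{\max}\,\sinh^{\alpha-n-1}\rho_{\max}\,\bigr].$$
Under either hypothesis the bracket eventually becomes negative as $\rho_{\max}$ grows: in case (i) because $\sinh^{\alpha-n-1}\rho\,\cosh^n\rho\to\infty$; in case (ii), since $f<1$ on the compact sphere $\mathbb{S}^n$ forces $\max f<1$, the growing factor $\cosh^n\rho$ eventually exceeds $1/\max f$. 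A standard ODE comparison then gives $\rho_{\max}(t)\leq\max\{\rho_{\max}(0),R_+\}$, with $R_+$ depending only on $\max f$, $n$, and $\alpha$.

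The lower bound is entirely symmetric: at a spatial minimum $\bar{\nabla}^2\rho\geq 0$ reverses the inequality on $K$ to $K\leq(\phi'/\phi)^n$, giving
$$\frac{d}{dt}\rho_{\min}(t)\;\geq\;\sinh\rho_{\min}\bigl[\,1-f(\theta_\ast)\cosh^n\rho_{\min}\,\sinh^{\alpha-n-1}\rho_{\min}\,\bigr].$$
The small-$\rho$ limit of the bracket is $1$ when $\alpha>n+1$ (since $\sinh^{\alpha-n-1}\rho\to 0$), and $1-f(\theta_\ast)\geq 1-\max f>0$ when $\alpha=n+1$ with $f<1$. Hence there exists $R_->0$, depending only on $\max f$, $n$, $\alpha$, such that the bracket is positive whenever $\rho_{\min}\leq R_-$, and ODE comparison yields $\rho_{\min}(t)\geq\min\{\rho_{\min}(0),R_-\}$.

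The entire argument reduces to an ODE comparison once $K$ has been controlled at the extrema, and no substantive obstacle is expected. The role of the borderline condition $f<1$ in case (ii) is precisely to cover the degenerate situation in which the drift term $\sinh\rho$ and the curvature term $\phi^\alpha K$ scale identically both as $\rho\to 0$ and as $\rho\to\infty$; without the strict inequality $\max f<1$ one could not guarantee a definite sign for the bracket in either regime.
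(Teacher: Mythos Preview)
Your proof is correct and follows essentially the same approach as the paper: apply the maximum principle to the scalar equation, use \eqref{hij} at spatial extrema to bound $K$ by $(\phi'/\phi)^n$ from the appropriate side, and run an ODE comparison. One small slip in bookkeeping: the threshold $R_+$ for the upper bound depends on $\inf f$, not $\max f$ (you need $f(\theta_\ast)\cosh^n\rho\,\sinh^{\alpha-n-1}\rho>1$ for \emph{every} possible maximum point, so the worst case is $f(\theta_\ast)=\inf f$), and the hypothesis $f<1$ is actually irrelevant to the upper bound---it is needed only for the lower bound, exactly as you correctly argue there.
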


\begin{proof} 
Fix time $t$ and suppose that $\rho$ attains its maximum at point $(p_0,t)$. At $(p_0,t)$, we have $|\overline{\nabla} \rho|=0$ and $\rho_{ij}\leq 0$. From \eqref{hij},
\begin{align}\label{c01}
h_i{}^j\geq\frac{\phi'}{\phi}\delta_i{}^j.
\end{align}
Inserting \eqref{c01} into \eqref{s1:flow-rn}, we obtain 
\begin{align}\label{c0 upp}
\partial_t\rho\leq -\phi^{\a-n}\phi'^nf+\phi=\phi(-\phi^{\a-n-1}\phi'^nf+1)\leq \phi(-\phi^{\a-1}f+1),
\end{align}
where we use the fact $\frac{\phi'}{\phi}\geq 1$.
Since $\a\geq n+1$, if $\phi\leq (\min f)^{-\frac{1}{\a-1}}$ for all $t\geq 0$, then we obtain the uniform upper bound of $\rho$. If there is some $t_0$, such that $\phi>(\min f)^{-\frac{1}{\a-1}}$,  then we obtain $\rho\leq \max_{\mathbb{S}^n}\rho(\cdot,0)$ by \eqref{c0 upp}. Thus, $\rho$ obtains the uniform upper bound with respect to the positive lower bound of $f$ on $\mathbb{S}^n$ and the initial hypersurface.

Suppose $\rho$ attains its spatial minimum at point $(q_0,t)$. Similarly, at $(q_0,t)$, we have
\begin{align}
h_i{}^j\leq\frac{\phi'}{\phi}\delta_i{}^j
\end{align} 
and
\begin{align}\label{c0 low}
\partial_t\rho\geq -\phi^{\a-n}\phi'^nf+\phi=\phi(-\phi^{\a-n-1}\phi'^nf+1).
\end{align}
When $\a>n+1$, $\phi^{\a-n-1}\phi'^n=(\sinh\rho)^{\a-n-1}(\cosh\rho)^n\rightarrow 0$ as $\rho\rightarrow 0$. When $\a=n+1$, $\phi^{\a-n-1}\phi'^n=(\cosh\rho)^n\rightarrow 1$ as $\rho\rightarrow 0$. Hence if $\phi^{\a-n-1}\phi'^n\geq \frac{1}{\max f}$ for all $t\geq 0$, then we obtain the uniform lower bound of $\rho$ provided $\a>n+1$ or $\a=n+1$ with the assumption $f<1$. If there is some $t_0$, such that $\phi^{\a-n-1}\phi'^n<\frac{1}{\max f}$, we obtain $\rho\geq \min_{\mathbb{S}^n}\rho(\cdot,0)$ by \eqref{c0 low}. 
Thus $\rho$ has the uniform lower bound depending on $\min f$ and the initial hypersurface.
	\end{proof}
	
\subsection{ $C^1$ estimate}
In this subsection, we derive a uniform upper bound of the gradient of $\rho$ by using the approach in \cites{G06,WWZ20}.
\begin{lem}\label{c1}
Let $\rho(\cdot, t)$ be a smooth, positive, uniformly convex solution to \eqref{s1:flow-rn} on $\mathbb{S}^n \times [0,T)$. Based on the results of Lemma \ref{C0-est}, we have
\begin{equation*}
|\overline{\nabla}\rho|\leq C,
\end{equation*}
where $C$ only depends on the uniform upper and lower bounds of $\rho$.
\end{lem}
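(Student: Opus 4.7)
Since $\rho$ is uniformly bounded by Lemma~\ref{C0-est}, the warping factor $\phi(\rho)=\sinh\rho$ and its derivatives $\phi'(\rho)$ are two-sided bounded on the range of $\rho$, so it suffices to control $|\overline{\nabla}\rho|$ via the parabolic maximum principle on $\mathbb{S}^n\times[0,T)$. By \eqref{u}, this is equivalent to bounding $w=\sqrt{1+|\overline{\nabla}\rho|^2/\phi^2}$ from above, or equivalently the support function $u=\phi/w$ from below.

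My plan is to apply the maximum principle to the auxiliary function
\[
\Psi(\theta,t) \;=\; \tfrac12|\overline{\nabla}\gamma|^2, \qquad \gamma(\rho) \;=\; \int \phi(\rho)^{-1}\,d\rho,
\]
so that $\overline{\nabla}\gamma=\overline{\nabla}\rho/\phi$ and $2\Psi=w^2-1$. A bound on $\Psi$ is equivalent to the desired $C^1$ estimate. Applying \eqref{s1:flow-rn} with $\tilde\eta\equiv 1$ gives the clean evolution
\[
\partial_t\gamma \;=\; -\phi^{\alpha-1} f w K + 1,
\]
from which $\partial_t\Psi = \overline{\nabla}_i\gamma\,\overline{\nabla}^i(\partial_t\gamma)$. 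The resulting formula splits into three contributions: (i) bounded lower-order terms from differentiating $\phi$ on the range of $\rho$ and from $\overline{\nabla} f$; (ii) a term in $\overline{\nabla} w$ carrying second derivatives of $\rho$; and (iii) a term in $\overline{\nabla} K$ carrying third derivatives of $\rho$ through the Weingarten relation \eqref{hij}.

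At a positive spatial maximum $\theta_0$ of $\Psi$ at a fixed time $t$, the first-order condition $\overline{\nabla}_i\Psi(\theta_0,t)=0$ forces an algebraic relation between $\overline{\nabla}^2\rho$ and $\overline{\nabla}\rho$, while $\overline{\nabla}^2\Psi(\theta_0,t)\leq 0$ produces the expected sign on the elliptic piece. Substituting these into the expression for $\partial_t\Psi$ and invoking the Codazzi equations to rewrite the $\overline{\nabla} K$ contribution in terms of $\dot{\mathcal{F}}^{ij}h_{ij,k}$, the genuine third-derivative terms are either eliminated by the critical-point identity or absorbed into the negative term from $\overline{\nabla}^2\Psi\leq 0$. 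One then arrives at a pointwise inequality of the form
\[
\partial_t\Psi\big|_{(\theta_0,t)} \;\leq\; -c\,\phi^{\alpha-1} fKw\,\Psi + C\,(1+\Psi^{1/2}),
\]
with constants $c,C>0$ depending only on the uniform bounds of $\rho$ and on $\|f\|_{C^1(\mathbb{S}^n)}$. Uniform convexity of $M_t$ keeps $K$ bounded away from zero, so the linear term dominates once $\Psi$ is large, and a standard ODE comparison yields $\Psi\leq C'$ uniformly in $t$.

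The principal obstacle is the $\overline{\nabla} K$ contribution: because $K$ depends on $\overline{\nabla}^2\rho$ through \eqref{hij}, direct differentiation brings in third derivatives of $\rho$ that are not immediately controlled by $\overline{\nabla}^2\Psi\leq 0$. Their cancellation rests on combining the critical-point identity with the Codazzi equations applied to the hypersurface $M_t$, and this is the delicate step hiding behind the brief statement of the lemma; the computation follows the pattern developed in \cite{G06} and \cite{WWZ20}, suitably adapted to the ambient curvature of $\mathbb{H}^{n+1}$ and to the presence of the anisotropic factor $\phi^\alpha f$.
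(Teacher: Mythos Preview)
Your parabolic strategy is fundamentally different from the paper's, and it contains a genuine circularity. The paper's proof is purely \emph{spatial}: at each fixed time $t$ it considers $Q=\log w+\beta\rho$ with $\beta<0$ a constant depending only on $\rho_{\min}$, and uses only the first-order condition $\overline\nabla Q=0$ at a spatial maximum together with the positivity of $h_i{}^j$ (uniform convexity). From \eqref{h3} this critical condition rearranges to $(\beta+\phi'/\phi)\,|\overline\nabla\rho|^2=w\,h_i{}^j\rho_j\rho^i\geq 0$, which forces $\overline\nabla\rho=0$ at the maximum and hence $Q_{\max}\leq \beta\rho_{\min}$. The evolution equation \eqref{s1:flow-rn} is never used; this is the content of the Remark following the lemma: for convex hypersurfaces in $\mathbb H^{n+1}$, $C^0$ bounds on $\rho$ alone imply a $C^1$ bound.

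Your plan instead differentiates the flow, producing a $\overline\nabla K$ term, and then closes with an inequality of the form $\partial_t\Psi\leq -c\,\phi^{\alpha-1}fKw\,\Psi+C(1+\Psi^{1/2})$. To extract a bound you need $K\geq c_0>0$ \emph{uniformly in $t$}; you justify this by ``uniform convexity of $M_t$'', but uniform convexity only gives $K(\cdot,t)>0$ on each slice, not a time-independent lower bound. In the paper's logical order the uniform lower bound $K\geq c$ is Lemma~\ref{sk low}, whose proof (via the minimum of $\Theta$) invokes Corollary~\ref{cor u}, i.e.\ the lower bound on $u$, which in turn rests on the present $C^1$ estimate. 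So your closing step is circular. A secondary issue is that the cancellation of the $\overline\nabla K$ contribution is only asserted (``follows the pattern in \cite{G06,WWZ20}'') rather than carried out; but even if that computation goes through, the argument still stalls at the missing uniform $K$-lower bound. The fix is to abandon the time derivative entirely and use the paper's elliptic argument, which needs only $h_i{}^j>0$ and the $C^0$ bounds.
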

\begin{proof}
Fix some time $t$ and consider the auxiliary function $Q=\log w+\b\rho$, where we assume $\b=-2\tanh(\rho_{\min})$ from Lemma \ref{C0-est}. At the maximal point of $Q$, we have $\overline{\nabla} Q=0$, then 
\begin{align}\label{c1 cri}
\frac{\gamma_{li}\gamma_l}{w^2}+\b\rho_i=0,
\end{align}
where we regard $\gamma(\rho)=\log(1-\frac{2}{e^{\rho}+1})$ and $\frac{d\gamma}{d\rho}=\frac{1}{\phi}$.
Then \eqref{w}, \eqref{gij}, \eqref{hi_j}, \eqref{hij} becomes 
\begin{align}\label{g2}
g_{ij}=\phi^2(\delta_{ij}+\gamma_i \gamma_j),\quad g^{ij}=\frac{1}{\phi^2}(\delta^{ij}-\frac{\gamma_i \gamma_j}{w^2}),
\end{align}
\begin{align}\label{h2}
w=\sqrt{1+|\overline{\nabla}\gamma|^2},\quad h_{ij}=\frac{\phi'}{\phi w}g_{ij}-\frac{\phi}{w}\gamma_{ij}.
\end{align}
The Weingarten matrix turns into
\begin{align}\label{h3}
h_i{}^j=h_{il}g^{lj}=\frac{\phi'}{\phi w}\delta_i{}^j-\frac{\phi}{w}\gamma_{il}g^{lj}.
\end{align}
Inserting \eqref{h3} into \eqref{c1 cri} and multiplying $w^2\rho^i$ on the both sides of \eqref{c1 cri}, we obtain
\begin{align}\label{c1 cr}
\frac{w}{\phi}(\frac{\phi'}{\phi w}\delta_i{}^j-h_i{}^j)g_{lj}\gamma_l\rho^i+\b w^2|\overline{\nabla}\rho|^2=0.
\end{align}
By a direct calculation, from \eqref{g2} and \eqref{h2}, we have
\begin{align}\label{c1 gp}
g_{lj}\gamma_l=\phi\rho_j+|\overline{\nabla}\rho|^2\frac{\rho_j}{\phi}=\phi w^2\rho_j.
\end{align}
This together with \eqref{c1 cr} implies that
\begin{align}
(\b+\frac{\phi'}{\phi})|\overline{\nabla}\rho|^2=wh_i{}^j\rho_j\rho^i.
\end{align}
Since $\lbrace h_i{}^j\rbrace$ is positive-definite and $\b+\frac{\phi'}{\phi}<0$, we obtain $\overline{\nabla}\rho=0$ at the maximal point of $Q$. Thus $ Q_{\max}\leq \b\rho_{\min}<0$. And 
\begin{align*}
|\overline{\nabla} \rho|\leq \sinh(\rho
_{\max})\sqrt{e^{4\rho_{\max}\tanh(\rho_{min})}-1}.
\end{align*}
\end{proof}
\begin{rem}
When the hypersurface is uniformly convex in hyperbolic space, the gradient estimate of $\rho$ follows from the uniform upper and lower bounds of the radial function $\rho$.
\end{rem}
\begin{cor}\label{cor u}
	Based on the results of Lemma \ref{C0-est}, along the flow \eqref{flow-s}, the hypersurface $M_t$ preserves star-shaped and the support function $u$ satisfies
	\begin{align*}
	\frac{1}{C} \leq u \leq C,\quad \forall t\in[0,T).
	\end{align*}
	for some constant $C >0$, where $C$ only depends on $C_0$ estimate.
\end{cor}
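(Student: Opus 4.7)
The plan is to obtain both conclusions directly from the explicit formula \eqref{u} for the support function, namely
\begin{equation*}
u=\frac{\phi(\rho)^2}{\sqrt{\phi(\rho)^2+|\overline{\nabla}\rho|^2}},
\end{equation*}
combined with the a priori bounds proved in Lemma \ref{C0-est} and Lemma \ref{c1}. There is no new analytic input needed; this corollary is essentially a rewording of the $C^0$ and $C^1$ estimates in terms of $u$.

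First I would verify that $M_t$ remains star-shaped throughout $[0,T)$. As recalled in Subsection \ref{subsec 2}, the flow \eqref{flow-s} is equivalent to the scalar parabolic PDE \eqref{s1:flow-rn} for the radial function $\rho(\theta,t)$ on $\mathbb{S}^n$. Lemma \ref{C0-est} shows that $\rho(\cdot,t)\geq 1/C>0$ uniformly in $t$, so $M_t$ admits a valid radial graph representation over $\mathbb{S}^n$ for all $t\in[0,T)$; this is exactly the statement that $M_t$ stays star-shaped with respect to the origin $o$.

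For the two-sided bound on $u$, I would argue as follows. The upper bound is immediate: from \eqref{u} one has $u\leq \phi(\rho)=\sinh\rho$, and Lemma \ref{C0-est} gives $\rho\leq C$, whence $u\leq \sinh C$. For the lower bound, Lemma \ref{C0-est} gives $\phi(\rho)\geq \sinh(1/C)>0$ while Lemma \ref{c1} gives $|\overline{\nabla}\rho|\leq C$, so
\begin{equation*}
u\geq \frac{\sinh^2(1/C)}{\sqrt{\sinh^2 C+C^2}}>0,
\end{equation*}
which is the desired positive lower bound depending only on the $C^0$ estimate (via which the $C^1$ estimate is itself controlled, by Lemma \ref{c1}).

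I do not anticipate any real obstacle; the only subtle point is to ensure that the radial graph parametrization remains valid for all $t\in[0,T)$, which is precisely guaranteed by the strict positivity of $\rho$ coming from Lemma \ref{C0-est}. Hence both assertions of the corollary follow at once from the preceding two lemmas and formula \eqref{u}.
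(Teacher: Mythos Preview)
Your proposal is correct and follows essentially the same approach as the paper: both arguments derive the two-sided bound on $u$ directly from the formula \eqref{u} together with the $C^0$ estimate (Lemma~\ref{C0-est}) and the $C^1$ estimate (Lemma~\ref{c1}). The only cosmetic difference is in the star-shapedness verification: the paper computes $\langle\partial_\rho,\nu\rangle=1/w\geq 1/C'$ explicitly, whereas you phrase it as the persistence of the radial graph representation; these are equivalent and the content is the same.
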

\begin{proof}
	Recall \eqref{u}, 
	\begin{align*}
	u  = \frac{\phi}{\sqrt{1 +\frac{|\overline{\nabla} \rho|^2}{\phi^2}}}.
	\end{align*}
	The upper and lower bounds of $u$ follows from Lemma \ref{sec:3.1} and \ref{c1}. Besides, we have 
	\begin{align*}
	\metric{\partial_\rho}{\nu} = \frac{u}{\phi} =\frac{1}{w}= \frac{1}{\sqrt{1+ \frac{|\overline{\nabla} \rho|^2}{\phi^2}}} \geq \frac{1}{C'}
	\end{align*}
	for some $C'>0$ depends on $\max|\overline{\nabla}\rho|$ and $\min\rho$. Thus the hypersurface preserves star-shaped along the flow \eqref{s2:flow-entire}.
\end{proof}
 \begin{rem}
When the hypersurface is uniformly convex in hyperbolic space, the upper and lower bounds of the support function of $u$ follow from the uniform bound of the radial function $\rho$.
\end{rem}

\section{$C^2$ estimates}\label{sec:4}
In this section, let us assume that we have already obtained the uniform upper and lower bounds of the radial function $\rho$ and the global term $\tilde{\eta}(t)$ along the flow \eqref{s2:flow-entire}. From Lemma \ref{c1} and Corollary \ref{cor u}, the uniform bound of $\rho$ implies the upper and lower bounds of $w$ and the support function $u$. We shall establish the $C^2$ estimate under this assumption.
\subsection{The bounds of $K$}
In this subsection, we show that $K$ is bounded from above and below along \eqref{s2:flow-entire}.
\begin{lem}\label{sk low}
Along \eqref{s2:flow-entire}, there is a constant $c>0$ depending on $M_0$, $\a$, $n$ and the uniform bounds of $\rho$, $\tilde{\eta}(t)$ and $f$, such that
\begin{equation*}
K\geq c.
\end{equation*}
\end{lem}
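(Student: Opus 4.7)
The plan is a parabolic maximum principle argument applied to an auxiliary function that measures the relative smallness of $\Theta := \phi(\rho)^{\alpha} f(\theta) K$ compared with the support function $u$. Since $\phi$ and $f$ are bounded above and below by Section~\ref{sec:3}, a positive lower bound for $K$ is equivalent to a positive lower bound for $\Theta$; moreover, the stationary equation \eqref{alex} reads precisely $\Theta = u$ (and $\Theta = c u$ in the normalized case), so the natural test quantity is
\[
Q := \log u - \log \Theta,
\]
and the goal is to bound $Q$ from above uniformly in time.

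First I would derive the Lagrangian evolution of $\log \Theta$ by combining $\partial_t K / K = (h^{-1})_{j}^{\,i}\, \partial_t h_{i}^{\,j}$ with \eqref{evolution h}, together with $\partial_t \phi = \phi'(-\Theta u/\phi + \tilde\eta\, \phi)$ and the transport of $f\circ\theta$ along the tangential motion on $\mathbb{S}^{n}$. This yields a parabolic equation whose principal part is the linear elliptic operator $\mathcal{L} := K^{-1} (h^{-1})_{j}^{\,i} \nabla_{i} \nabla^{\,j}$, with zeroth-order contributions $\Theta\, H/K - n\tilde\eta\, \phi' + (\tilde\eta u - \Theta)\, \mathrm{tr}(h^{-1})$ and first-order contributions proportional to $\nabla \log(\phi^{\alpha} f)$ paired with $\nabla \log \Theta$. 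Combined with \eqref{u ev} for $\log u$ (after using \eqref{uij} to re-express $\langle V,\nabla\Theta\rangle/u$), this produces a parabolic equation $\partial_t Q = \mathcal{L}\, Q + R$, where $R$ is a zeroth- and first-order remainder depending on $Q, \rho, \tilde\eta, f$ and $\nabla\Theta$.

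At a spatial maximum $(p_{0}, t_{0})$ of $Q$ with $t_{0} > 0$, the critical-point conditions $\nabla Q = 0$ and $\mathcal{L} Q \leq 0$ (using positivity of $h^{-1}$) reduce everything to a pointwise algebraic inequality. The decisive observation is that if $Q$ is large, then $\Theta \ll u$, so $\tilde\eta u - \Theta$ is bounded below by a positive constant depending only on the uniform bounds of $u$ and $\tilde\eta$, while $\mathrm{tr}(h^{-1}) \geq n\, K^{-1/n} \to \infty$ as $K \to 0$. Consequently the term $(\tilde\eta u - \Theta)\,\mathrm{tr}(h^{-1})$, which contributes positively to $\partial_t \log \Theta$ and hence negatively to $\partial_t Q$, dominates the right-hand side at $(p_0, t_0)$ once $Q$ exceeds some uniform threshold. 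This forces $\partial_t Q < 0$ there, yielding a uniform upper bound for $Q$ and thus $K \geq c > 0$.

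The main obstacle is the bookkeeping of the tangential-gradient terms from $\nabla(\phi^{\alpha} f)$ that enter the evolution of $\log \Theta$. After using $\nabla Q(p_0, t_0) = 0$ to replace $\nabla \log \Theta$ by $\nabla \log u$, these terms are bounded by the $C^{0}$ and $C^{1}$ estimates of Section~\ref{sec:3}; one must then verify that they are dominated by the blow-up of $(\tilde\eta u - \Theta)\,\mathrm{tr}(h^{-1})$. This is careful bookkeeping rather than a conceptually deep difficulty, but it genuinely uses the uniform lower bound of $u$ (equivalently of $\rho$), the bounds on $f, |\overline{\nabla} f|$, and the assumed control of $\tilde\eta$.
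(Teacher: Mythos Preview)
Your mechanism is the same as the paper's: the term $(\tilde\eta u-\Theta)\,\mathrm{tr}(h^{-1})$ (equivalently $(\tilde\eta u-\Theta)\,\sigma_{n-1}/K$) forces a lower bound on $K$ because it blows up as $K\to 0$. The paper, however, takes a cleaner route: it applies the minimum principle directly to $\Theta=\phi^\alpha f K$, with no auxiliary $\log u$. Computing $\partial_t\Theta$ via \eqref{evolution h} (so that the Hessian falls on $\Theta$, not on $K$) yields the closed form \eqref{phi2}, which contains \emph{no first-order terms at all}. At a spatial minimum one then reads off the ODE inequality $\frac{d}{dt}\Theta_{\min}\geq -c_1\Theta_{\min}^2-c_2\Theta_{\min}+c_3\Theta_{\min}^{(n-1)/n}$, and the last term dominates as $\Theta_{\min}\to 0$. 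Your addition of $\log u$ is harmless (the $\Theta H$ contributions from $\partial_t\log u$ and $\partial_t\log\Theta$ cancel), but it buys nothing and lengthens the bookkeeping.

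One caution about the obstacle you flag. The ``gradient terms proportional to $\nabla\log(\phi^\alpha f)$ paired with $\nabla\log\Theta$'' are an artifact of decomposing $\log\Theta=\alpha\log\phi+\log f+\log K$ and evolving each piece; they disappear if you evolve $\Theta$ as a whole. More importantly, your proposed remedy---replace $\nabla\log\Theta$ by $\nabla\log u$ at the critical point and bound it by the $C^0$, $C^1$ estimates---does not work as stated: by \eqref{uij} one has $\nabla_i u=\langle V,X_k\rangle h_i{}^k$, so $|\nabla\log u|$ depends on the second fundamental form and is \emph{not} controlled by $C^0$ and $C^1$ alone. Fortunately this gap is moot once you use the gradient-free evolution \eqref{phi2}, but it is worth noting that the shortcut you describe would fail in settings where such cross terms genuinely survive.
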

\begin{proof}
First, we calculate the evolution equation of $\Theta=\phi^{\a}fK$
\begin{equation}\label{phi1}
\begin{aligned}
\frac{\partial}{\partial t}\Theta=&\phi(\rho)^{\a}f\partial_tK+Kf\partial_t \f \phi(\rho)^{\a}\r \\
=&\a\phi^{\a-1}\phi'\partial_t\rho fK+\phi^{\a}f\frac{\partial K}{\partial h_i{}^j}\partial_t h_i{}^j.
\end{aligned}
\end{equation}
Differentiating $\metric{V}{V}=\phi^2$, we obtain
\begin{align}
\phi \phi'\partial_t \rho=\metric{\partial_tV}{V}=\phi'\metric{\partial_t X}{V}.
\end{align}
By \eqref{s2:flow-entire}, we have
\begin{align}\label{rho ev}
\partial_t\rho=\frac{\metric{\partial_tX}{V}}{\phi}=\frac{-\Theta u+\tilde{\eta}(t)\phi^2}{\phi}=-\frac{\Theta}{w}+\tilde{\eta}(t)\phi.
\end{align} 
Inserting \eqref{evolution h} and \eqref{rho ev} into \eqref{phi1}, we get
\begin{equation}\label{phi2}
\begin{aligned}
&\frac{\partial}{\partial t}\Theta
\\
=&-\a\phi^{\a-1}\phi'\frac{\Theta}{w}fK+\a\tilde{\eta}(t)\phi'\Theta+\phi^{\a}f\frac{\partial K}{\partial h_i{}^j}\f\nabla_i\nabla^j \Theta+\Theta h_i{}^kh_k{}^j-\phi'\tilde{\eta}(t)h_i{}^j+\f\tilde{\eta}(t)u-\Theta\r\delta_i{}^j\r\\
=&\phi^{\a}f\dot{K}^{ij}\Theta_{ij}-\frac{\a\Theta^2\phi'}{w\phi}+(\a-n)\tilde{\eta}(t)\phi'\Theta+(\tilde{\eta}(t)u-\Theta)\phi^{\a}f\s_{n-1}+\Theta^2H,
\end{aligned}
\end{equation}
where we use $\frac{\partial K}{\partial h_i{}^j}\delta_i{}^j=\s_{n-1}$ and $\frac{\partial K}{\partial h_i{}^j} h_i{}^kh_k{}^j=KH$ in the last equality.
 At the spatial minimum point of $\Theta$ on $M_t$, we have $\phi^{\a}f\dot{K}^{ij}\Theta_{ij}\geq 0$. We can assume $\Theta<\inf\limits_{[0,T)} \tilde{\eta}(t)\min \frac{u}{2}$ from Corollary \ref{cor u} without loss of generality. Hence,
\begin{equation*}
\begin{aligned}
	\frac{d}{dt} \Theta_{\min} (t) \geq& -\frac{\a\Theta_{\min}^2\phi'}{w\phi}+(\a-n)\tilde{\eta}(t)\phi'\Theta_{\min}+\inf_{[0,T)} \tilde{\eta}(t)\frac{u_{\min}}{2}\phi^{\a}f\Theta_{\min}^{\frac{n-1}{n}}\\
	 \geq& -c_1 \Theta_{\min}^2(t) - c_2 \Theta_{\min}+c_3\Theta_{\min}^{\frac{n-1}{n}}
\end{aligned}
\end{equation*}
for some constant $c_1,c_2,c_3 >0$, where all of them depend on $\a$, $n$ and the uniform upper and lower bounds of $\rho$, $ \tilde{\eta}(t)$ and $f$. Hence there is a positive constant $c_4$ depending only on $c_1,c_2,c_3$, such that if $\Theta_{\min}(t)\in(0,c_4)$, we have $\frac{d}{dt} \Theta_{\min}(t) > 0$. Therefore $\Theta_{\min} (t) \geq \min \lbrace \Theta_{\min} (0), c_4, \inf \tilde{\eta}(t)\min \frac{u}{2}\rbrace$. Since $\rho$ and $f$ are bounded from above, $K= \phi^{- \a}f^{-1}\Theta$ is bounded from below by some positive constant $c$. 
\end{proof}

\begin{lem}\label{sk upp}
Along \eqref{s2:flow-entire}, there is a constant $C>0$ depending only on $M_0$, $\a$, $n$ and the uniform bounds of $\rho$, $\tilde{\eta}(t)$, $\min f$ and $\|f\|_{C^1}$, such that
\begin{equation*}
K\leq C.
\end{equation*}
\end{lem}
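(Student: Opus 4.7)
The plan is to bound $\Theta = \phi^{\alpha}f K$ from above via the auxiliary function
\begin{equation*}
W(x,t)=\frac{\Theta(x,t)}{u(x,t)-a},\qquad a:=\tfrac{1}{2}\inf_{\mathbb{S}^{n}\times[0,T)}u>0,
\end{equation*}
which is well-defined and positive by Corollary \ref{cor u}. Since the upper bounds of $\rho$, $\tilde{\eta}(t)$, $u$ and $1/f$ are already in hand, it suffices to show $W$ is uniformly bounded from above, for then $K$ is. Suppose $W$ attains its spacetime maximum on $[0,t_{0}]$ at an interior point $(p_{0},t_{0})$; at that point $\nabla W=0$, $\nabla^{2}W\leq 0$, and $\partial_{t}W\geq 0$.

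The critical condition $\nabla W=0$ gives $\nabla\Theta=W\nabla u$, and at second order $\Theta_{ij}=(u-a)W_{ij}+Wu_{ij}$, so that
\begin{equation*}
\phi^{\alpha}f\,\dot K^{ij}\Theta_{ij}\;\leq\;W\,\phi^{\alpha}f\,\dot K^{ij}u_{ij}.
\end{equation*}
Using the Codazzi-type identity \eqref{uij} together with $\dot K^{ij}h_{ij}=nK$ and $\dot K^{ij}h_{i}{}^{k}h_{kj}=KH$, I compute
\begin{equation*}
\phi^{\alpha}f\,\dot K^{ij}u_{ij}\;=\;\langle V,\nabla\Theta\rangle-K\langle V,\nabla(\phi^{\alpha}f)\rangle+n\phi'\Theta-u\Theta H.
\end{equation*}
Inserting this into the evolution formula \eqref{phi2} for $\Theta$, combining with \eqref{u ev} for $u$, and exploiting the cancellation of the $\langle V,\nabla\Theta\rangle$ terms produced by the critical equation $\nabla\Theta=W\nabla u$, I obtain at $(p_{0},t_{0})$ an inequality of the form
\begin{equation*}
0\;\leq\;(u-a)\partial_{t}W\;\leq\;-\frac{a}{u-a}\,\Theta^{2}H+\mathcal{R},
\end{equation*}
where $\mathcal{R}$ collects all remaining terms and depends polynomially on $\Theta$, $\phi$, $\phi'$, $u$, $\tilde{\eta}(t)$, $f$, $|\nabla(\phi^{\alpha}f)|$ and on $\sigma_{n-1}$. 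The crucial algebraic miracle is that the ``bad'' term $\Theta^{2}H$ appearing in \eqref{phi2} is exactly cancelled, up to a strictly negative multiple $-\frac{a}{u-a}\Theta^{2}H$, by the contribution $W\phi^{\alpha}f\dot K^{ij}u_{ij}$ via the $-u\Theta H$ piece; this is precisely why the factor $u-a$, with $a>0$, is introduced.

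To close the argument, I estimate the error $\mathcal{R}$. Terms of the form $(\tilde{\eta}u-\Theta)\phi^{\alpha}f\sigma_{n-1}$ and $-WK\langle V,\nabla(\phi^{\alpha}f)\rangle$ grow at most like $\Theta\,\sigma_{n-1}$ and $\Theta\,K$ respectively, both controlled by Newton--MacLaurin inequalities $\sigma_{n-1}\leq c_{n}HK^{(n-2)/(n-1)}$ (or more crudely $\sigma_{n-1}\leq c_{n}K\cdot(H/K)$). On the other hand, AM-GM yields $H\geq nK^{1/n}$, so the dominant negative term satisfies
\begin{equation*}
\frac{a}{u-a}\,\Theta^{2}H\;\geq\;c\,\Theta^{2}K^{1/n}\;=\;c\,\phi^{2\alpha}f^{2}K^{\,2+1/n}.
\end{equation*}
Since this grows strictly faster in $K$ than any term in $\mathcal{R}$ (all of which are at most $O(K^{2})$ after using $\sigma_{n-1}\leq nH^{n-1}$ and the uniform bounds on $\phi,f,\tilde{\eta},u$), there is a constant $K_{*}$ beyond which the right-hand side is strictly negative, contradicting $\partial_{t}W\geq 0$. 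Hence $W(p_{0},t_{0})\leq C$, and therefore $K\leq C$ uniformly.

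The main obstacle I expect is precisely the $\Theta^{2}H$ term in \eqref{phi2}: without a further quantity to compete with it, the maximum principle applied directly to $\Theta$ gives nothing. The choice of denominator $u-a$ (with $a$ strictly smaller than $\min u$) is what transforms this dangerous term into a strictly negative one in the evolution of $W$, which then dominates the other polynomial-in-$\Theta$ terms by a simple AM-GM bound on $H$.
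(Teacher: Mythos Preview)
Your approach is essentially the paper's: the paper works with $Q=\log\Theta-\log(u-a)=\log W$, and the key mechanism---the cancellation producing $-\frac{a}{u-a}\Theta^{2}H$, then domination via $H\geq nK^{1/n}$---is identical. One point to clean up: your Newton--MacLaurin estimates for $\sigma_{n-1}$ are not correct as stated (neither $\sigma_{n-1}\leq c_{n}HK^{(n-2)/(n-1)}$ nor $\sigma_{n-1}\leq c_{n}H$ holds for general positive curvatures), but they are also unnecessary. The paper simply assumes without loss that $\Theta>\sup\tilde{\eta}\cdot\sup u$ at the maximum point (otherwise $\Theta$ is already bounded), so that $(\tilde{\eta}u-\Theta)\phi^{\alpha}f\sigma_{n-1}\leq 0$ and may be discarded; with this, every remaining term in $\mathcal{R}$ is genuinely $O(\Theta^{2})$ and your conclusion follows.
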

\begin{proof}
Let $Q= \log\Theta-\log(u-a)$, where $a = \frac{1}{2}\inf_{M \times [0,T)} u$. 
Recall \eqref{u ev},
\begin{equation}
\begin{aligned}
\frac{\partial}{\partial  t}u=&-\phi'\Theta+\phi'\tilde{\eta}(t) u+\metric{V}{\nabla \Theta }\\
=&-\phi'(\Theta-\tilde{\eta}(t)u)+\a\phi'\frac{\Theta}{\phi}\metric{V}{\nabla\rho}+\frac{\metric{V}{\nabla f}\Theta}{f}+\frac{\metric{\nabla K}{V}\Theta}{K}\\
=&-\phi'(\Theta-\tilde{\eta}(t)u)+\a\phi'|\nabla\rho|^2\Theta+\metric{V}{\nabla \log f}\Theta+\metric{\nabla \log K}{V}\Theta.
\end{aligned}
\end{equation}
Combining with \eqref{uij}, we obtain the evolution of $u$
\begin{equation}
\begin{aligned}\label{Lu} 
\partial_t u=\phi^{\a}f\dot{K}^{ij}u_{ij}-\phi'\f(n+1)\Theta-\tilde{\eta}(t)u\r+\a\phi'|\nabla\rho|^2\Theta+\frac{\metric{V}{\nabla f}}{f}\Theta+ u\phi^{\a}f\dot{K}^{ij}h_i{}^kh_{kj},
\end{aligned}
\end{equation}
where we use the Codazzi equation in the equality. 
$|\nabla f(X)|$ can be estimated as 
\begin{align}\label{fi}
|\nabla f(X)|\leq C(\min\rho, \|\rho\|_{C^1}(\mathbb{S}^n), \|f\|_{C^1}(\mathbb{S}^n)).
\end{align}
At the spatial maximum point $(p_0,t)$ of $Q$ on $M_t$, we have 
\begin{align}\label{cr-Q}
\frac{\nabla \Theta}{\Theta} = \frac{\nabla u}{u-a}.
\end{align}
Inserting \eqref{Lu} and \eqref{cr-Q} into \eqref{phi2}, we obtain
\begin{align*}
\partial_tQ=&\frac{\partial_t\Theta}{\Theta}-\frac{\partial_t u}{u-a}\\
  =&\frac{\dot{K}^{ij}\Theta_{ij}}{K}-\frac{\a\Theta\phi'}{w\phi}+(\a-n)\tilde{\eta}(t)\phi'+(\tilde{\eta}(t)u-\Theta)\frac{\s_{n-1}}{K}+\Theta H\\
  &-\frac{\phi^{\a}f\dot{K}^{ij}u_{ij}}{u-a}+\phi'\frac{(n+1)\Theta-\tilde{\eta}(t)u}{u-a}-\a\phi'|\nabla\rho|^2\frac{\Theta}{u-a}-\frac{\metric{V}{\nabla f}}{f}\frac{\Theta}{u-a}-\frac{u}{u-a}\Theta H\\
  \leq&\phi^{\a}f\dot{K}^{ij}Q_{ij}+(\a-n)\tilde{\eta}(t)\phi'+\phi'\frac{(n+1)\Theta}{u-a}+\frac{\phi\Theta|\nabla f|}{f(u-a)}-n(C_n^k)^{-\frac{1}{n}}\phi^{\a}fK^{\frac{1+n}{n}}\frac{a}{u-a},
\end{align*}
 where we assume $\Theta>\max\eta \max_{M\times[0,T)} u$ from Corollary \ref{cor u} without loss of generality. Here we also use the Newton-Maclaurin inequality $H\geq n K^{\frac{1}{n}}$ and the Cauchy-Schwartz inequality $\metric{V}{\nabla f}\leq \phi|\nabla f|$ to obtain the last inequality. Then we get
\begin{align}
\partial_t Q\leq&C_1+C_2\Theta-C_3\Theta^{\frac{n+1}{n}}
\end{align}
for some $C_1,C_2,C_3>0$ depending on $\a$, $n$, $\min f$ and $\|f\|_{C^1}$, the uniform bounds of $\rho$ and $\tilde{\eta}(t)$. Besides, there is a constant $C_4$ depending on the uniform upper and lower bounds of $u$, such that
\begin{align}
\frac{1}{C_4}e^{Q}\leq \Theta=(u-a)e^{Q}\leq C_4e^{Q}.
\end{align}
We have 
\begin{align*}
\partial_t Q\leq&C_1+C_2 C_4e^{Q}-C_3(C_4)^{-\frac{n+1}{n}}e^{\frac{n+1}{n}Q}.
\end{align*}
Therefore, $Q\leq \max\lbrace C_5,Q_{\max} (0),\max\eta \max\limits_{M\times[0,T)} u\rbrace$ where $C_5$ is a positive constant depending on $C_1$, $C_2$, $C_3$ and $C_4$. Hence $K=\phi^{-\a}f^{-1}(u-a)e^Q$ is bounded from above by some positive constant $C$ depending only on $M_0$, $\a$, $n$ $\min f$, $\|f\|_{C^1}$ and the uniform bounds of $\rho$ and $\tilde{\eta}(t)$.
\end{proof}
In the proof of Lemma \ref{sk low} and Lemma \ref{sk upp} we also obtain the uniform bounds of $\Theta$.
\begin{cor}\label{cor Phi}
	Along \eqref{s2:flow-entire}, $\Theta = \phi^{\a}f K$ is uniformly bounded, i.e.,
	\begin{align*}
	\frac{1}{C} < \Theta(x, t) < C
	\end{align*}
	for some constant $C>0$ that only depends on $M_0$, $\a$, $n$ $\min f$, $\|f\|_{C^1}$ and the uniform bounds of $\rho$ and $\tilde{\eta}(t)$.
\end{cor}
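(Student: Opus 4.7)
The plan is to read off the bounds on $\Theta$ directly from the arguments already carried out in Lemmas \ref{sk low} and \ref{sk upp}, rather than run a new proof. Both lemmas in fact control $\Theta$ first and only convert to a bound on $K$ at the last line via $K = \phi^{-\alpha}f^{-1}\Theta$, so the two-sided estimate on $\Theta$ claimed here is already implicit in those proofs; what remains is to audit the dependence of the constants.

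For the lower bound, I would apply the parabolic maximum principle to the evolution equation \eqref{phi2} at a spatial minimum of $\Theta$. There the Hessian term $\phi^{\alpha} f \dot{K}^{ij}\Theta_{ij}$ is non-negative, and the term $(\tilde{\eta} u - \Theta)\phi^{\alpha} f \s_{n-1}$ is non-negative as soon as $\Theta < \tfrac12 \inf\tilde{\eta}\cdot \min u$, which we may assume without loss of generality using the two-sided bound on $u$ from Corollary \ref{cor u}. The remaining quadratic and linear terms in $\Theta_{\min}$ are dominated by the concave $c_3 \Theta_{\min}^{(n-1)/n}$ once $\Theta_{\min}$ is sufficiently small, giving an ODE inequality $\tfrac{d}{dt}\Theta_{\min} \geq -c_1\Theta_{\min}^2 - c_2\Theta_{\min} + c_3\Theta_{\min}^{(n-1)/n}$ with a positive barrier level $c_4>0$. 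Hence $\Theta_{\min}(t) \geq \min\{\Theta_{\min}(0),\, c_4,\, \tfrac12 \inf\tilde{\eta}\cdot \min u\}$, with all constants depending only on the quantities listed in the Corollary.

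For the upper bound, I would use the auxiliary function $Q = \log \Theta - \log(u-a)$ with $a = \tfrac12 \inf_{M\times[0,T)} u$, exactly as in Lemma \ref{sk upp}. At a space-time maximum of $Q$ the first-order condition \eqref{cr-Q} holds, and combining \eqref{phi2} with the evolution \eqref{Lu} of $u$, together with the Newton--Maclaurin inequality $H \geq n K^{1/n}$ and the crude bound $\langle V, \nabla f\rangle \leq \phi |\nabla f|$, yields a differential inequality of the form $\partial_t Q \leq C_1 + C_2 \Theta - C_3 \Theta^{(n+1)/n}$. The super-linear negative term dominates once $\Theta$ is large, so $Q$ cannot blow up; converting back via $\Theta = (u-a)e^{Q}$ and invoking the two-sided bound on $u$ gives the desired uniform upper bound on $\Theta$.

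The main, and really only non-routine, step was locating auxiliary quantities whose evolution admits both a positive stabiliser at small values and a negative stabiliser at large values; this was precisely the content of the two preceding lemmas. Once those differential inequalities are in hand, the Corollary reduces to bookkeeping, verifying that every constant depends only on $M_0$, $\alpha$, $n$, $\min f$, $\|f\|_{C^1}$, and the uniform bounds on $\rho$ and $\tilde{\eta}(t)$.
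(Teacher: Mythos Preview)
Your proposal is correct and matches the paper's approach exactly: the Corollary is recorded there with the one-line justification that the bounds on $\Theta$ were already obtained in the proofs of Lemmas \ref{sk low} and \ref{sk upp}, and your write-up simply recapitulates those two arguments. The dependence of the constants that you list also agrees with what those proofs use.
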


We established the $C^0$, $C^1$ estimates in Section \ref{sec:3} along \eqref{flow-s} for the proof of Theorem \ref{thm1}. Due to Lemma \ref{sk low} and Lemma \ref{sk upp}, we have the following Corollary.
\begin{cor}
Let $M_0$ be as in Theorem \ref{thm1}. When $\a>n+1$ or $\a=n+1$ with $\max f< 1$, there is a positive constant $C$ depending only on $M_0$, $\a$, $n$, $\min f$ and $\|f\|_{C^1}$ along the flow \eqref{flow-s}, such that
\begin{align}
\frac{1}{C}\leq K\leq C.
\end{align}
\end{cor}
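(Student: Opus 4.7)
The plan is to verify that all the hypotheses required by Lemmas \ref{sk low} and \ref{sk upp} are satisfied for the flow \eqref{flow-s} under the assumptions of Theorem \ref{thm1}, and then invoke those lemmas directly. Since \eqref{flow-s} is the special case of the generic flow \eqref{s2:flow-entire} obtained by setting $\tilde{\eta}(t)\equiv 1$, the ``uniform bound on $\tilde{\eta}(t)$'' assumption that appears in both Lemma \ref{sk low} and Lemma \ref{sk upp} is automatic.

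The remaining ingredient is a uniform $C^0$ bound on the radial function $\rho$. This is precisely what Lemma \ref{C0-est} supplies, and its hypotheses match those of Theorem \ref{thm1}: either $\a>n+1$, or $\a=n+1$ with $f<1$ on $\mathbb{S}^n$ (equivalent to $\max f<1$ by smoothness of $f$ and compactness of $\mathbb{S}^n$). Once $\rho$ is controlled from above and below, Lemma \ref{c1} yields $|\overline{\nabla}\rho|\leq C$, and Corollary \ref{cor u} delivers two-sided bounds on the support function $u$ as well as on the gradient quantity $w$, while simultaneously guaranteeing that $M_t$ stays star-shaped. Moreover, since $f$ is a smooth positive function on the compact manifold $\mathbb{S}^n$, the quantities $\min f$ and $\|f\|_{C^1}$ that enter the $K$-estimates are automatically finite and positive.

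With all prerequisites in place, applying Lemma \ref{sk low} gives $K\geq c>0$ and applying Lemma \ref{sk upp} gives $K\leq C<\infty$, where the constants depend only on $M_0$, $\a$, $n$, $\min f$, $\|f\|_{C^1}$, and on the uniform bounds of $\rho$ (which, via Lemma \ref{C0-est}, reduce to the data listed in the statement). This concludes the two-sided estimate for $K$. There is essentially no obstacle: the corollary is a bookkeeping assembly of the already-established $C^0$, $C^1$ control from Section \ref{sec:3} together with the pointwise curvature estimates of Lemmas \ref{sk low} and \ref{sk upp}; the only point to track is that the $\tilde{\eta}$-dependence of the constants in those lemmas collapses trivially for the unnormalized flow \eqref{flow-s}.
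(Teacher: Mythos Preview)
Your proposal is correct and takes essentially the same approach as the paper: the corollary is stated immediately after Lemmas \ref{sk low} and \ref{sk upp} with the one-line justification that the $C^0$, $C^1$ estimates from Section \ref{sec:3} feed into those two lemmas, which is precisely the assembly you describe. Your additional remarks about $\tilde{\eta}(t)\equiv 1$ collapsing the $\tilde{\eta}$-dependence and about $\max f$ being absorbed into $\|f\|_{C^1}$ are accurate clarifications of what the paper leaves implicit.
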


\subsection{The bound of principal curvatures}
In this subsection, we shall show the principal curvatures of $M_t$ is uniformly bounded along the flow \eqref{s2:flow-entire} under the assumption in section \ref{sec:4}.
\begin{lem}\label{conv-c2}
	Along \eqref{s2:flow-entire}, there is a positive constant $C$ depending on $M_0$, $\a$, $n$, $\min f$, $\|f\|_{C^2}(\mathbb{S}^n)$ and the uniform bounds of $\rho$, $\tilde{\eta}$, such that the principal curvatures satisfy
	\begin{align*}
	\frac{1}{C} \leq \k_i(\cdot, t) \leq C , \quad \forall \ t \ \in [0,T) \ {\rm and} \ i=1,2,\cdots,n.
	\end{align*}
\end{lem}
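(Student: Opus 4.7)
\medskip

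\noindent\textbf{Proof proposal.} Because the bounds on $K=\kappa_1\cdots\kappa_n$ obtained in Corollary \ref{cor Phi} are already two-sided, it will be enough to bound the largest principal curvature $\lambda_{\max}(x,t)$ from above: if $\lambda_{\max}\le C$ then from $K\ge 1/C$ we get $\kappa_i\ge K/\lambda_{\max}^{n-1}\ge c>0$, so all $\kappa_i$ lie in a fixed compact subset of $(0,\infty)$. The whole task therefore reduces to proving a uniform upper bound for $\lambda_{\max}$.

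The plan is to apply the maximum principle to an auxiliary function of the form
\begin{equation*}
W(x,t)\;=\;\log\lambda_{\max}(x,t)\;-\;A\log(u-a)\;+\;B\,\Phi(\rho),
\end{equation*}
where $a=\tfrac12\inf_{M\times[0,T)}u>0$ (finite and positive by Corollary \ref{cor u}), and $A,B>0$ are large constants to be fixed later. At a spatial maximum $(p_0,t)$ of $W$, I would pick a local orthonormal frame in which the Weingarten matrix $\{h_i{}^j\}$ is diagonal and $\lambda_{\max}=h_n{}^n$; the usual perturbation argument (replace $\log\lambda_{\max}$ by $\log h_n{}^n$ after adjustment) makes $W$ smooth near $(p_0,t)$ without changing its value or critical-point behaviour.

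Next I would compute $\partial_t W-\phi^\alpha f\dot K^{ij}\nabla_i\nabla_j W$. Three ingredients enter:
\begin{enumerate}
\item[(a)] The evolution of $h_n{}^n$ from \eqref{evolution h} produces the dangerous term $\Theta(h_n{}^n)^2$; differentiating twice the identity $\Theta=\phi(\rho)^\alpha f(\theta)K$ and using the Codazzi and Ricci identities on the hyperbolic background (so as to commute $\nabla_n\nabla^n h_{ij}$ into $\nabla_i\nabla_j h_n{}^n$ plus curvature corrections that are at most quadratic in $h$), one recasts the linearised operator applied to $\log h_n{}^n$. The concavity of $K^{1/n}$, in the form $\ddot K^{pq,rs}\eta_{pq}\eta_{rs}\le \tfrac{n-1}{n}K^{-1}\bigl(\dot K^{pq}\eta_{pq}\bigr)^2$ applied with $\eta_{pq}=\nabla_n h_{pq}$, cancels the unsigned $\ddot K$-contribution against the $(\nabla_n h_n{}^n)^2/(h_n{}^n)^2$ term provided by the gradient of $\log h_n{}^n$.
\item[(b)] The evolution of $\log(u-a)$ is furnished by \eqref{Lu}; as in the proof of Lemma \ref{sk upp}, its contribution to $-A\,\partial_t\log(u-a)$ generates the \emph{good} term $-A\,\tfrac{u}{u-a}\,\phi^\alpha f\dot K^{ij}h_i{}^k h_{kj}=-A\,\tfrac{u}{u-a}\,\Theta H$, which is negative and of size $\Theta H\gtrsim \Theta\lambda_{\max}$.
\item[(c)] The evolution of $B\Phi(\rho)$ produces, via $\nabla\Phi=V^\top$ and \eqref{Phiij}, a term $-B\,\phi^\alpha f\dot K^{ij}(\phi'g_{ij}-uh_{ij})=-B\phi'\phi^\alpha f\sigma_{n-1}+Bu\Theta$, which gives an additional negative contribution $-B\phi'\phi^\alpha f\sigma_{n-1}\lesssim -B\Theta H/\lambda_{\max}$, and is useful for absorbing lower-order terms such as $\phi^{\alpha-1}\phi'|\nabla\rho|\,fK$ and $\phi^\alpha|\nabla f|\,K$ that arise from differentiating $\phi^\alpha f$.
\end{enumerate}
At the critical point the condition $\nabla W=0$ lets me substitute $\nabla\log h_n{}^n=A\nabla\log(u-a)-B\nabla\Phi$, which controls the cross terms coming from $\phi^\alpha f\dot K^{ij}\nabla_i h_n{}^n\nabla_j h_n{}^n/h_n{}^n$ by $A,B$ times $\dot K^{ij}\nabla_i u\nabla_j u/(u-a)^2$ and $\dot K^{ij}\nabla_i\rho\nabla_j\rho$, both of which are bounded in terms of $C^0,C^1$ quantities.

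The main obstacle is the careful bookkeeping of (a): one has to verify that after using the Codazzi--Ricci commutation on $\mathbb H^{n+1}$ (which contributes extra terms of the form $h_n{}^n\delta_i{}^j-h_i{}^j$ because the sectional curvature equals $-1$), the only remaining un-signed quadratic term is $\Theta h_n{}^n$, up to expressions already controlled by $C^1$ bounds. Once this is in place, I first choose $A$ large (depending on $\inf u,\sup u,a$) so that $-A\tfrac{u}{u-a}\Theta H$ beats $\Theta h_n{}^n$ (using $H\ge \lambda_{\max}=h_n{}^n$), and then choose $B$ large so that the $B\Phi$-contribution absorbs the remaining lower-order terms driven by $\|f\|_{C^2}$, $\min f$, $\|\rho\|_{C^1}$ and $\tilde\eta(t)$. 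With these choices the parabolic inequality at the maximum becomes
\begin{equation*}
\partial_t W\;\le\;C'-c'\,\Theta\,h_n{}^n,
\end{equation*}
where $c',C'>0$ depend only on the data listed in the statement. Since $\Theta$ is bounded below, this gives $W(p_0,t)\le W_0$ for some uniform $W_0$, hence $\log\lambda_{\max}\le W_0+A\log(\sup u-a)-B\inf\Phi$, i.e.\ $\lambda_{\max}\le C$ uniformly on $M\times[0,T)$. Combined with $K\ge 1/C$ from Corollary \ref{cor Phi} this yields the two-sided bound on $\kappa_i$ claimed in the lemma.
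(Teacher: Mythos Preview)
Your route differs from the paper's in a substantive way: you try to bound $\kappa_{\max}$ directly, with a test function $W=\log\lambda_{\max}-A\log(u-a)+B\Phi$, and then recover the lower bound on $\kappa_i$ from $K\ge c$. The paper does the opposite: it bounds the largest \emph{principal radius} $\tilde h^{11}$ (the maximum eigenvalue of $(h_i{}^j)^{-1}$) via the simpler test function $\log\tilde h^{11}+A\rho$, and then recovers the upper bound on $\kappa_{\max}$ from $K\le C$. These are not symmetric in difficulty for the operator $L=\phi^\alpha f\dot K^{ij}\nabla_i\nabla_j=\Theta\,\tilde h^{ij}\nabla_i\nabla_j$.

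The gap in your sketch is the concavity cancellation in (a). After the Codazzi--Ricci commutation, $(\partial_t-L)\log h_n{}^n$ contains the third--derivative package
\[
\frac{\phi^\alpha f\,\ddot K^{pq,rs}\nabla_n h_{pq}\nabla_n h_{rs}}{h_n{}^n}\;-\;\frac{\phi^\alpha f\,\dot K^{ij}\nabla_i h_{nn}\nabla_j h_{nn}}{(h_n{}^n)^2}.
\]
Using $\ddot K^{pq,rs}=K(\tilde h^{pq}\tilde h^{rs}-\tilde h^{pr}\tilde h^{qs})$, the first term equals $\tfrac{\Theta}{h_n{}^n}\bigl[(\nabla_n\log K)^2-\sum_{p,q}\tilde h^{pp}\tilde h^{qq}(h_{pqn})^2\bigr]$, and $(\nabla_n\log K)^2=\bigl(\sum_p h_{ppn}/\kappa_p\bigr)^2$ involves \emph{all} $h_{ppn}$, not just $h_{nnn}$. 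Your concavity bound $\ddot K\le\tfrac{n-1}{n}K^{-1}(\dot K)^2$ only trades the $\ddot K$-term for $\tfrac{n-1}{n}\tfrac{\Theta}{K h_n{}^n}(\nabla_nK)^2$, which is \emph{not} the quantity $(\nabla_n h_{nn})^2/(h_n{}^n)^2$ supplied by the gradient of $\log h_n{}^n$; and the critical--point relation $\nabla W=0$ fixes $h_{nni}$ but says nothing about $h_{ppn}$ for $p\ne n$. So the third--derivative terms do not cancel as you claim, and your good term $-A\tfrac{u}{u-a}\Theta H$, while adequate for the zeroth--order bad term $\Theta h_n{}^n$, does not touch this remainder.

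This is precisely why the paper works with $\tilde h^{11}$. In that computation (\eqref{2 deriv tilde h}, \eqref{kpqrs}, \eqref{Qt2}) the three third--derivative terms combine via the Codazzi symmetry $h_{1pi}=h_{pi1}$ into
\[
-\Theta\,\tilde h^{11}\!\sum_{i,p}\tilde h^{ii}\tilde h^{pp}(h_{1ip})^2\;+\;\Theta\,(\tilde h^{11})^2\!\sum_i\tilde h^{ii}(h_{11i})^2,
\]
and since the $p=1$ part of the first sum already equals the second, the whole expression is $\le 0$ and can simply be discarded. The remaining analysis is then pure zeroth-- and first--order bookkeeping, with the single good term $-A c_1\tfrac{\phi'}{\phi}\tilde h^{11}$ coming from $A\,\dot K^{ij}\nabla_i\nabla_j\rho$ and \eqref{rhoij}. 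If you want to salvage your direction you would need either an inverse--concavity argument or a different organisation of the gradient terms; as written, step~(a) does not close.
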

\begin{proof}
 Denote by $\lambda(x,t)$ the maximal principal radii at $X(x,t)$. Let $A$ be a positive constant to be determined later. Denote by $Q'=\log \lambda(x,t) +A\rho$. Fix an arbitrary time $T_0\in (0,T)$. Assume that $Q'$ attains its maximum at $(x_0,t_0)$ provided $t\in[0,T_0)$. We then introduce a normal coordinate system $\lbrace\partial_i\rbrace$ around $(x_0,t_0)$, such that $\nabla_{\partial_i X}{}\partial_j X(x_0,t_0)=0$ for all $i,j=1,2\cdots,n$ and $h_{ij}(x_0,t_0)=\k_i(x_0,t_0)\d_{ij}$. Further, we can choose $\partial_1 |_{(x_0,t_0)}$ as the eigenvector with respect to $\lambda(x_0,t_0)$, i.e., $\lambda(x_0,t_0) = \tilde{h}^{11} (x_0, t_0)$. Assume that $\lbrace\tilde{h}^{ij}\rbrace$ is the inverse matrix of $\lbrace h_{ij}\rbrace$. Clearly
	\begin{align}\label{lambda}
	\lambda(x,t)=\max\lbrace\tilde{h}^{ij}(x,t)\xi_i\xi_j| g^{ij}(x,t)\xi_i\xi_j=1\rbrace
	\end{align}
	For the continuity, using this coordinate system we consider the auxiliary equation $Q=\log\upsilon +A\rho$, where $\upsilon=\frac{\tilde{h}^{11}}{g^{11}}$. Note that $\upsilon(x,t)\leq\lambda(x,t)$ from \eqref{lambda} and $\upsilon(x_0,t
	_0)=\lambda(x_0,t_0)$. Thus, $Q(x,t)\leq Q(x_0,t_0)$ for all $t\in[0,T_0)$. Now we can calculate the derivatives of $\upsilon$ at $(x_0,t_0)$ as follows:
	\begin{equation}\label{par t tilde h}
	\partial_t\upsilon=-(\tilde{h}^{11})^2\partial_t h_{11}+\tilde{h}^{11}\partial_t g_{11}=-\f \tilde{h}^{11}\r ^2\partial_t h_1{}^1,
	\end{equation}\label{1 deriv tilde h} 
	\begin{align}
	\nabla_i \upsilon =& \frac{\partial \tilde{h}_1{}^1}{\partial h_p{}^q} \nabla_i h_p{}^q=-\tilde{h}_1{}^p \tilde{h}_q{}^1 \nabla_i h_p{}^q,\quad \nabla_i \upsilon (x_0,t_0)=-(\tilde{h}^{11})^2 \nabla_i h_{11}.
	\end{align}
Then
\begin{align*}
	\nabla_j \nabla_i \upsilon =& \nabla_j (- \tilde{h}_1{}^p \tilde{h}_q{}^1 \nabla_i h_p{}^q)
	= -\nabla_j \tilde{h}_1{}^p \tilde{h}_q{}^1 \nabla_i h_p{}^q - \tilde{h}_1{}^p  \nabla_j \tilde{h}_q{}^1 \nabla_i h_p{}^q - \tilde{h}_1{}^p \tilde{h}_q{}^1 \nabla_j \nabla_i h_p{}^q\\
	 =& \tilde{h}_1{}^r\tilde{h}_l{}^p \tilde{h}_q{}^1 \nabla_j h_r{}^l  \nabla_i h_p{}^q + \tilde{h}_1{}^p \tilde{h}_r{}^1 \tilde{h}_q{}^s \nabla_j h_s{}^r \nabla_i h_p{}^q - \tilde{h}_1{}^p \tilde{h}_q{}^1 \nabla_j \nabla_i h_p{}^q
	 	\end{align*}
	 	and
\begin{align}\label{2 deriv tilde h} 
 \nabla_j \nabla_i \upsilon(x_0,t_0)	=& - (\tilde{h}^{11})^2 \nabla_j \nabla_i h_{11} + 2 (\tilde{h}^{11})^2 \tilde{h}^{pp} \nabla_i h_{1p} \nabla_j h_{1p}.   
\end{align}
 Here we denote by $\lbrace\tilde{h}_i{}^j\rbrace$ the inverse matrix of $\lbrace h_i{}^j\rbrace$, i.e., $\lbrace\tilde{h}_i{}^j\rbrace=\lbrace\tilde{h}^{ik}g_{kj}\rbrace$. Then We calculate the first term in \eqref{evolution h}
\begin{equation}\label{theta4}
\begin{aligned}
\nabla_j\nabla_i \Theta=&\phi^{\a}f\nabla_j\nabla_i K+K\nabla_j\nabla_i(\phi^{\a}f)+\nabla_i(\phi^{\a}f)\nabla_jK+\nabla_j(\phi^{\a}f)\nabla_iK\\
=&\phi^{\a}f\dot{K}^{pq}h_{pqij}+\phi^{\a}f\ddot{K}^{pq,rs}h_{pqi}h_{rsj}+fK\nabla_j\nabla_i(\phi^{\a})+K\phi^{\a}\nabla_j\nabla_if
+K\nabla_i\phi^{\a}\nabla_jf\\
&+K\nabla_j\phi^{\a}\nabla_if+f\nabla_i(\phi^{\a})\nabla_jK+f\nabla_j(\phi^{\a})\nabla_iK+\phi^{\a}\nabla_i f\nabla_jK+\phi^{\a}\nabla_jf\nabla_iK.
\end{aligned}
\end{equation}
Due to the Codazzi equation and Ricci identity, we have
\begin{equation}\label{kpq}
\begin{aligned}
\dot{K}^{pq}h_{pqij}=&\dot{K}^{pq}h_{piqj}=\dot{K}^{pq}\f h_{pijq}+h_{lp}R_{liqj}+h_{li}R_{lpqj}\r\\
=&\dot{K}^{pq}( h_{ijpq}+h_{lp}h_{lq}h_{ij}-h_{lp}h_{lj}h_{iq}+h_{li}h_{lq}h_{pj}-h_{li}h_{lj}h_{pq}\\
& -h_{pq}\delta_{ij}+h_{jp}\delta_{iq}-h_{qi}\delta_{pj}+h_{ij}\delta_{pq} )\\
=&\dot{K}^{pq}h_{ijpq}+KHh_{ij}-nKh_{il}h_{lj}-nK\delta_{ij}+\dot{K}^{pp}h_{ij}.
\end{aligned}
\end{equation}
It's direct to calculate
\begin{equation}\label{kpqrs}
\begin{aligned}
\ddot{K}^{pq,rs}h_{pqi}h_{rsj}=&\frac{\partial(K\tilde{h}^{pq})}{\partial h_{rs}}h_{pqi}h_{rsj}
=K\tilde{h}^{pq}\tilde{h}^{rs}h_{pqi}h_{rsj}-K\tilde{h}^{pr}\tilde{h}^{qs}h_{pqi}h_{rsj}.
\end{aligned}
\end{equation}
Hence by \eqref{theta4} at $(x_0,t_0)$,
\begin{equation}\label{theta5}
\begin{aligned}
&\nabla_j\nabla_i \Theta\\=&\phi^{\a}f\dot{K}^{pq}h_{ijpq}+\phi^{\a}fKHh_{ij}-n\phi^{\a}fKh_{il}h_{lj}-n\phi^{\a}fK\delta{ij}+\phi^{\a}f\dot{K}^{pp}h_{ij}+\phi^{\a}f\frac{\nabla_iK\nabla_jK}{K}\\
&-\phi^{\a}fK\tilde{h}^{pp}\tilde{h}^{qq}h_{pqi}h_{pqj}+\a(\a-1)\phi^{\a-2}\phi'^2fK\nabla_i\rho\nabla_j\rho +\a\phi^{\a-1}\phi'fK\nabla_j\nabla_i\rho\\
&+\phi^{\a}K\nabla_j\nabla_if
+\a\phi^{\a-1}\phi'K\nabla_i\rho\nabla_jf
+\a\phi^{\a-1}\phi'K\nabla_j\rho\nabla_if\\
&+\a\phi^{\a-1}\phi'f\nabla_i\rho\nabla_jK+\a\phi^{\a-1}\phi'f\nabla_j\rho\nabla_iK+\phi^{\a}\nabla_i f\nabla_jK+\phi^{\a}\nabla_j f\nabla_iK.
\end{aligned}
\end{equation}
Direct computation gives
\begin{equation}\label{Qij}
\begin{aligned}
\nabla_j\nabla_i Q=&\frac{\nabla_j\nabla_i\upsilon}{\upsilon}-\frac{\nabla_i\upsilon \nabla_j\upsilon}{\upsilon^2}+A\nabla_j\nabla_i\rho\\
=&- \tilde{h}^{11} \nabla_j \nabla_i h_{11} + 2 \tilde{h}^{11}\tilde{h}^{pp} \nabla_i h_{1p}\nabla_j h_{1p} -(\tilde{h}^{11})^2\nabla_i h_{11}\nabla_j h_{11} +A\nabla_j\nabla_i\rho.
\end{aligned}
\end{equation} 
Recall \eqref{evolution h} and \eqref{rho ev}. By \eqref{par t tilde h} and \eqref{theta5}, we obtain
\begin{equation}\label{Qt}
\begin{aligned}
\partial_tQ=&\frac{\partial_t\tilde{h}^{11}}{\tilde{h}^{11}}+A\partial_t\rho\\
=&-\tilde{h}^{11}\f \nabla_1\nabla^1 \Theta+\Theta h_{11}^2-\phi'\tilde{\eta}(t)h_{11}+(\tilde{\eta}(t)u-\Theta)\r-A\frac{\Theta}{w}+A\tilde{\eta}(t)\phi\\
=&-\tilde{h}^{11}[ \phi^{\a}f\dot{K}^{pq}h_{11pq}+\phi^{\a}fKHh_{11}-n\phi^{\a}fK(h_{11})^2-n\phi^{\a}fK+\phi^{\a}f\dot{K}^{pp}h_{11}\\
&+\phi^{\a}f\frac{\nabla_1K\nabla_1K}{K}
-\phi^{\a}fK\tilde{h}^{pp}\tilde{h}^{qq}(h_{pq1})^2+\a(\a-1)\phi^{\a-2}\phi'^2(\nabla_1\rho)^2fK+\a\phi^{\a-1}\phi'fK\nabla_1\nabla_1\rho\\
&+\phi^{\a}K\nabla_1\nabla_1f+2\a\phi^{\a-1}\phi'K\nabla_1\rho\nabla_1f
+2\a\phi^{\a-1}\phi'f\nabla_1\rho\nabla_1K+2\phi^{\a}\nabla_1 f\nabla_1K]\\
&-\Theta h_{11}
+\phi'\tilde{\eta}(t)-(\tilde{\eta}(t)u-\Theta)\tilde{h}^{11}-A\frac{\Theta}{w}+A\tilde{\eta}(t)\phi.
\end{aligned}
\end{equation} 
Substituting \eqref{Qij} into \eqref{Qt}, we get
\begin{equation}\label{Qt2}
\begin{aligned}
\partial_tQ=&\phi^{\a}f\dot{K}^{ij}\nabla_j\nabla_iQ-2\phi^{\a}fK\tilde{h}^{11}\tilde{h}^{ij}\tilde{h}^{pp} \nabla_i h_{1p}\nabla_j h_{1p}+\phi^{\a}fK\tilde{h}^{ij}(\tilde{h}^{11})^2\nabla_i h_{11}\nabla_j h_{11}\\
&-A\phi^{\a}f\dot{K}^{ij}\nabla_j\nabla_i\rho
-\phi^{\a}fKH+n\phi^{\a}fKh_{11}+n\phi^{\a}fK\tilde{h}^{11}-\phi^{\a}fK\tilde{h}^{pp}
-\phi^{\a}f\frac{(\nabla_1K)^2}{K}\tilde{h}^{11}\\
&+\phi^{\a}fK\tilde{h}^{11}\tilde{h}^{pp}\tilde{h}^{qq}(h_{pq1})^2
-\a(\a-1)\phi^{\a-2}\phi'^2(\nabla_1\rho)^2fK\tilde{h}^{11}-\a\phi^{\a-1}\phi'fK\tilde{h}^{11}\nabla_1\nabla_1\rho \\
&-\phi^{\a}K\tilde{h}^{11}\nabla_1\nabla_1f
-2\a\phi^{\a-1}\phi'K\tilde{h}^{11}\nabla_1\rho\nabla_1f
-2\a\phi^{\a-1}\phi'f\tilde{h}^{11}\nabla_1\rho\nabla_1K\\
&-2\phi^{\a}\tilde{h}^{11}\nabla_1 f\nabla_1K-\Theta h_{11}+\phi'\tilde{\eta}(t)-(\tilde{\eta}(t)u-\Theta)\tilde{h}^{11}-A\frac{\Theta}{w}+A\tilde{\eta}(t)\phi.
\end{aligned}
\end{equation}
Dividing \eqref{Qt2} by $\Theta$ on both sides, we obtain at $(x_0,t_0)$
\begin{equation}\label{Qt3}
\begin{aligned}
\frac{\partial_tQ}{\Theta}\leq&\tilde{h}^{ij}\nabla_j\nabla_iQ-A\tilde{h}^{ij}\nabla_j\nabla_i\rho+nh_{11}+n\tilde{h}^{11}-(\nabla_1 \log K)^2\tilde{h}^{11}
-\a(\a-1)(\frac{\phi'}{\phi})^2(\nabla_1\rho)^2\tilde{h}^{11}\\
&-\a\frac{\phi'}{\phi}\tilde{h}^{11}\nabla_1\nabla_1\rho
-\frac{\nabla_1\nabla_1 f}{f}\tilde{h}^{11}
-2\a\frac{\phi'}{\phi}\tilde{h}^{11}\nabla_1\rho\nabla_1\log f-2\a\frac{\phi'}{\phi}\tilde{h}^{11}\nabla_1\rho\nabla_1\log K\\
&-2\tilde{h}^{11}\nabla_1 \log f\nabla_1\log K+\frac{\phi'}{\Theta}\tilde{\eta}(t)+\tilde{h}^{11}+A\frac{\tilde{\eta}(t)\phi}{\Theta},
\end{aligned}
\end{equation}
where we wipe off some nonpositive terms. 
Recall \eqref{rhoij}, we have at $(x_0,t_0)$
\begin{align}\label{rho e}
|\nabla\rho|^2=\frac{\sum_{i=1}^n\metric{X_i}{V}^2}{\phi^2}=\frac{|V|^2}{\phi^2}-\frac{\metric{\nu}{V}^2}{\phi^2}\leq 1-(\frac{u}{\phi})^2<1-c_1 
\end{align}
where $c_1>0$ depends on the lower bound of $u$ and the upper bound of $\rho$.
Now substituting \eqref{rhoij} into \eqref{Qt3} and using the Cauchy-Schwartz inequality, we obtain at $(x_0,t_0)$
\begin{equation}\label{Qt5}
\begin{aligned}
\frac{\partial_tQ}{\Theta}\leq&\tilde{h}^{ij}\nabla_j\nabla_iQ-A\tilde{h}^{ii}\f\frac{\phi'}{\phi}\f 1-(\nabla_i\rho)^2\r-\frac{u h_{ii}}{\phi}\r+n h_{11}+(n+1)\tilde{h}^{11}\\
&-\a(\a-1)(\frac{\phi'}{\phi})^2(\nabla_1\rho)^2\tilde{h}^{11}
-\a\frac{\phi'}{\phi}\tilde{h}^{11}\f \frac{\phi'}{\phi} \f 1-(\nabla_1\rho)^2 \r-\frac{u h_{11}}{\phi}\r
-\frac{\nabla_1\nabla_1 f}{f}\tilde{h}^{11}\\
&-2\a\frac{\phi'}{\phi}\tilde{h}^{11}\nabla_1\rho\nabla_1\log f
+2\a^2(\frac{\phi'}{\phi})^2(\nabla_1\rho)^2\tilde{h}^{11}
+2(\nabla_1 \log f)^2\tilde{h}^{11}+\frac{\phi'}{\Theta}\tilde{\eta}(t)+A\frac{\tilde{\eta}(t)\phi}{\Theta}.
\end{aligned}
\end{equation}
Using the maximum principle, by \eqref{rho e} at $(x_0,t_0)$, we have
\begin{equation}\label{Qt4}
\begin{aligned}
\frac{\partial_tQ}{\Theta}\leq&-Ac_1\tilde{h}^{11}\frac{\phi'}{\phi}+A(\frac{nu }{\phi}+\frac{\tilde{\eta}(t)\phi}{\Theta})+nh_{11}+(n+1)\tilde{h}^{11}
-\a(\a-1)(\frac{\phi'}{\phi})^2(\nabla_1\rho)^2\tilde{h}^{11}\\
&-\a(\frac{\phi'}{\phi})^2(1-\rho_1^2)\tilde{h}^{11}+\a\frac{u\phi'}{\phi^2}-\frac{\nabla_1\nabla_1 f}{f}\tilde{h}^{11}-2\a\frac{\phi'}{\phi}\tilde{h}^{11}\nabla_1\rho\nabla_1\log f\\
&+2\a^2(\frac{\phi'}{\phi})^2(\nabla_1\rho)^2\tilde{h}^{11}+2(\nabla_1 \log f)^2\tilde{h}^{11}+\frac{\phi'}{\Theta}\tilde{\eta}(t).
\end{aligned}
\end{equation}
Write $X\in\mathbb{H}^{n+1}$ as $X=(\rho,\theta)\in \mathbb{R}^+\times\mathbb{S}^n$. Then we extend $f$ to $\f \mathbb{H}^{n+1},g_{\mathbb{H}^{n+1}} \r$ as
 \begin{align}\label{f def}
 f(X)=f(\theta).
 \end{align}
We have by Reilly formula,
\begin{align}\label{hess f}
D_jD_i f=\nabla_j\nabla_i f+h_{ij}D_{\nu} f. 
\end{align}
where $D$ is the standard Levi-Civita connection with respect to $g_{\mathbb{H}^{n+1}}$. 
Then at $(x_0,t_0)$, $\nabla_1\nabla_1f$ can be further estimated as 
\begin{align}
|\nabla_1\nabla_1f(x_0,t_0)|\leq C\f\max \rho,\min \rho,\|f\|_{C^2}(\mathbb{S}^n)\r(1+h_{11}(x_0,t_0)).
\end{align}
By \eqref{fi} and \eqref{hess f}, We obtain at $(x_0,t_0)$   
\begin{align}\label{c2 end}
0\leq\frac{\partial_tQ}{\Theta}\leq-\f\frac{Ac_1}{2}-c_2\r\tilde{h}^{11}-A\f\frac{c_1\tilde{h}^{11}}{2}-c_3\r+c_4+c_5 \frac{1}{\tilde{h}^{11}}
\end{align}
for some positive constants $c_1$, $c_2$, $c_3$, $c_4$, $c_5$ depends on the uniform upper and lower bounds of the $\rho$, $\tilde{\eta}$, $u$ from Corollary \ref{cor u}, $K$ from Lemma \ref{sk low}-\ref{sk upp} and $\a$, $n$, $\inf f$, $\|f\|_{C^2}(\mathbb{S}^n)$. We deduce from \eqref{c2 end} that if we choose $A=\frac{2c_2}{c_1}$, $\lambda(x_0,t_0)$ can't be too large, i.e., $\lambda(x_0,t_0)$ has a uniform upper bound $C(c_i,i=1,\cdots,5)$ independent of time $T_0$. Hence we have the uniform upper bound of the principal raddi of $M_t$, which means that the principal curvatures are bounded from below by some positive constant $c_6$. Meanwhile, by Lemma \ref{sk upp}, one can get
\begin{align*}
C\geq K\geq c_6^{n-1}\kappa_{\max}
\end{align*}
for some constant $C$ from Lemma \ref{sk upp}. 
 Hence, the principal curvatures are bounded from above. This completes the proof of Lemma \ref{conv-c2}.
\end{proof}

Now we have obtained the a priori estimates of the flow \eqref{flow-s}. By Lemma \ref{sec:3.1}, Lemma \ref{c1} and Lemma \ref{sk low}, these flows have short time existence. Using the $C^2$ estimate given in Lemma \ref{conv-c2}, due to \cite[Theorem 6]{And04}, we obtain the $C^{2,\lambda}$ estimate of the scalar equation \eqref{s1:flow-rn}. 
Then the standard regularity theory implies the estimates for higher order derivatives. Hence, we obtain the long time existence and regularity for these flows.
\begin{thm}\label{longexsits}
	Let $M_0$ be be as in Theorem \ref{thm1}. Assume that $f$ is a smooth positive function on $\mathbb{S}^{n}$. If $\a>n+1$ or $\a=n+1$ and $f$ satisfies $f<1$, then the smooth uniformly convex solution to the flow \eqref{flow-s} exists for all time $t \in [0, +\infty)$ and there is a constant $C_{m,\lambda}>0$ depending on $M_0$, $\a$, $n$, $m$, $\lambda$, and $f$ such that 
	\begin{align*}
	\|\rho\|_{C^{m,\lambda}_{(\theta,t)}(\mathbb{S}^n\times[0,+\infty))}\leq C_{m,\lambda}.
	\end{align*}
\end{thm}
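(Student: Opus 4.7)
My plan is to treat Theorem \ref{longexsits} as the consolidation of the a priori estimates proved in Sections \ref{sec:3}--\ref{sec:4}, combined with standard parabolic PDE machinery and a continuation argument. Since the flow \eqref{flow-s} is equivalent (up to a tangential diffeomorphism) to the scalar parabolic equation \eqref{s1:flow-rn} for the radial function $\rho$, it suffices to prove the claimed bounds for $\rho$ itself.

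First, I would establish short-time existence. Because $M_0$ is smooth and uniformly convex, the Weingarten matrix is positive definite at $t=0$, and the expressions \eqref{hij} show that the right-hand side of \eqref{s1:flow-rn} depends on $\overline{\nabla}^2\rho$ in a strictly elliptic (here, parabolic) way near the initial datum. Hence the implicit function theorem for quasilinear parabolic equations (or the standard linearization argument) yields a maximal existence time $T^{*}\in(0,\infty]$ on which $\rho(\cdot,t)$ remains smooth and $M_t$ remains uniformly convex.

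Next, on $[0,T^{*})$ I would assemble the a priori estimates proved earlier: Lemma \ref{C0-est} gives uniform two-sided bounds on $\rho$; Lemma \ref{c1} then gives uniform bounds on $|\overline{\nabla}\rho|$; Corollary \ref{cor u} gives two-sided bounds on the support function $u$ and, in particular, preserves star-shapedness; Lemmas \ref{sk low}--\ref{sk upp} and Corollary \ref{cor Phi} give two-sided bounds on $K$ and $\Theta=\phi^\a f K$; and Lemma \ref{conv-c2} gives two-sided bounds on the principal curvatures $\kappa_i$. Together these estimates imply that \eqref{s1:flow-rn}, viewed as a fully nonlinear parabolic equation for $\rho$, is uniformly parabolic with $C^2$ norm of $\rho$ bounded on $\mathbb{S}^n\times[0,T^{*})$ by a constant independent of $T^{*}$.

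With uniform parabolicity and uniform $C^2$ bounds in hand, I would invoke the $C^{2,\lambda}$ regularity theorem of Andrews \cite{And04}*{Theorem~6} (which is tailored precisely to curvature flows of convex hypersurfaces driven by a concave/appropriate function of the Weingarten matrix such as $K^{1/n}$) to obtain a uniform interior $C^{2,\lambda}_{(\theta,t)}$ estimate for $\rho$ on $\mathbb{S}^n\times[0,T^{*})$. Once a $C^{2,\lambda}$ estimate is available, the equation becomes a linear uniformly parabolic equation with $C^\lambda$ coefficients, and standard Schauder theory together with a bootstrap on the order of differentiation gives $C^{m,\lambda}$ bounds for every $m\geq 0$, depending only on $M_0$, $\a$, $n$, $m$, $\lambda$, and $f$. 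The main technical point to watch is that the Gauss curvature $K=\det(h_i{}^j)$ is not itself concave on the positive cone, so one should work with $K^{1/n}$ (which is concave) when invoking the Evans--Krylov/Andrews estimate; this is why uniform two-sided pinching of the $\kappa_i$ from Lemma \ref{conv-c2} is essential.

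Finally, a standard continuation argument closes the loop: if $T^{*}<\infty$, the uniform $C^{m,\lambda}$ bounds allow one to pass to the limit $t\uparrow T^{*}$ and obtain a smooth uniformly convex hypersurface $M_{T^{*}}$, to which the short-time existence step can be reapplied, contradicting maximality. Hence $T^{*}=\infty$, and the $C^{m,\lambda}$ bounds extend to all of $\mathbb{S}^n\times[0,\infty)$. I expect the only delicate point to be the correct invocation of Andrews' $C^{2,\lambda}$ result in the form appropriate to the speed $\phi^{\a} f K$ (rather than $K$ alone); all other steps are routine given the estimates already in the paper.
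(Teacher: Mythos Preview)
Your proposal is correct and follows essentially the same route as the paper: assemble the $C^0$, $C^1$, and $C^2$ a priori estimates from Sections \ref{sec:3}--\ref{sec:4}, invoke \cite[Theorem~6]{And04} for the $C^{2,\lambda}$ estimate, bootstrap via standard parabolic regularity, and conclude long-time existence. The paper's own argument is in fact terser than yours---it omits the explicit continuation step and the remark about working with the concave quantity $K^{1/n}$---but the logical structure is identical.
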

\begin{rem}
If the Gauss curvature $K$ in flow \eqref{flow-s} is replaced by the $k$-th mean curvature $\s_k$ of the hypersurface, using the similar argument we can obtain the same long time existence results as in Theorem \ref{longexsits}.
\end{rem}
In other words, there is a subsequence of time $\lbrace t_i\rbrace$, such that $\lbrace M_{t_i}\rbrace$ converge to a smooth positive uniformly convex hypersurface $M_{\infty}$  in $C^{\infty}$ topology. Next we shall see $M_{\infty}$ is a solution to \eqref{alex2}. 
\section{Proof of Theorem \ref{thm1}}\label{sec:5}
In this section, we consider the hyperbolic space as the hyperboloid in Lorentz space $\mathbb{R}^{n+1,1}$, where
\begin{align*}
\mathbb{H}^{n+1}=\lbrace(x_1,\cdots,x_{n+1},x_{n+2})\in\mathbb{R}^{n+1,1}|\sum_{i=1}^{n+1}x_i^2-x_{n+2}^2=-1, \ x_{n+2}>0\rbrace.
\end{align*}

Let $p=(0,\cdots,0,1)$ and $L_p$ be the tangent plane of $\mathbb{H}^{n+1}$ at $p$. Denote the projection from $\mathbb{H}^{n+1}$ to $ L_p$ as

\begin{align*}
\pi_p:\mathbb{H}^{n+1}&\rightarrow L_p\\
z=(x_1,\cdots,x_{n+1},x_{n+2})&\mapsto \frac{z}{-\metric{z}{p}}=(\frac{x_1}{x_{n+2}},\cdots,\frac{x_{n+1}}{x_{n+2}},1).
\end{align*}
Since $\sum_{i=1}^{n+1}\f \frac{x_i}{x_{n+2}}\r^2<1$ on $\pi_p(\mathbb{H}^{n+1})$, $\pi_p(\mathbb{H}^{n+1})$ is contained in the unit ball $B^{n+1}_p(1)$ centered at $p$. Recall that $\rho(q)$ is the geodesic distance between $q$ and $p$ in $\mathbb{H}^{n+1}$. If we regard $r$ as the Euclidean distance from $\pi(q)$ to $p$ in $B^{n+1}_p(1)$, we have the following relation
\begin{align}\label{proj re}
r(\theta)=\tanh\rho(\theta).
\end{align}
Direct computation gives
\begin{align}\label{cosh}
\frac{1}{(\cosh\rho)^2}=1-r^2,
\end{align}
\begin{align}\label{sinh}
\sinh\rho=\frac{r}{\sqrt{1-r^2}},
\end{align}

\begin{align}
|\overline{\nabla}\rho|^2=\frac{|\overline{\nabla} r|^2}{(1-r^2)^2},
\end{align}
\begin{align}\label{w np}
w=\sqrt{1+\frac{|\overline{\nabla} \rho|^2}{\phi^2}}=\frac{r}{\hat{u}}\sqrt{\frac{1-\hat{u}^2}{1-r^2}},
\end{align}
\begin{align}\label{u np}
u=\frac{\hat{u}}{\sqrt{1-\hat{u}^2}}.
\end{align}
Here we can see $u>0$ as long as $1>\hat{u}>0$, which means that $\pi_p(M_t)$ remains star-shaped as long as $M_t$ does. From \eqref{proj re}, we have the scalar equation of the radial graph of $\pi_p(M_t)$ under the flow \eqref{s2:flow-entire}($\tilde{\eta}\equiv 1$),
\begin{align}\label{r eu}
\partial_t r(\t,t)=\partial_{\rho}(\tanh\rho)\partial_t\rho=\frac{-w \phi(\rho)^{\a}f(\theta)K}{(\cosh\rho)^2}+\frac{\phi}{(\cosh\rho)^2}
\end{align}
where we denote by $(r(\t),\theta)$ the radial graph of $\pi_p(M_t)$. Similar to \eqref{u}-\eqref{hi_j}, we have in $\hat{M}_t=\pi(M_t)$,

\begin{align}
\hat{\nu}=\frac{r}{\sqrt{r^2+|\overline{\nabla} r|^2}}(\partial_r-\frac{\overline{\nabla} r}{r^2}),
\end{align}
\begin{align}
\hat{u}=\frac{r^2}{\sqrt{r^2+|\overline{\nabla} r|^2}},
\end{align}
\begin{align}
\hat{g}_{ij}=r_ir_j+r^2\delta_{ij}
\end{align}
and 
\begin{align}
\hat{h}_{ij}=\frac{-rr_{ij}+2r_ir_j+r^2\delta_{ij}}{\sqrt{r^2+|\overline{\nabla} r|^2}}.
\end{align}
Therefore,
\begin{align}\label{hatk1}
\hat{K}=\frac{\det{\hat{h}_{ij}}}{\det{\hat{g}_{ij}}}=\frac{\det(-rr_{ij}+2r_ir_j+r^2\delta_{ij})}{(r^2+|\overline{\nabla} r|^2)^{\frac{n+2}{2}}r^{2n-2}}.
\end{align}
Recall \eqref{gij} and \eqref{hi_j}. Substituting \eqref{proj re} in \eqref{hatk1}, we have(see \cite{CH21})
\begin{align}\label{hat k}
K=\hat{K}\f\frac{(r^2+|\overline{\nabla} r|^2)}{r^2+\frac{|\overline{\nabla} r|^2}{1-r^2}}\r^{\frac{n+2}{2}}=\hat{K}\f\frac{1-r^2}{1-\hat{u}^2}\r^{\frac{n+2}{2}}.
\end{align}

Now we calculate the scalar parabolic PDE of the support function $\hat{u}$ of $\pi_p(M_t)$. Substituting \eqref{cosh}, \eqref{sinh}, \eqref{w np} and \eqref{hat k} into \eqref{r eu}, we have
\begin{equation}\label{u eu}
\begin{aligned}
\partial_t \hat{u}(x,t)=\frac{\hat{u}}{r}\partial_t r(\theta,t)&=(- \phi(\rho)^{\a}f(\theta)Kw+\phi)(1-r^2)\frac{\hat{u}}{r}\\
&=-r^{\a}f(\theta)\hat{K}(1-r^2)^{\frac{n+3-\a}{2}}({1-\hat{u}^2})^{-\frac{n+1}{2}}+\hat{u}\sqrt{1-r^2}.
\end{aligned}
\end{equation}
Denote by $\psi(r)=r^{\a}(1-r^2)^{\frac{n+2-\a}{2}}$ and $\varphi(\hat{u})=\hat{u}^{-1}({1-\hat{u}^2})^{-\frac{n+1}{2}}$. \eqref{u eu} becomes 
\begin{align}
\partial_t \hat{u}=-\psi(r)\sqrt{1-r^2}f(\theta)\varphi(\hat{u})\hat{u}\hat{K}+\hat{u}\sqrt{1-r^2}.
\end{align}
Let $\Psi=\displaystyle\int_a^{r}\psi^{-1}(s)s^n \mathrm{d}s$, $\Omega=\displaystyle\int_a^{\hat{u}} \varphi(s) \mathrm{d}s$ and \begin{equation}\label{Q}
 Q(t)=\displaystyle\int_{\mathbb{S}^n} \Psi f^{-1} \mathrm{d}\theta_{\mathbb{S}^n}-\displaystyle\int_{\mathbb{S}^n}\Omega \mathrm{d}\sigma_{\mathbb{S}^n}.   
\end{equation} 
where $0<a<1$ some constant to be chosen later for convenience. $a=\frac{1}{2}\min\limits_{\hat{M_0}}r=\frac{1}{2}\min\limits_{\hat{M_0}}u$. We have the following properties of $Q$ along \eqref{u eu} (referring to \cites{LL20, LSW20a}).

\begin{lem}\label{Q mon}
Along \eqref{u eu}, $Q(t)$ is non-decreasing and the equality holds if and only if $\pi_p(M_t)$ satisfies the following equation
\begin{equation}\label{ell eu}
r^{\a}(1-r^2)^{\frac{n+2-\a}{2}} f\hat{K}=\hat{u} (1-\hat{u}^2)^{\frac{n+1}{2}}.
\end{equation} 

\end{lem}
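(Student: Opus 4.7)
The plan is to differentiate $Q(t)$ directly and exploit a factorization coming from the projected flow \eqref{u eu}. Introduce the normalized residual $L(\theta, t) := \psi(r)\, f(\theta)\, \varphi(\hat u)\, \hat K$; then the elliptic equation \eqref{ell eu} is exactly the pointwise identity $L \equiv 1$, while \eqref{u eu} can be rewritten as $\partial_t \hat u = \hat u\sqrt{1-r^2}\,(1-L)$. Using the paper's identity $\partial_t \hat u = (\hat u/r)\,\partial_t r$ we also obtain $\partial_t r = r\sqrt{1-r^2}\,(1-L)$. Substituting these expressions into the chain-rule formula for $\dot Q$ collapses the two antiderivatives and yields
\begin{equation*}
\dot Q(t) \;=\; \int_{\mathbb{S}^n} \sqrt{1-r^2}\,(1-L)\,\bigl[\,\psi^{-1}(r)\,r^{n+1}\,f^{-1} \,-\, \varphi(\hat u)\,\hat u\,\bigr]\,\mathrm{d}\theta_{\mathbb{S}^n}.
\end{equation*}
Whenever \eqref{ell eu} holds, $L \equiv 1$ pointwise, the integrand vanishes, and $\dot Q \equiv 0$; this already gives the easy direction of the equality statement and explains why $\Psi$ and $\Omega$ are chosen with these particular antiderivatives.

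The nontrivial task is to prove $\dot Q(t) \geq 0$ in general. Since neither $(1-L)$ nor the bracketed term has a definite sign, there is no pointwise nonnegativity. My plan is to exploit the change of variables between the radial parametrization $\theta \in \mathbb{S}^n$ and the Gauss map parametrization $\nu \in \mathbb{S}^n$ of $\pi_p(M_t)\subset\R^{n+1}$. By the $C^2$ estimates of Section \ref{sec:4} and the fact that the gnomonic projection preserves convexity, $\pi_p(M_t)$ is uniformly convex, so its Gauss map is a diffeomorphism and the Euclidean area element satisfies $\mathrm{d}\mu = (r^{n+1}/\hat u)\,\mathrm{d}\theta_{\mathbb{S}^n} = \hat K^{-1}\,\mathrm{d}\sigma_{\mathbb{S}^n}$. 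Using this identity, the $L$-piece of the integrand (which carries the $\hat K$ factor) transfers to the Gauss sphere, placing it on equal footing with the $1$-piece, after which the two contributions should combine through a Cauchy--Schwarz or Jensen-type inequality applied to the positive weights $\psi^{-1}(r)\,r^{n+1}\,f^{-1}$ and $\varphi(\hat u)\,\hat u$.

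The hard part will be executing this rearrangement cleanly and isolating the correct geometric inequality; this is exactly the place where convexity and the diffeomorphism property of the Gauss map are used in an essential way. Once $\dot Q \geq 0$ is established, the equality case of the underlying inequality should force $L$ to be a positive constant on $\mathbb{S}^n$, and the structural normalizations built into $\psi(r)$ and $\varphi(\hat u)$ then pin that constant to be exactly $1$, recovering \eqref{ell eu} and completing the equality characterization.
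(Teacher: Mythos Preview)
Your displayed formula for $\dot Q$ is incorrect: you have conflated the two measures in the definition \eqref{Q}. The $\Psi$-integral there is taken against $\mathrm{d}\theta_{\mathbb{S}^n}$ (radial parametrization), but the $\Omega$-integral is taken against $\mathrm{d}\sigma_{\mathbb{S}^n}$ (Gauss-map parametrization). Your formula would follow only if both were $\mathrm{d}\theta_{\mathbb{S}^n}$; the deliberate mismatch of measures is exactly what makes the monotonicity work, and it is why you end up thinking an integral inequality is needed.

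Once you correct this, the argument is far simpler than you anticipate. Perform the change of variables $r^{n+1}\,\mathrm{d}\theta_{\mathbb{S}^n} = (\hat u/\hat K)\,\mathrm{d}\sigma_{\mathbb{S}^n}$ on the $\Psi$-term \emph{first}, together with $\partial_t r/r=\partial_t\hat u/\hat u$, to obtain
\[
\dot Q(t)=\int_{\mathbb{S}^n}\bigl(\psi^{-1}f^{-1}\hat K^{-1}-\varphi(\hat u)\bigr)\,\partial_t\hat u\;\mathrm{d}\sigma_{\mathbb{S}^n}.
\]
Now the projected flow itself factors as $\partial_t\hat u=\psi f\hat K\,\hat u\sqrt{1-r^2}\,\bigl(\psi^{-1}f^{-1}\hat K^{-1}-\varphi(\hat u)\bigr)$, so the integrand is a pointwise perfect square:
\[
\dot Q(t)=\int_{\mathbb{S}^n}\bigl(\psi^{-1}f^{-1}\hat K^{-1}-\varphi(\hat u)\bigr)^{2}\,\psi f\hat K\,\hat u\sqrt{1-r^2}\;\mathrm{d}\sigma_{\mathbb{S}^n}\;\ge\;0.
\]
In your notation, since $\psi^{-1}f^{-1}\hat K^{-1}-\varphi=\varphi(1-L)/L$, this reads $\dot Q=\int_{\mathbb{S}^n}\varphi\,\hat u\sqrt{1-r^2}\,L^{-1}(1-L)^{2}\,\mathrm{d}\sigma_{\mathbb{S}^n}$. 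No Cauchy--Schwarz or Jensen is required, and equality forces $L\equiv 1$ pointwise, which is precisely \eqref{ell eu}; there is no intermediate step in which $L$ is merely constant and then normalized. The ``hard part'' you describe disappears once the measures are handled correctly; this is the paper's computation.
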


\begin{proof}
\begin{equation}\label{Q1}
\begin{aligned}
Q'(t)=&\partial_t\displaystyle\int_{\mathbb{S}^n} \Psi f^{-1} \mathrm{d}\theta_{\mathbb{S}^n}-\partial_t\displaystyle\int_{\mathbb{S}^n}\Omega \mathrm{d}\sigma_{\mathbb{S}^n}\\
=&\displaystyle\int_{\mathbb{S}^n}\psi^{-1}(r)r^n\partial_t r f^{-1} \mathrm{d}\theta_{\mathbb{S}^n}-\displaystyle\int_{\mathbb{S}^n}\varphi(\hat{u})\partial_t \hat{u} \mathrm{d}\sigma_{\mathbb{S}^n}.
\end{aligned}
\end{equation}
Since $\frac{\partial_t r(\theta,t)}{r}=\frac{\partial_t \hat{u}(x,t)}{\hat{u}}$ and $r^{n+1}\mathrm{d}\theta_{\mathbb{S}^n}=\frac{\hat{u}}{\hat{K}} \mathrm{d}\sigma_{\mathbb{S}^n}$, \eqref{Q1} becomes 
\begin{equation}
\begin{aligned}
Q'(t)=&\displaystyle\int_{\mathbb{S}^n}\f \psi^{-1}(r)f^{-1}(\theta)\hat{K}^{-1}-\varphi(\hat{u})\r\partial_t \hat{u} \mathrm{d}\sigma_{\mathbb{S}^n}\\
=&\displaystyle\int_{\mathbb{S}^n}\f \psi^{-1}(r)f^{-1}(\theta)\hat{K}^{-1}-\varphi(\hat{u})\r^2\psi(r)\sqrt{1-r^2}f(\theta)\hat{u}\hat{K}\mathrm{d}\sigma_{\mathbb{S}^n}\\
\geq&0.
\end{aligned}
\end{equation}
Here the equality holds if and only if $\varphi(r)\psi(\hat{u})f(\theta)\hat{K}\equiv 1$.
\end{proof}
By Lemma \ref{sec:3.1}, we obtain the uniform upper and lower bounds of $\rho$ in the hyperbolic space. Thus from \eqref{proj re}, there exists $0<c_1<c_2<1$ independent of time, such that $c_1<r<c_2$ and $c_1<\hat{u}<c_2$. From \eqref{Q}, $Q(t)$ is uniformly bounded independent of time, i.e., there exists a constant $C>0$, such that $|Q(t)|\leq C$. 
Since
\begin{equation}
\begin{aligned}
Q(t)-Q(0)
=\displaystyle\int_0^t\displaystyle\int_{\mathbb{S}^n}\f \psi^{-1}(r)f^{-1}(\theta)\hat{K}^{-1}-\varphi(\hat{u})\r^2\psi(r)\sqrt{1-r^2}f(\theta)\hat{u}\hat{K}\mathrm{d}\sigma_{\mathbb{S}^n}dt,
\end{aligned}
\end{equation}
we obtain
\begin{align}
\displaystyle\int_0^{\infty}\displaystyle\int_{\mathbb{S}^n}\f \psi^{-1}(r)f^{-1}(\theta)\hat{K}^{-1}-\varphi(\hat{u})\r^2\psi(r)\sqrt{1-r^2}f(\theta)\hat{u}\hat{K}\mathrm{d}\sigma_{\mathbb{S}^n}dt<\infty.
\end{align}
By Theorem \ref{longexsits}, there exists a subsequence $\lbrace t_i\rbrace$, such that $M_{t_i}$ converge smoothly to $M_{\infty}$ and
\begin{align*}
\displaystyle\int_{\mathbb{S}^n}\f \psi^{-1}(r)f^{-1}(\theta)\hat{K}^{-1}-\varphi(\hat{u})\r^2\mathrm{d}\sigma_{\mathbb{S}^n}\rightarrow 0
\end{align*}
as $t_i\rightarrow\infty$, since $\hat{K}$ is uniformly bounded from below by \eqref{hat k} and Lemma \ref{sk low}.
Thus $\hat{M}_{\infty}$ satisfies
\begin{align}
 \psi(r)f(\theta)\hat{K}=\varphi(\hat{u})^{-1}.
\end{align}
Combining with \eqref{proj re}, \eqref{cosh}, \eqref{sinh} etc., we obtain $M_{\infty}$ satisfies \eqref{alex2}.

Now we show the uniqueness of the solution of \eqref{alex2} when $\a\geq n+1$. First we assume that there exists two solutions $\rho_1$ and $\rho_2$. Let $\gamma(\rho)=\log(1-\frac{2}{e^{\rho}+1})$ and $G=\gamma_1-\gamma_2$, then $\overline{\nabla}\gamma=\frac{\overline{\nabla} \rho}{\phi}$. Assume $G$ attains its maximal point at $\theta_0$ and $G(\theta_0)>0$, which implies $\rho_1(\theta_0)>\rho_2(\theta_0)$. At $\theta_0$, we have 
\begin{align}\label{cri uq}
\overline{\nabla}\gamma_1=\overline{\nabla}\gamma_2
\end{align}
and $\overline{\nabla}^2_{ij}G\leq 0$, i.e., 	
\begin{align}\label{eta2}
\overline{\nabla}^2_{ij}\gamma_1\leq\overline{\nabla}^2_{ij}\gamma_2.
\end{align}
Substituting $\gamma$ into \eqref{gij} and \eqref{hij}, we have  
\begin{align}\label{gij eta}
g_{ij}=\phi^2\f \delta^{ij}+\gamma_i\gamma_j\r, \quad g^{kj}=\frac{1}{\phi^2}\f \delta^{kj}-\frac{\gamma_k\gamma_j}{1+|\overline{\nabla}\gamma|^2}\r
\end{align}
and 
\begin{align}\label{hij eta}
h_{ik}=\frac{\phi}{\sqrt{1+|\overline{\nabla}\gamma|^2}}\f-\gamma_{ik}+\phi'\gamma_i\gamma_k+\phi'\delta_{ik}\r.
\end{align}
Plugging \eqref{gij eta} and \eqref{hij eta} in  \eqref{alex2} we have
\begin{align}
\sigma_n(h_i{}^j)=\frac{f}{\phi(\rho)^{\a-1}\sqrt{1+|\overline{\nabla}\gamma|^2}},
\end{align}
where
\begin{align}
h_i{}^j=\frac{1}{\phi\sqrt{1+|\overline{\nabla}\gamma|^2}}\f-\gamma_{ik}+\phi'\gamma_i\gamma_k+\phi'\delta_{ik}\r\f \delta^{kj}-\frac{\gamma_k\gamma_j}{1+|\overline{\nabla}\gamma|^2}\r.
\end{align}
Then 
\begin{align}
\sigma_n\f \frac{1}{\sqrt{1+|\overline{\nabla}\gamma|^2}}\f-\gamma_{ik}+\phi'\gamma_i\gamma_k+\phi'\delta_{ik}\r\f \delta^{kj}-\frac{\gamma_k\gamma_j}{1+|\overline{\nabla}\gamma|^2}\r \r=\frac{f}{\phi(\rho)^{\a-n-1}\sqrt{1+|\overline{\nabla}\gamma|^2}}.
\end{align}
Using \eqref{cri uq}, we have at $\theta$
\begin{align}\label{eq}
&\phi(\rho_1)^{\a-n-1}\sigma_n\f -(\gamma_1){}_{ik}+\phi(\rho_1)'(\gamma_1){}_{i}(\gamma_1){}_{k}+\phi(\rho_1)'\delta_{ik} \r\nonumber\\
=&\phi(\rho_2)^{\a-n-1}\sigma_n\f -(\gamma_2){}_{ik}+\phi(\rho_2)'(\gamma_2){}_{i}(\gamma_2){}_{k}+\phi(\rho_2)'\delta_{ik} \r.
\end{align}
Since $\rho_1(\theta)>\rho_2(\theta)$ and both of the solutions are uniformly convex, by \eqref{cri uq} and \eqref{eta2}
\begin{align}
-(\gamma_1){}_{ik}+\phi'(\rho_1)(\gamma_1){}_{i}(\gamma_1){}_{k}+\phi'(\rho_1)\delta_{ik}>-(\gamma_2){}_{ik}+\phi(\rho_2)'(\gamma_2){}_{i}(\gamma_2){}_{k}+\phi(\rho_2)'\delta_{ik}>0.
\end{align}
When $\a\geq n+1$, we have
\begin{align*}
&\phi(\rho_1)^{\a-n-1}\sigma_n\f -(\gamma_1){}_{ik}+\phi(\rho_1)'(\gamma_1){}_{i}(\gamma_1){}_{k}+\phi(\rho_1)'\delta_{ik} \r\\
>&\phi(\rho_2)^{\a-n-1}\sigma_n\f -(\gamma_2){}_{ik}+\phi(\rho_2)'(\gamma_2){}_{i}(\gamma_2){}_{k}+\phi(\rho_2)'\delta_{ik} \r
\end{align*}
which is contrary to \eqref{eq}. So $G\leq 0$, which implies $\gamma_1\equiv\gamma_2$ and $\rho_1\equiv\rho_2$.
Now we complete the proof of Theorem \ref{thm1}.

\section{Proof of Theorem \ref{thm2}}\label{sec:6}
In this section, we consider the flow \eqref{flow-s} for $\a=n+1$ and the corresponding even Alexandrov problem in $\mathbb{H}^{n+1}$. Throughout this section, if not specified, we regard $c_i$, $C_i$ for $i\in\mathbb{N}$ as some positive constants. 
By \eqref{s1:flow-rn}, when $\a=n+1$ the scalar equation of the radial function $\rho$ becomes 
\begin{equation}\label{flow-r n+1}
	\left\{\begin{aligned}
	\frac{\partial}{\partial t}\rho(\theta,t)=&-\f\phi(\rho)\r^{n+1}f(\theta)wK +\phi(\theta,t),\quad \text{for }(\theta,t)\in\mathbb{S}^n\times[0,+\infty),\\
	\rho(\cdot,0)=& \rho_0(\cdot).
	\end{aligned}\right.
	\end{equation}
By the projection $\pi_p$ in Section \ref{sec:5} and the relation between $M_t$ and $\pi_p(M_t)$, \eqref{proj re}-\eqref{u np} and \eqref{hat k}, we obtain the scalar parabolic PDE of the radial function $r$ of $\pi_p(M_t)$ along the flow \eqref{flow-s}
\begin{align}\label{flow s n+1}
\partial_t r=-r^{n+2}(1-r^2)\hat{u}^{-1}(1-\hat{u}^2)^{-\frac{n+1}{2}}f\hat{K}+r\sqrt{1-r^2}.
\end{align}

Then we have the following results.
\begin{lem}\label{thm2 c0}
Let $\rho$ be a smooth, positive, uniformly convex and origin-symmetric solution to \eqref{flow-r n+1} on $\mathbb{S}^n\times[0,T)$. If $f$ is a smooth positive even function on $\mathbb{S}^n$ satisfying $\int_{\mathbb{S}^n} f^{-1}\mathrm{d}\theta_{\mathbb{S}^n}>|S^n|$, then there exists a positive constant $C$ depending on $n$, $\max f$, $\min f$ and the initial hypersurface, such that
\begin{align}
\frac{1}{C}\leq \rho\leq C,\quad \forall t\in[0,T).
\end{align}
\end{lem}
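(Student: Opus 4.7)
My plan is to derive the upper and lower bounds on $\rho$ separately. The upper bound follows the maximum-principle argument of Lemma \ref{C0-est}: at a spatial maximum of $\rho(\cdot,t)$ one has $\bar{\nabla}\rho=0$ and $\bar{\nabla}^2\rho\le 0$, so \eqref{hij} gives $h_i{}^j\ge(\phi'/\phi)\delta_i{}^j$ and hence $K\ge(\phi'/\phi)^n$. Plugging this into \eqref{flow-r n+1} yields $\partial_t\rho_{\max}\le\phi(1-(\phi')^n f)$, which is strictly negative once $\rho$ is large since $\phi'=\cosh\rho\to\infty$ and $\min f>0$. This gives $\rho\le C_+$ and, under the Klein projection of Section \ref{sec:5}, $r=\tanh\rho\le R:=\tanh C_+<1$.

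For the lower bound the pointwise condition $f<1$ used in Lemma \ref{C0-est} is unavailable, and the proof becomes genuinely global. The plan is to combine (i) the preservation of origin-symmetry of $M_t$ along the flow (thanks to the evenness of $f$ and of $M_0$), (ii) the monotonicity $Q(t)\ge Q(0)$ from Lemma \ref{Q mon}, and (iii) the integral hypothesis $\int_{\mathbb{S}^n}f^{-1}\,d\theta>|\mathbb{S}^n|$. In the projection $\hat{M}_t=\pi_p(M_t)$, which is a smooth origin-symmetric Euclidean convex body in the open unit ball, at a direction $\theta_m$ where the radial function $r$ is minimized one has $\bar{\nabla}r(\theta_m)=0$, so the outward normal at $r_m\theta_m$ is $\theta_m$ itself; consequently $\hat{u}(\pm\theta_m)=r_m$ and $\hat{M}_t\subset\{|\langle x,\theta_m\rangle|\le r_m\}$. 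The slab containment yields $r(\theta)\le r_m/|\langle\theta,\theta_m\rangle|$, so in particular $r\le 2r_m$ on the symmetric polar cap $E:=\{|\langle\theta,\theta_m\rangle|\ge 1/2\}$, which has a dimension-dependent positive measure. Meanwhile, star-shapedness gives the pointwise inequality $\hat{u}\ge r$ on $\mathbb{S}^n$, which together with monotonicity of $\Omega$ upgrades $Q(t)\ge Q(0)$ to
\[
\int_{\mathbb{S}^n}\bigl(\Psi(r)f^{-1}(\theta)-\Omega(r)\bigr)\,d\theta\ge Q(0).
\]
Using the asymptotics $\Psi(r)=\log r+O(1)$ and $\Psi(r)-\Omega(r)=O(1)$ as $r\to 0^+$, the integrand equals $(f^{-1}(\theta)-1)\log r(\theta)+O(1)$ uniformly, so a lower bound $\int_E(f^{-1}-1)\,d\theta\ge\delta_0>0$ would force the contribution from $E$ to tend to $-\infty$ as $r_m\to 0$ and give the desired contradiction.

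The main obstacle is producing such a $\delta_0$ uniformly in the (possibly time-varying) minimizer $\theta_m$: the cap $E$ moves with $\theta_m$ while $f$ is fixed, so the required local inequality is not an immediate consequence of the global condition $\int f^{-1}>|\mathbb{S}^n|$. I expect this step to be handled by a symmetrization / rotational-averaging argument using the evenness of $f$ (so that both $E$ and $\{f^{-1}>1\}$ are symmetric), possibly combined with a more refined test set adapted to $f$ rather than the crude cap $E$; this is precisely where the integral hypothesis enters, and it is what makes this $C^0$ estimate ``delicate'' in the sense of the paper. Carrying out this step cleanly, without invoking the higher-order a priori estimates of Section \ref{sec:4}, is the technical heart of the argument.
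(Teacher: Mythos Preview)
Your upper-bound argument is correct and matches the paper. For the lower bound you have assembled the right ingredients, but you have stalled at an artificial obstacle. The local inequality $\int_E(f^{-1}-1)\,d\theta\ge\delta_0>0$ that you hope for is simply \emph{false} in general: nothing prevents $f^{-1}<1$ on a symmetric cap around the time-varying direction $\theta_m$, so no rotational-averaging trick will produce it. The cap restriction is however unnecessary. The slab containment $\hat M_t\subset\{|\langle x,\theta_m\rangle|\le r_m\}$ gives $r(\theta)\le r_m/|\langle\theta,\theta_m\rangle|$ on the \emph{entire} sphere, and combined with $r\ge r_m$ yields
\[
0\ \le\ \log r(\theta)-\log r_m\ \le\ -\log|\langle\theta,\theta_m\rangle|,
\]
whose right-hand side is integrable on $\mathbb{S}^n$ with a bound depending only on $n$ (this is exactly the computation in \eqref{thm2 logt}). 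Using your correct step $\hat u\ge r$ (valid because the convex body contains the origin, so $\hat u(\xi)\ge\langle r(\xi)\xi,\xi\rangle=r(\xi)$; ``star-shapedness'' alone is not the reason), together with the asymptotics $\Psi(r)=\log r+O(1)$ and $\Psi(r)-\Omega(r)=O(1)$ on $(0,R]$, one obtains directly
\[
Q(0)\ \le\ \int_{\mathbb{S}^n}\bigl(\Psi(r)f^{-1}-\Omega(r)\bigr)\,d\theta
\ =\ (\log r_m)\Bigl(\int_{\mathbb{S}^n}f^{-1}\,d\theta-|\mathbb{S}^n|\Bigr)+O(1),
\]
which tends to $-\infty$ as $r_m\to0$ by the hypothesis $\int f^{-1}>|\mathbb{S}^n|$. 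Thus the integral condition enters \emph{globally}, not through any cap, and the contradiction closes your argument.

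Completed this way, your route is genuinely different from the paper's and somewhat shorter. The paper never compares $\hat u$ with $r$ pointwise; instead it first combines the monotonicity of $Q$ with the two bounds $r(\theta)\le r_{\min}/|\cos\theta_1|$ and $\hat u(v)\ge\hat u_{\max}|\cos v_1|$ to obtain a pinching relation $r_{\min}\ge c_3\,r_{\max}^{c_4}$ (see \eqref{min-mix r}), and then introduces a \emph{second} integral quantity $\int_{\mathbb{S}^n}\!\int_{c_2}^{r}s^{-1}(1-s^2)^{-1/2}f^{-1}\,ds\,d\theta$, computing its time derivative along the flow. The hypothesis $\int f^{-1}>|\mathbb{S}^n|$ is used there to show this derivative is strictly positive whenever $\hat u_{\max}$ is small, forcing a lower bound on $r_{\max}$; the pinching then transfers this to $r_{\min}$. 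Your approach bypasses both the pinching step and the second monotone functional, at the cost of the pointwise inequality $\hat u\ge r$ and the global $L^1$ control of $\log r-\log r_m$.
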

\begin{proof}
The upper bound of $\rho$ is obtained directly from the proof of Lemma \ref{C0-est}, i.e., there exists a positive constant $c_1$, such that $\rho\leq c_1$. Combining with \eqref{sinh}, we obtain that the radial function $r$ of $\pi_p(M_t)$ is away from $1$, i.e., there exists a positive constant $c_2$, such that $\hat{u}\leq r\leq c_2<1$. Under the flow \eqref{flow s n+1}, we have the following monotone non-decreasing function by Lemma \ref{Q mon}
\begin{align}
Q(t)=\displaystyle\int_{\mathbb{S}^n}\int_a^r s^{-1}(1-s^2)^{-\frac{1}{2}} f^{-1} \mathrm{d}s \mathrm{d}\theta_{\mathbb{S}^n}-\displaystyle\int_{\mathbb{S}^n}\int_a^{\hat{u}}s^{-1}(1-s^2)^{-\frac{n+1}{2}}\mathrm{d}s \mathrm{d}\sigma_{\mathbb{S}^n}.
\end{align}
where we can choose $a=c_2$ without loss of generality. Then there exists a positive constant $C_1$ depending on the initial hypersurface of \eqref{flow-s}, such that
\begin{equation}\label{thm2 Q}
\begin{aligned}
-C_1\leq Q(t)\leq &-f_{\max}^{-1}\displaystyle\int_{\mathbb{S}^n}\int_r^{c_2} s^{-1}\mathrm{d}s \mathrm{d}\theta_{\mathbb{S}^n}
+(1-c_2^2)^{-\frac{n+1}{2}}\displaystyle\int_{\mathbb{S}^n}\int_{\hat{u}}^{c_2} s^{-1}\mathrm{d}s \mathrm{d}\sigma_{\mathbb{S}^n}\\
= & -f_{\max}^{-1}\displaystyle\int_{\mathbb{S}^n}(\log c_2-\log r) \mathrm{d}\theta_{\mathbb{S}^n}+(1-c_2^2)^{-\frac{n+1}{2}}\displaystyle\int_{\mathbb{S}^n}(\log c_2-\log \hat{u}) \mathrm{d}\sigma_{\mathbb{S}^n}\\
\leq & C_2+f_{\max}^{-1}\displaystyle\int_{\mathbb{S}^n}\log r \ \mathrm{d}\theta_{\mathbb{S}^n}-(1-c_2^2)^{-\frac{n+1}{2}}\displaystyle\int_{\mathbb{S}^n}\log \hat{u} \  \mathrm{d}\sigma_{\mathbb{S}^n}.
\end{aligned}
\end{equation}
Assume $r_{\min}(t)$ is attained at $\theta_0=(1,\vec{0})$ at any fixed time $t$. Here we define by $\vec{0}$ an $n$-dimensional zero vector. We parametrize any point $\theta\in\mathbb{S}^n$ as 
\begin{align}\label{theta para}
\theta=(\cos \theta_1,\sin \theta_1\vec{x}),
\end{align}
where $0\leq\theta_1\leq \pi$, and $\vec{x}=(x_2,\cdots,x_{n+1})\in \mathbb{S}^{n-1}$ is an $n$-dimensional unit vector. We have $\metric{\theta}{\theta_0}=\cos \theta_1$. Then $r(\theta,t)\leq \frac{r_{\min}(t)}{|\cos \theta_1|}$ because the flow hypersurface $M_t$ is strictly convex and origin-symmetric.
Assume $\hat{u}_{\max}(t)$ is attained at $v_0\in \mathbb{S}^n$. We have $\hat{u}(v,t) \geq \hat{u}_{\max}(t)|\metric{v}{v_0}|$ (referring to \cite[p.44]{Sch13}). Take the direction of $v_0$ as $x$-axis and regard $v_1$ as the angle with the $x$-axis. We parametrize any point $v$ in $\mathbb{S}^n$ as 
\begin{align}\label{nu para}
v=\f\cos v_1,\sin v_1\vec{y}\r,
\end{align}
where $0\leq v_1\leq \pi$, and $\vec{y}=(y_2,\cdots,y_{n+1})\in \mathbb{S}^{n-1}$ is an $n$-dimensional unit vector. We have $\metric{v}{v_0}=\cos v_1$. By a direct calculation, the area measure $\mathrm{d}\sigma_{\mathbb{S}^n}$ becomes
\begin{align}\label{area measure v}
\mathrm{d}\sigma_{\mathbb{S}^n}=(\sin v_1)^{n-1} \mathrm{d}v_1\mathrm{d}\sigma_{\mathbb{S}^{n-1}}.
\end{align}
Similarly,
\begin{align}\label{area measure t}
\mathrm{d}\t_{\mathbb{S}^n}=(\sin \t_1)^{n-1} \mathrm{d}\t_1\mathrm{d}\t_{\mathbb{S}^{n-1}}.
\end{align}
Then
\begin{equation}\label{thm2 logv}
\begin{aligned}
\displaystyle\int\limits_{\mathbb{S}^n}\log \hat{u} \mathrm{d}\sigma_{\mathbb{S}^n}
\geq & \int\limits_{\mathbb{S}^n}\log(\hat{u}_{\max}|\cos v_1|) \mathrm{d}\sigma_{\mathbb{S}^n}\\
=&2\int_0^{\frac{\pi}{2}}\int_{\mathbb{S}^{n-1}}\f \log\hat{u}_{\max}+\log\cos v_1\r (\sin v_1)^{n-1} \mathrm{d}v_1\mathrm{d}\sigma_{\mathbb{S}^{n-1}}\\
\geq& |S^n|\log\hat{u}_{\max}+2|S^{n-1}|\int_0^{\frac{\pi}{2}}\log\cos v_1\mathrm{d}v_1
\geq|S^n|\log\hat{u}_{\max}-C_3.
\end{aligned}
\end{equation} 
The second term in the last inequality is convergent, since $\log (\cos v_1)\geq\log \f \frac{\pi}{4}-\frac{v_1}{2}\r$ at $v_1\in[0,\frac{\pi}{2}]$.
Besides, we have 
\begin{equation}\label{thm2 logt}
\begin{aligned}
\displaystyle\int\limits_{\mathbb{S}^n}\log r \mathrm{d}\theta_{\mathbb{S}^n}
\leq &\int\limits_{\mathbb{S}^n}\log \frac{r_{\min}}{|\cos \theta_1|} \mathrm{d}\theta_{\mathbb{S}^n}\\
=&2\int_0^{\frac{\pi}{2}}\int_{\mathbb{S}^{n-1}}\f\log r_{\min}-\log \cos \theta_1\r (\sin \theta_1)^{n-1}\mathrm{d}\theta_1\mathrm{d}\theta_{\mathbb{S}^{n-1}}\\
\leq&|S^n|\log r_{\min}-2|S^{n-1}|\int_0^{\frac{\pi}{2}}\log\cos \theta_1 \mathrm{d}\theta_1
\leq |S^n|\log r_{\min}+C_3,
\end{aligned}
\end{equation}
where the second term in the last inequality is convergent for the same reason as \eqref{thm2 logv}.
Plugging \eqref{thm2 logv} and \eqref{thm2 logt} in \eqref{thm2 Q}, we obtain
\begin{equation}
\begin{aligned}
-C_1\leq C_4+f_{\max}^{-1}|S^n|\log r_{\min}-(1-c_2^2)^{-\frac{n+1}{2}}|S^n|\log\hat{u}_{\max}.
\end{aligned}
\end{equation}
Hence, there exists positive constants $c_3$ and $c_4$, such that 
\begin{align}\label{min-mix r}
r_{\min}\geq c_3 r_{\max}^{c_4},
\end{align}
where $c_4=f_{\max}(1-c_2^2)^{-\frac{n+1}{2}}$.\\
Dividing the both sides of \eqref{flow s n+1} by $r\sqrt{1-r^2}f$, we have
\begin{align}\label{thm2 r1}
\frac{\partial_t r}{r\sqrt{1-r^2}f} =-r^{n+1}\sqrt{1-r^2}\hat{u}^{-1}(1-\hat{u}^2)^{-\frac{n+1}{2}}\hat{K}+f^{-1}(\t).
\end{align}
Integrating \eqref{thm2 r1} over $\mathbb{S}^n$, we get
\begin{equation}\label{thm2 r2}
\begin{aligned}
\int_{\mathbb{S}^n}\frac{\partial_t r}{r\sqrt{1-r^2}f}\mathrm{d}\t_{\mathbb{S}^n} =&-\int_{\mathbb{S}^n}r^{n+1}\sqrt{1-r^2}\hat{u}^{-1}(1-\hat{u}^2)^{-\frac{n+1}{2}}\hat{K}\mathrm{d}\t_{\mathbb{S}^n} +\int_{\mathbb{S}^n}f^{-1}(\t)\mathrm{d}\t_{\mathbb{S}^n}\\
=& -\int_{\mathbb{S}^n}\sqrt{1-r^2}(1-\hat{u}^2)^{-\frac{n+1}{2}}\mathrm{d}\s_{\mathbb{S}^n} +\int_{\mathbb{S}^n}f^{-1}(\t)\mathrm{d}\t_{\mathbb{S}^n},
\end{aligned}
\end{equation}
where we use $r^{n+1}d\t_{\mathbb{S}^n}=\frac{\hat{u}}{\hat{K}}d\s_{\mathbb{S}^n}$ in the last equality.
Note that the left hand side of \eqref{thm2 r2} is
\begin{align}\label{thm2 r3}
\int_{\mathbb{S}^n}\frac{\partial_t r}{r\sqrt{1-r^2}f}\mathrm{d}\t_{\mathbb{S}^n}=\partial_t\displaystyle\int_{\mathbb{S}^n}\int_{c_2}^r s^{-1}(1-s^2)^{-\frac{1}{2}} f^{-1} \mathrm{d}s \mathrm{d}\theta_{\mathbb{S}^n}.
\end{align}
Besides, we have
\begin{equation}\label{2 r5}
\begin{aligned}
&\displaystyle\int_{\mathbb{S}^n}\int_{c_2}^r s^{-1}(1-s^2)^{-\frac{1}{2}} f^{-1} \mathrm{d}s \mathrm{d}\theta_{\mathbb{S}^n}\\
\leq& -f_{\max}^{-1}\displaystyle\int_{\mathbb{S}^n}\int_r^{c_2} s^{-1}\mathrm{d}s \mathrm{d}\theta_{\mathbb{S}^n}\\
=&-|S^n|f_{\max}^{-1}\log c_2+f_{\max}^{-1}\displaystyle\int_{\mathbb{S}^n}\log r \mathrm{d}\theta_{\mathbb{S}^n}\\
\leq&C_{5}+|S^n|f_{\max}^{-1}\log r_{\max} 
\end{aligned}
\end{equation}
and
\begin{equation}\label{2 r6}
\begin{aligned}
&\displaystyle\int_{\mathbb{S}^n}\int_{c_2}^r s^{-1}(1-s^2)^{-\frac{1}{2}} f^{-1} \mathrm{d}s \mathrm{d}\theta_{\mathbb{S}^n}\\
\geq &-\f 1-c_2^2\r^{-\frac{1}{2}}f_{\min}^{-1}\displaystyle\int_{\mathbb{S}^n}\int_r^{c_2} s^{-1}\mathrm{d}s \mathrm{d}\theta_{\mathbb{S}^n}\\
=&-\f 1-c_2^2\r^{-\frac{1}{2}}|S^n|f_{\min}^{-1}\log c_2+\f 1-c_2^2\r^{-\frac{1}{2}}f_{\min}^{-1}\int_{\mathbb{S}^n}\log r \mathrm{d}\theta_{\mathbb{S}^n}\\
\geq &\f 1-c_2^2\r^{-\frac{1}{2}}|S^n|f_{\min}^{-1}\log r_{\min}.
\end{aligned}
\end{equation}
Since $r(\t(v),t)=\sqrt{\hat{u}(v)^2+|\nabla \hat{u}(v)|^2}\geq\hat{u}(v,t)$ on $\mathbb{S}^n$, we have
\begin{align}\label{thm2 r4}
\int_{\mathbb{S}^n}\sqrt{1-r^2}(1-\hat{u}^2)^{-\frac{n+1}{2}}\mathrm{d}\s_{\mathbb{S}^n}\leq \int_{\mathbb{S}^n}(1-\hat{u}^2)^{-\frac{n}{2}}\mathrm{d}\s_{\mathbb{S}^n}\leq (1-\hat{u}_{\max}^2)^{-\frac{n}{2}}|S^n|.
\end{align}
When $\hat{u}_{\max}\rightarrow 0$, $(1-\hat{u}_{\max}^2)^{-\frac{n}{2}}|S^n|\rightarrow|S^n|$. By \eqref{thm2 r4} and the condition of $f$, there exists a positive constant $c_5$, such that when $u_{\max}\leq c_5$, $\int_{\mathbb{S}^n}\sqrt{1-r^2}(1-\hat{u}^2)^{-\frac{n+1}{2}}\mathrm{d}\s_{\mathbb{S}^n}\leq \frac{1}{2}(1+\int_{\mathbb{S}^n}f^{-1}(\t)\mathrm{d}\t_{\mathbb{S}^n})< \int_{\mathbb{S}^n}f^{-1}(\t)\mathrm{d}\t_{\mathbb{S}^n}$. This together with \eqref{thm2 r2} and \eqref{thm2 r3} implies
\begin{align}\label{2 r7}
\partial_t\int_{\mathbb{S}^n}\int_{c_2}^r s^{-1}(1-s^2)^{-\frac{1}{2}} f^{-1} \mathrm{d}s \mathrm{d}\theta_{\mathbb{S}^n} =-\int_{\mathbb{S}^n}\sqrt{1-r^2}(1-\hat{u}^2)^{-\frac{n+1}{2}}\mathrm{d}\s_{\mathbb{S}^n} +\int_{\mathbb{S}^n}f^{-1}(\t)\mathrm{d}\t_{\mathbb{S}^n}>0.
\end{align}
When $\hat{u}_{\max}=c_5$, inserting \eqref{min-mix r} into \eqref{2 r6}, we have 
\begin{equation}\label{2 r8}
\begin{aligned}
\displaystyle\int_{\mathbb{S}^n}\int_{c_2}^r s^{-1}(1-s^2)^{-\frac{1}{2}} f^{-1} \mathrm{d}s \mathrm{d}\theta_{\mathbb{S}^n} \geq &\f 1-c_2^2\r^{-\frac{1}{2}}|S^n|f_{\min}^{-1}\log c_3+c_4\f 1-c_2^2\r^{-\frac{1}{2}} |S^n|f_{\min}^{-1}\log r_{\max}\\
\geq& -C_6+(1-c_2^2)^{-\frac{n+2}{2}}|S^n|f_{\max}f_{\min}^{-1}\log c_5,
\end{aligned}
\end{equation}
where we use $\max\limits_{\hat{M}_t}\hat{u}=\max\limits_{\hat{M}_t}r_{\max}$. 

If the maximal radial function of the initial hypersurface satisfies $\hat{u}_{\max}(0)>c_5$, then once $\hat{u}_{\max}(t)\leq c_5$,
by \eqref{2 r5}, \eqref{2 r7} and \eqref{2 r8} we obtain
\begin{align}\label{2 r9}
\log r_{\max}\geq -C_7+(1-c_2^2)^{-\frac{n+1}{2}}f_{\max}^2f_{\min}^{-1}\log c_5,
\end{align}
which implies that there exists a positive constant $c_6=e^{-C_7}c_5^{(1-c_2^2)^{-\frac{n+1}{2}}f_{\max}^2f_{\min}^{-1}}< c_5$, such that $r_{\max}\geq c_6$, untill $\hat{u}_{\max}>c_5$ again.

If the initial hypersurface of \eqref{flow s n+1} satisfies $\hat{u}_{\max}(0)\leq c_5$, then from \eqref{2 r7}, we note that $\int_{\mathbb{S}^n}\int_a^r s^{-1}(1-s^2)^{-\frac{1}{2}} f^{-1} \mathrm{d}s \mathrm{d}\theta_{\mathbb{S}^n}$ will monotone increasing till $\hat{u}_{\max}(t)>c_5$. This together with \eqref{2 r5} implies that
\begin{align}\label{2 r10}
|S^n|f_{\max}^{-1}\log r_{\max}+C_5\geq \int_{\mathbb{S}^n}\int_{c_2}^r s^{-1}(1-s^2)^{-\frac{1}{2}} f^{-1} \mathrm{d}s\mid_{t=0}.
\end{align}
Hence there exists a positive constant $c_7$, such that $r_{\max}\geq c_7$. Clearly, we can deduce from \eqref{2 r5} that $c_7\leq\hat{u}_{\max}(0)\leq c_5$. Since $\hat{u}_{\max}=r_{\max}$ in $\hat{M}_t$, we obtain $r_{\max}(t)\geq \min\lbrace c_6, c_7\rbrace$. All those constants only depend on $n$, $f$ and the initial hypersurface $M_0$. By \eqref{min-mix r}, there exists a positive constant $c_8$, such that $r_{\min}\geq c_8$. Thus the radial function $r$ of $\hat{M}_t$ satisfying $0<c_8\leq r\leq c_2<1$. By \eqref{sinh}, we obtain the uniform bounds of $\rho$ and complete the proof. 
\end{proof}

\noindent\textbf{Proof of Theorem 1.2}  Similar to the proof of Theorem \ref{thm1}, by Lemma \ref{thm2 c0}, Lemma \ref{c1} and Lemma \ref{conv-c2}, we establish the a priori estimates and obtain the long time existence of  \eqref{flow-s} for the case $\a=n+1$. Using the same argument in Section \ref{sec:5}, we show that the flow \eqref{flow-s} converges smoothly to the unique smooth even solution of \eqref{alex2}. Hence we complete the proof of Theorem \ref{thm2}.\qed

\begin{rem}
Note that the conditions of $\a$ and $f$ in Theorem \ref{thm1} and Theorem \ref{thm2} are necessary to the convergence and asymptotic results of \eqref{flow-s}. When $\a<n+1$, or When $\a=n+1$ and $f\geq 1$ at the same time, consider a geodesic sphere with its radial function $\rho\equiv c$. Note that $\sinh\rho\rightarrow 0$ and $\cosh\rho\rightarrow 1$ as $\rho\rightarrow 0$. When $\a<n+1$,
\begin{align*}
\sinh\rho^{\a-n-1}\cosh\rho^n \rightarrow\infty.
\end{align*}
When $\a=n+1$,
\begin{align*}
\sinh\rho^{\a-n-1}\cosh\rho^n \rightarrow 1^+.
\end{align*}
Hence there exists a constant $\rho_0>0$, such that for any $\rho\in(0,\rho_0)$, $-\sinh\rho^{\a-n-1}\cosh\rho^n f+1<0$ when $\a<n+1$, or when $\a=n+1$ and $f\geq 1$ at the same time. Assume the initial hypersurface of the flow \eqref{flow-s} is a geodesic sphere with its radial function $\rho\in(0,\rho_0)$. Then combining \eqref{s1:flow-rn} ($\tilde{\eta}(t)\equiv 1$) and \eqref{c01}, we obtain
\begin{align*}
\frac{\partial}{\partial t}\rho(\theta,t)=&-\phi(\rho)^{\a}f(\theta)w\s_k +\phi(\theta,t)\\
=&\phi(\rho)\f -\phi(\rho)^{\a-n-1}\phi'(\rho)^n+1) \r\\
<& 0.
\end{align*}
So the initial geodesic sphere keeps shrinking along \eqref{flow-s}.

However, for the cases $\a> n+1$, the same initial geodesic sphere of \eqref{flow-s} will expand untill it converges smoothly to the solution of \eqref{alex2}. For the case $\a=n+1$, convergence results of the flow \eqref{flow-s} need the assumption of $f$ in addition.
\end{rem}

\section{Proof of Theorem \ref{thm3}}\label{sec:7}
In this part, we consider the flow \eqref{flow-n} for the cases $2<\a\leq n+1$. First, we parametrize $M_t$ as a graph of the radial function $\rho(\theta,t): \mathbb{S}^n\times [0,T)\to \mathbb{R}$. By \eqref{s1:flow-rn}, the scalar parabolic PDE of the radial function turns to 
\begin{equation}\label{flow-nr}
\frac{\partial}{\partial t}\rho(\theta,t)= -\phi(\rho)^{\a}f(\theta)K(x,t)w+\frac{\displaystyle\int_{\mathbb{S}^n}\frac{K}{u}\phi^{n+1}\mathrm{d}\theta_{\mathbb{S}^n}}{\displaystyle\int_{\mathbb{S}^n}\phi^{n+1-\a}f^{-1}\mathrm{d}\theta_{\mathbb{S}^n}}\phi.
\end{equation}
Throughout this section, if not specified, we will regard $C_i$ and $C_I'$ for $i\in\mathbb{N}$ as some positive constants.
Observe that along \eqref{flow-n}, the hypersurfaces have the following properties.
\begin{lem}\label{vol phi}
Denote $\Omega(\rho)=\displaystyle\int_b^{\rho(\t,t)}(\sinh s)^{n-\a}\mathrm{d}s$. Then along the flow \eqref{flow-n},
\begin{align}
\displaystyle\int_{\mathbb{S}^n}\frac{\Omega(\rho)}{f}\mathrm{d}\theta_{\mathbb{S}^n}=const,\quad\text{for } t\geq 0.
\end{align}
Here $\rho(\dot,t)$ is the radial function of $M_t$.
\end{lem}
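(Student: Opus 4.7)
The plan is to differentiate the functional $\displaystyle\int_{\mathbb{S}^n}\frac{\Omega(\rho)}{f}\mathrm{d}\theta_{\mathbb{S}^n}$ in time and verify that the normalizing factor $\eta(t)$ in \eqref{flow-n} was engineered precisely to make the resulting expression vanish. Since $f$ is independent of $t$ and $\Omega'(\rho)=(\sinh\rho)^{n-\a}=\phi^{n-\a}$, differentiating under the integral sign gives
\begin{align*}
\frac{d}{dt}\int_{\mathbb{S}^n}\frac{\Omega(\rho)}{f}\mathrm{d}\theta_{\mathbb{S}^n}=\int_{\mathbb{S}^n}\frac{\phi^{n-\a}}{f}\,\partial_t\rho\,\mathrm{d}\theta_{\mathbb{S}^n}.
\end{align*}

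Next, I would insert the scalar form \eqref{flow-nr} of the flow, namely $\partial_t\rho=-\phi^{\a}fKw+\eta(t)\phi$, to split the integrand into two pieces:
\begin{align*}
\frac{d}{dt}\int_{\mathbb{S}^n}\frac{\Omega(\rho)}{f}\mathrm{d}\theta_{\mathbb{S}^n}=-\int_{\mathbb{S}^n}\phi^{n}Kw\,\mathrm{d}\theta_{\mathbb{S}^n}+\eta(t)\int_{\mathbb{S}^n}\frac{\phi^{n+1-\a}}{f}\mathrm{d}\theta_{\mathbb{S}^n}.
\end{align*}

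The key algebraic observation is that $uw=\phi$, which is immediate from comparing \eqref{w} and \eqref{u}. Hence $\phi^{n}Kw=K\phi^{n+1}/u$, and the first integral becomes $\int_{\mathbb{S}^n}\frac{K}{u}\phi^{n+1}\,\mathrm{d}\theta_{\mathbb{S}^n}$. By the very definition of $\eta(t)$ in \eqref{flow-n}, this equals $\eta(t)\int_{\mathbb{S}^n}\phi^{n+1-\a}f^{-1}\mathrm{d}\theta_{\mathbb{S}^n}$, so the two terms cancel exactly and the time derivative vanishes.

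This computation is essentially a direct verification with no real obstacle: the only thing to keep in mind is the identity $uw=\phi$ (which is what converts $\phi^{n}Kw$ into the precise form appearing in the numerator of $\eta(t)$). This is exactly how the normalization in \eqref{flow-n} was chosen, and it is what makes the quantity $\int_{\mathbb{S}^n}\Omega(\rho)/f\,\mathrm{d}\theta_{\mathbb{S}^n}$ a conserved quantity along the flow, which will later play the role of a substitute for the $C^0$ control that is unavailable in the regime $2<\a\leq n+1$.
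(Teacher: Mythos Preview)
Your proof is correct and follows essentially the same approach as the paper: differentiate under the integral, insert the scalar flow equation \eqref{flow-nr}, and use the identity $w=\phi/u$ so that the two terms cancel by the definition of $\eta(t)$. The paper's proof is simply a compressed version of what you wrote.
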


\begin{proof}
By \eqref{flow-nr},
\begin{equation}
\begin{aligned}
\partial_t\displaystyle\int_{\mathbb{S}^n}\frac{\Omega(\rho)}{f}\mathrm{d}\theta_{\mathbb{S}^n}=&\displaystyle\int_{\mathbb{S}^n}\phi (\rho)^{n-\a}f^{-1}\partial_t\rho \mathrm{d}\theta_{\mathbb{S}^n}=0
\end{aligned}
\end{equation}
where we use $w=\frac{\phi}{u}$ in the last equality.
\end{proof}
\begin{rem}
Without loss of generality, we can choose $b=\frac{1}{2}\min\limits_{M_0}\rho$.
\end{rem}
From Lemma \ref{vol phi}, we have the following proposition.
\begin{prop}
Along \eqref{flow-nr}, the maximal radial functions $\rho_{\max}(t)$ of $M_t$ have uniform lower bounds.
\end{prop}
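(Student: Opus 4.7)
The plan is to extract the lower bound on $\rho_{\max}(t)$ directly from the conservation law of Lemma \ref{vol phi} together with the strict monotonicity of $\Omega$. Since $\Omega'(\rho) = (\sinh \rho)^{n-\alpha} > 0$, $\Omega$ is strictly increasing on $(0,\infty)$, so the elementary pointwise inequality
\begin{equation*}
\Omega(\rho(\theta,t)) \leq \Omega(\rho_{\max}(t)) \qquad \text{for all } \theta\in\mathbb{S}^n
\end{equation*}
is available. Dividing by the positive function $f(\theta)$ and integrating over $\mathbb{S}^n$ yields
\begin{equation*}
\int_{\mathbb{S}^n}\frac{\Omega(\rho(\theta,t))}{f(\theta)}\,\mathrm{d}\theta_{\mathbb{S}^n}\ \leq\ \Omega(\rho_{\max}(t))\int_{\mathbb{S}^n}\frac{1}{f(\theta)}\,\mathrm{d}\theta_{\mathbb{S}^n}.
\end{equation*}

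Next I would appeal to Lemma \ref{vol phi}, which says the left-hand side is independent of $t$; call this constant $C_0 := \int_{\mathbb{S}^n}\Omega(\rho_0)/f\,\mathrm{d}\theta_{\mathbb{S}^n}$. The key step is the choice of $b$ recorded in the remark: with $b=\tfrac12\min_{M_0}\rho$, we have $\rho_0(\theta)\geq 2b>b$ for every $\theta$, so monotonicity of $\Omega$ gives $\Omega(\rho_0(\theta))\geq \Omega(2b)>0=\Omega(b)$. Integrating, this produces a quantitative positive lower bound
\begin{equation*}
C_0 \ \geq\ \Omega(2b)\int_{\mathbb{S}^n}\frac{1}{f(\theta)}\,\mathrm{d}\theta_{\mathbb{S}^n}>0.
\end{equation*}

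Combining the two displayed inequalities, $\Omega(\rho_{\max}(t))\geq \Omega(2b)$ for every $t\geq 0$, and strict monotonicity of $\Omega$ upgrades this to $\rho_{\max}(t)\geq 2b = \min_{M_0}\rho$, which is a uniform positive lower bound depending only on the initial hypersurface. There is no real obstacle here: the entire argument uses nothing beyond the conservation law, the sign of $\Omega'$, and the positivity of $f$; in particular the proof is uniform in the cases $\alpha=n+1$ (where $\Omega$ is unbounded below near $0$) and $2<\alpha<n+1$ (where $\Omega(0^+)$ is finite), since only the monotonicity of $\Omega$ on its entire range is used. The only point to be careful about is that the inequality $\Omega(\rho(\theta,t))/f(\theta)\leq \Omega(\rho_{\max}(t))/f(\theta)$ is valid regardless of the sign of $\Omega$ because $f>0$, so no case distinction based on the sign of $\Omega$ is needed.
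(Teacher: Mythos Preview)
Your argument is correct and in fact cleaner than the paper's. Both proofs rest on the conservation law of Lemma \ref{vol phi}, but you exploit it more directly: from the pointwise monotonicity $\Omega(\rho(\theta,t))\leq\Omega(\rho_{\max}(t))$ you pass immediately to
\[
C_0=\int_{\mathbb{S}^n}\frac{\Omega(\rho)}{f}\,\mathrm{d}\theta_{\mathbb{S}^n}\leq\Omega(\rho_{\max}(t))\int_{\mathbb{S}^n}\frac{1}{f}\,\mathrm{d}\theta_{\mathbb{S}^n},
\]
and then invert $\Omega$ to get the explicit bound $\rho_{\max}(t)\geq 2b=\min_{M_0}\rho$. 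The paper instead rewrites $\phi^{\,n+1-\a}(\rho)-\phi^{\,n+1-\a}(b)$ (respectively $\log\phi(\rho)-\log\phi(b)$ when $\a=n+1$) as an integral involving an extra $\cosh s$ factor, bounds $\cosh s\geq\cosh b$, relates the result back to $\int_{\mathbb{S}^n}\Omega(\rho)\,\mathrm{d}\theta_{\mathbb{S}^n}$, and finishes by contradiction as $\rho_{\max}\to 0$. Your route avoids the case split on $\a$, avoids the auxiliary $\cosh$ factor, and yields a sharp quantitative bound rather than an existential one; the paper's route has the minor advantage of making the quantities $\phi^{\,n+1-\a}$ and $\log\phi$ explicit, which are reused immediately afterward in Proposition \ref{thm3 c0}.
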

\begin{proof}
Observe that when $\a<n+1$, since $\cosh s>0$  and it is monotone increasing, we have
\begin{equation}\label{int phi low}
\begin{aligned}
&\phi^{n+1-\a}(\rho_{\max})|\mathbb{S}^n|-\phi^{n+1-\a}(b)|\mathbb{S}^n|\\
\geq &\int_{\mathbb{S}^n}\f\phi^{n+1-\a}(\rho)-\phi^{n+1-\a}(b)\r\mathrm{d}\theta_{\mathbb{S}^n}\\
=&(n+1-\a)\int_{\mathbb{S}^n}\int_b^{\rho}(\sinh s)^{n-\a}\cosh s \ \mathrm{d}s \mathrm{d}\theta_{\mathbb{S}^n}\\
\geq&(n+1-\a)\int_{\mathbb{S}^n}\int_b^{\rho}(\sinh s)^{n-\a}\cosh b \  \mathrm{d}s \mathrm{d}\theta_{\mathbb{S}^n}\geq c>0.
\end{aligned}
\end{equation}
Here we use Lemma \ref{vol phi} in the last inequality. The positive constant $c$ depends on the initial hypersurface, $\min f$ and $a$. Similarly,
when $\a=n+1$, there exists a positive constant $c'$, such that 
\begin{equation}\label{int phi low'}
\begin{aligned}
\log\phi(\rho_{\max})|\mathbb{S}^n|-\log\phi(b)|\mathbb{S}^n|\geq &\int_{\mathbb{S}^n}\f\log\phi(\rho)-\log\phi(b)\r\mathrm{d}\theta_{\mathbb{S}^n}\\
=&\int_{\mathbb{S}^n}\int_b^{\rho}(\sinh s)^{-1}\cosh s \ \mathrm{d}s \mathrm{d}\theta_{\mathbb{S}^n}\\
\geq&\int_{\mathbb{S}^n}\int_b^{\rho}(\sinh s)^{-1}\cosh b \ \mathrm{d}s \mathrm{d}\theta_{\mathbb{S}^n}\geq c'>0.
\end{aligned}
\end{equation}
If $\rho_{\max}\rightarrow 0$, we have $\phi(\rho_{\max})^{n+1-\a}\rightarrow 0$ and $\log\phi(\rho_{\max})\rightarrow -\infty$, which is contrary to \eqref{int phi low} and \eqref{int phi low'}. Hence, we obtain the uniform lower bound of $\phi_{\max}$ and complete the proof.

\end{proof}

Using the projection $\pi_p$ in Section \ref{sec:5} and the relation between $M_t$ and $\pi_p(M_t)$, \eqref{proj re}-\eqref{u np} and \eqref{hat k}, we obtain the scalar equation of the support function $\hat{u}$ of $\pi_p(M_t)$,
\begin{align}\label{flow np}
\partial_t \hat{u}=-r^{\a}(1-r^2)^{\frac{n+3-\a}{2}}(1-\hat{u}^2)^{-\frac{n+1}{2}}f\hat{K}+\eta(t)\hat{u}\sqrt{1-r^2},
\end{align}
where $\eta(t)=\frac{\displaystyle\int\sqrt{1-r^2}(1-\hat{u}^2)^{-\frac{n+1}{2}}\mathrm{d}\sigma_{\mathbb{S}^n}}{\displaystyle\int\frac{r^{n+1-\a}}{(1-r^2)^{\frac{n+1-\a}{2}}}f^{-1}\mathrm{d}\theta_{\mathbb{S}^n}}$.

Under the flow \eqref{flow np}, we have the following monotone function
\begin{align}
\mathcal{J}(\hat{u})=\int_{\mathbb{S}^n}\Psi(\hat{u}) \mathrm{d}\sigma_{\mathbb{S}^n}
\end{align}
where $\Psi(\hat{u})=\int_a^{\hat{u}(x,t)}\frac{1}{s}(1-s^2)^{-\frac{n+1}{2}}\mathrm{d}s$. Without loss of generality, we can choose $a=\frac{1}{2}\min\limits_{\hat{M_0}}\hat{u}$.
\begin{lem}\label{ju}
Along \eqref{flow np}, $\mathcal{J}(\hat{u})$ is non-increasing and the equality holds if and only if $\pi_p(M_t)$ satisfies the following equation 
\begin{align}\label{np cri}
r^{\a}(1-r^2)^{\frac{n+3-\a}{2}}(1-\hat{u}^2)^{-\frac{n+1}{2}}f\hat{K}=c\hat{u}\sqrt{1-r^2}
\end{align}
for some positive constant $c$.
\end{lem}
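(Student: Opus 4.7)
\textbf{Proof plan for Lemma \ref{ju}.} The strategy is straightforward once one notices that the definition of the global term $\eta(t)$ is tuned precisely so that monotonicity of $\mathcal{J}(\hat{u})$ reduces to a single Cauchy--Schwarz inequality on $\mathbb{S}^n$.

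First I would differentiate under the integral sign. Since $\Psi'(\hat{u}) = \hat{u}^{-1}(1-\hat{u}^2)^{-\frac{n+1}{2}}$, using the flow equation \eqref{flow np} one gets
\begin{equation*}
\frac{d}{dt}\mathcal{J}(\hat{u}) = -\int_{\mathbb{S}^n} \frac{r^{\a}(1-r^2)^{\frac{n+3-\a}{2}} f \hat{K}}{\hat{u}(1-\hat{u}^2)^{n+1}}\,d\sigma_{\mathbb{S}^n} + \eta(t)\int_{\mathbb{S}^n}\frac{\sqrt{1-r^2}}{(1-\hat{u}^2)^{\frac{n+1}{2}}}\,d\sigma_{\mathbb{S}^n}.
\end{equation*}
The second integral is, by design, exactly the numerator of $\eta(t)$ (as verified in the derivation of \eqref{flow np} from \eqref{flow-n}). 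Hence the problem reduces to showing that
\begin{equation*}
\left(\int_{\mathbb{S}^n}\frac{\sqrt{1-r^2}}{(1-\hat{u}^2)^{\frac{n+1}{2}}}\,d\sigma_{\mathbb{S}^n}\right)^{\!2} \leq \left(\int_{\mathbb{S}^n}\frac{r^{\a}(1-r^2)^{\frac{n+3-\a}{2}} f \hat{K}}{\hat{u}(1-\hat{u}^2)^{n+1}}\,d\sigma_{\mathbb{S}^n}\right)\left(\int_{\mathbb{S}^n}\frac{r^{n+1-\a}}{(1-r^2)^{\frac{n+1-\a}{2}}}f^{-1}\,d\theta_{\mathbb{S}^n}\right).
\end{equation*}

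Next I would convert the two $\sigma$-integrals on the left and in the middle to $\theta$-integrals via the area formula $d\sigma_{\mathbb{S}^n} = \hat{K} r^{n+1} \hat{u}^{-1}\, d\theta_{\mathbb{S}^n}$ used repeatedly in Section \ref{sec:5}. A short calculation shows that the inequality above is exactly Cauchy--Schwarz $\bigl(\int XY\,d\theta\bigr)^2 \leq \int X^2\,d\theta \cdot \int Y^2\,d\theta$ with the choice
\begin{equation*}
X = \frac{r^{\frac{\a+n+1}{2}}(1-r^2)^{\frac{n+3-\a}{4}}\sqrt{f}\,\hat{K}}{\hat{u}(1-\hat{u}^2)^{\frac{n+1}{2}}},\qquad Y = \frac{r^{\frac{n+1-\a}{2}}}{(1-r^2)^{\frac{n+1-\a}{4}}\sqrt{f}}.
\end{equation*}
Indeed $X^2$ integrated against $d\theta$ recovers (after inserting $\hat{K}r^{n+1}\hat{u}^{-1}d\theta = d\sigma$) the middle factor on the right, $Y^2$ is the rightmost factor, and $XY$ telescopes cleanly to $\sqrt{1-r^2}(1-\hat{u}^2)^{-\frac{n+1}{2}}\hat{K}r^{n+1}\hat{u}^{-1}$, whose integral against $d\theta$ is the left-hand side under the same substitution.

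Finally, equality in Cauchy--Schwarz holds iff $X = cY$ for some constant $c$. Clearing the common factors one finds $X=cY$ is equivalent to
\begin{equation*}
r^{\a}(1-r^2)^{\frac{n+2-\a}{2}} f \hat{K} = c\,\hat{u}(1-\hat{u}^2)^{\frac{n+1}{2}},
\end{equation*}
which is \eqref{np cri} after multiplying by $(1-\hat{u}^2)^{-\frac{n+1}{2}}$ and grouping a factor $\sqrt{1-r^2}$. The only obstacle is bookkeeping of the exponents in the Cauchy--Schwarz pairing; there are no estimates to control, and the inequality is an identity once $X$ and $Y$ are chosen correctly. The derivation is completely algebraic given the change of variable $d\sigma = \hat{K}r^{n+1}\hat{u}^{-1}d\theta$ and the definition of $\eta(t)$.
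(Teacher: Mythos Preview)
Your proof is correct and follows essentially the same route as the paper: differentiate $\mathcal{J}(\hat u)$, plug in the flow, recognize the second term as $\eta(t)$ times its own numerator, and apply Cauchy--Schwarz (the paper says H\"older) to conclude. The only cosmetic difference is that you pass to $d\theta$-integrals and display the explicit factors $X,Y$, whereas the paper keeps everything in $d\sigma$-integrals and leaves the Cauchy--Schwarz pairing implicit.
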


\begin{proof}
By \eqref{flow np},
\begin{equation}
\begin{aligned}
\partial_t\mathcal{J}(\hat{u})=&\int_{\mathbb{S}^n}\frac{1}{\hat{u}}(1-\hat{u}^2)^{-\frac{n+1}{2}}\partial_t u \mathrm{d}\sigma_{\mathbb{S}^n}\\
=&\frac{\f \displaystyle\int\sqrt{1-r^2}(1-\hat{u}^2)^{-\frac{n+1}{2}}\mathrm{d}\sigma_{\mathbb{S}^n}\r^2}{\displaystyle\int\frac{r^{-\a}\hat{u}}{(1-r^2)^{\frac{n+1-\a}{2}}\hat{K}}f^{-1}\mathrm{d}\sigma_{\mathbb{S}^n}}-\displaystyle\int r^{\a}(1-r^2)^{\frac{n+3-\a}{2}}(1-\hat{u}^2)^{-(n+1)}f\frac{\hat{K}}{\hat{u}}\mathrm{d}\sigma_{\mathbb{S}^n}\\
\leq& 0.
\end{aligned}
\end{equation}
Here we use H$\ddot{\text{o}}$lder inequality in the last inequality and the equality holds if and only if \eqref{np cri} holds. 

\end{proof}
Next, we will show the radial function $\rho(\cdot, t)$ is uniformly bounded along the flow \eqref{flow-n} under the following assumption.
\begin{prop}\label{thm3 c0}
Let $\rho$ be a smooth, positive, uniformly convex and origin-symmetric solution to \eqref{flow-nr} on $\mathbb{S}^n\times[0,T)$. If $2<\a\leq n+1$ and $f$ is a positive even function on $\mathbb{S}^n$, then there exists a positive constant $C$ depending on $\a$, $n$, $f$ and the initial hypersurface, such that
\begin{align}
\frac{1}{C}\leq \rho \leq C,\quad \forall t\in[0,T).
\end{align} 

\end{prop}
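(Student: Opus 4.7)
My plan is to parallel the strategy of Lemma \ref{thm2 c0}: pass to the Klein model via $\pi_p$, bring the flow into Euclidean form, and then exploit the two available global structural facts -- the conservation $\int \Omega(\rho)/f\,d\theta \equiv \mathrm{const}$ from Lemma \ref{vol phi} and the monotonicity $\mathcal{J}(\hat{u}(t)) \leq \mathcal{J}(\hat{u}(0))$ from Lemma \ref{ju} -- in tandem with the preceding lower bound $\rho_{\max}(t) \geq c_0$ and the origin-symmetry of $M_t$ (inherited from $f$ even and $M_0$ origin-symmetric). The origin-symmetry gives the convex-body inequalities
\begin{align*}
\hat{u}(v,t) \geq \hat{u}_{\max}(t)\,|\langle v, v_0\rangle|,\qquad r(\theta,t) \leq \frac{r_{\min}(t)}{|\langle \theta, \theta_0\rangle|},
\end{align*}
together with the diameter identity $r_{\max}(t)=\hat{u}_{\max}(t)$ valid for any origin-symmetric convex body.

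For the upper bound, I would first evaluate \eqref{flow-nr} at the spatial maximum of $\rho$, where $|\overline{\nabla}\rho| = 0$, $w = 1$, and uniform convexity forces $K \geq (\phi'/\phi)^n$, giving
\begin{align*}
\partial_t\rho_{\max} \leq \phi\bigl(\eta(t) - \phi^{\alpha-n-1}(\phi')^n f\bigr).
\end{align*}
Since $\alpha > 1$, $\phi^{\alpha-n-1}(\phi')^n$ grows like $2^{1-\alpha}e^{(\alpha-1)\rho}$ as $\rho\to\infty$, so it will suffice to derive a uniform upper bound on $\eta(t)$. The denominator $\int \phi^{n+1-\alpha}f^{-1}\,d\theta$ of $\eta(t)$ can be bounded below from the preceding lower bound on $\rho_{\max}$ together with origin-symmetric convexity (placing $\rho$ above a positive threshold on a set of definite measure). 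The numerator is the total hyperbolic Gauss integral $\int_{M_t}K\,dv_M$, which after Klein projection equals $\int_{\mathbb{S}^n}\sqrt{1-r^2}(1-\hat{u}^2)^{-(n+1)/2}\,d\sigma$; this integral can be controlled by a quantity that is self-improving in $r_{\max}=\hat{u}_{\max}$ once combined with Lemma \ref{vol phi}, forbidding the scenario $\rho_{\max}\to\infty$.

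For the lower bound on $\rho$, once Step~1 gives $r,\hat{u}\leq c_1<1$, I would carry out the Jensen-type computation of Lemma \ref{thm2 c0}: starting from $\int_{\mathbb{S}^n}\Psi(\hat{u})\,d\sigma \leq \mathcal{J}(\hat{u}(0))$, split the sphere into hemispheres around $\pm v_0$, apply the origin-symmetric estimate $\hat{u}(v)\geq\hat{u}_{\max}|\cos v_1|$ to control the positive contribution, and use the sharp logarithmic integrals established in the proof of Lemma \ref{thm2 c0} to obtain an inequality of the form $\hat{u}_{\min} \geq c_2\,\hat{u}_{\max}^{c_3}$. Combined with $\hat{u}_{\max}=r_{\max}\geq \tanh c_0$ from the preceding proposition, this yields $\hat{u}_{\min}\geq c>0$, and hence $\rho \geq c'>0$ uniformly.

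The main obstacle will be bounding $\eta(t)$ from above in Step~1. In contrast to the Euclidean setting, the total hyperbolic Gauss curvature $\int_{M_t}K\,dv_M$ is not a topological constant but depends on the area of $M_t$, so I will need to leverage both Lemma \ref{vol phi} and the origin-symmetric convex-body inequalities simultaneously to close the loop. The borderline case $\alpha=n+1$ -- where $\Omega(\rho)$ stays bounded as $\rho\to\infty$ and therefore encodes weaker pointwise information about $\rho$ than in the range $\alpha<n$ -- will require extra care.
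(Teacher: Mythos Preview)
Your proposal has the right ingredients but assigns them to the wrong halves of the argument, and the lower-bound step as you describe it will not close.

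For the upper bound, the paper does \emph{not} obtain a uniform a priori bound on $\eta(t)$. What it actually does is use the monotonicity $\mathcal{J}(\hat u)\le C_1$ of Lemma~\ref{ju} (not the conservation law of Lemma~\ref{vol phi}) together with $\hat u(v)\ge \hat u_{\max}|\langle v,v_0\rangle|$ and the already-established lower bound $\hat u_{\max}=r_{\max}\ge c_2$ to deduce $\int_{\mathbb S^n}(1-\hat u^2)^{-(n-1)/2}\,d\sigma\le C_8$. This yields only
\[
\int_{\mathbb{S}^n}\frac{K}{u}\phi^{n+1}\,d\theta
=\int_{\mathbb{S}^n}\sqrt{1-r^2}\,(1-\hat u^2)^{-\frac{n+1}{2}}\,d\sigma
\le \frac{C_8}{\sqrt{1-\hat u_{\max}^2}}\le C_9\,\phi_{\max},
\]
so that $\eta(t)\le C'\phi_{\max}$ rather than a constant. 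Plugging this into the ODE at the spatial maximum gives $\partial_t\rho_{\max}\le \phi_{\max}^2\bigl(-\phi_{\max}^{\alpha-2}f+C\bigr)$, and this is exactly why the restriction $\alpha>2$ enters. A uniform bound on $\eta$ would make the proposition hold for all $\alpha>1$, and no such bound is available prior to the $C^0$ estimate.

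Your lower-bound strategy has a genuine gap. Since $\mathcal{J}(\hat u)$ is non-increasing, you only know $\mathcal{J}(\hat u)\le\mathcal{J}(\hat u(0))$; but $\Psi(\hat u)\sim\log\hat u$ near $0$, so letting $\hat u_{\min}\to 0$ drives $\mathcal{J}\to-\infty$, which is entirely compatible with that upper bound and yields no contradiction. The origin-symmetric inequality $\hat u(v)\ge \hat u_{\max}|\cos v_1|$ produces a \emph{lower} bound on $\Psi(\hat u)$ and hence on $\mathcal{J}$, again the wrong direction. The paper instead obtains the lower bound on $\rho$ from Lemma~\ref{vol phi}: once the upper bound gives $r\le c_3<1$, one inserts the radial estimate $r(\theta)\le r_{\min}/|\cos\theta_1|$ into the conserved integral $\int\Omega(\rho)f^{-1}\,d\theta$ (equivalently into $\int\phi^{n+1-\alpha}\,d\theta$ for $\alpha<n+1$, or $\int\log\phi\,d\theta$ for $\alpha=n+1$) and shows it would fall below its fixed initial value if $r_{\min}\to 0$, a contradiction. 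In short, in the paper Lemma~\ref{ju} drives the upper bound and Lemma~\ref{vol phi} drives the lower bound, the reverse of your allocation.
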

\begin{proof}
Firstly, we prove $\rho$ has a uniform upper bound. Motivated by the proof in Lemma \ref{C0-est}, we will show that $\frac{\displaystyle\int_{\mathbb{S}^n}{\frac{K}{u}\phi^{n+1}\mathrm{d}\theta_{\mathbb{S}^n}}}{\displaystyle\int_{\mathbb{S}^n}\phi^{n+1-\a}f^{-1}\mathrm{d}\theta_{\mathbb{S}^n}}$ is bounded from above by multiple $\phi_{\max}$.
By Lemma \ref{ju}, there exists a positive constant $C_1>0$, such that
\begin{equation}
\begin{aligned}\label{ju2}
C_1\geq &\mathcal{J}(\hat{u})\\
= &\int_{\mathbb{S}^n}\int_a^{\hat{u}}\frac{1}{s}(1-s^2)^{-\frac{n+1}{2}}\mathrm{d}s \mathrm{d}\sigma_{\mathbb{S}^n}\\
\geq &2^{\frac{n+1}{2}}\int\limits_{\mathbb{S}^n\bigcap\lbrace  x|\hat{u}>a\rbrace}\int_a^{\hat{u}}(1-s)^{-\frac{n+1}{2}}\mathrm{d}s \mathrm{d}\sigma_{\mathbb{S}^n}-(1-a^2)^{-\frac{n+1}{2}}\int\limits_{\mathbb{S}^n\bigcap\lbrace  x|\hat{u}\leq a\rbrace}\int_{\hat{u}}^a \frac{1}{s}\mathrm{d}s \mathrm{d}\sigma_{\mathbb{S}^n}\\
\geq &C_2\int\limits_{\mathbb{S}^n\bigcap\lbrace  x|\hat{u}>a\rbrace}\f(1-\hat{u})^{-\frac{n-1}{2}}-(1-a)^{\frac{n-1}{2}}\r\mathrm{d}\sigma_{\mathbb{S}^n}-C_3\int\limits_{\mathbb{S}^n\bigcap\lbrace  x|\hat{u}\leq a\rbrace}\f\log a-\log\hat{u}\r \mathrm{d}\sigma_{\mathbb{S}^n}\\
\geq& -C_4+C_2\int_{\mathbb{S}^n}(1-\hat{u})^{-\frac{n-1}{2}} \mathrm{d}\sigma_{\mathbb{S}^n}-C_2\int\limits_{\mathbb{S}^n\bigcap\lbrace  x|\hat{u}\leq a\rbrace}(1-\hat{u})^{-\frac{n-1}{2}} \mathrm{d}\sigma_{\mathbb{S}^n}+C_3\int\limits_{\mathbb{S}^n}\log\hat{u} \ \mathrm{d}\sigma_{\mathbb{S}^n}\\
\geq& -C_4+C_2\int_{\mathbb{S}^n}(1-\hat{u})^{-\frac{n-1}{2}} \mathrm{d}\sigma_{\mathbb{S}^n}+C_3\int\limits_{\mathbb{S}^n}\log\hat{u} \ \mathrm{d}\sigma_{\mathbb{S}^n}.
\end{aligned}
\end{equation}
Similar to the proof in \eqref{thm2 Q},
we have when $n=1$
\begin{align}\label{ju2'}
-C_5'\int_{\mathbb{S}^1}\log(1-\hat{u}) \mathrm{d}\sigma_{\mathbb{S}^n}+C_6'\int_{\mathbb{S}^1}\log \hat{u} \mathrm{d}\sigma_{\mathbb{S}^1}\leq C_7',
\end{align}
and when $n\geq 2$
\begin{align}\label{ju2"}
C_5\int\limits_{\mathbb{S}^n}(1-\hat{u})^{-\frac{n-1}{2}} \mathrm{d}\sigma_{\mathbb{S}^n}+C_6\int\limits_{\mathbb{S}^n}\log \hat{u} \mathrm{d}\sigma_{\mathbb{S}^n}\leq C_7.
\end{align}
Assume $\hat{u}_{\max}(t)$ is attained at $v_0\in \mathbb{S}^n$ at any fixed time $t$. Because the flow hypersurfaces are uniformly convex and origin-symmetric, we have $\hat{u}(v) \geq \hat{u}_{\max}|\metric{v}{v_0}|$. Then
\begin{align}\label{log u}
\int_{\mathbb{S}^n}\log \hat{u} \mathrm{d}\sigma_{\mathbb{S}^n}\geq \int_{\mathbb{S}^n}(\log \hat{u}_{\max}+\log |\metric{v}{v_0} |) \mathrm{d}\sigma_{\mathbb{S}^n}.
\end{align}
The second integral in \eqref{log u} is convergent, for the same reason as \eqref{thm2 logv}.
Thus by \eqref{ju2'} and \eqref{ju2"}, there exists positive constants $C_5$ and $C_5'$ such that,
when $n=1$
\begin{align}\label{ju 1'}
\int_{\mathbb{S}^1}\log(1-\hat{u}^2)\mathrm{d}\sigma_{\mathbb{S}^1}\geq\int_{\mathbb{S}^1}\log(1-\hat{u})\mathrm{d}\sigma_{\mathbb{S}^1}\geq -C_8',
\end{align} 
and when $n\geq 2$
\begin{align}\label{ju 1"}
C_8\geq\int_{\mathbb{S}^n}(1-\hat{u})^{-\frac{n-1}{2}} \mathrm{d}\sigma_{\mathbb{S}^n}\geq\int_{\mathbb{S}^n}(1-\hat{u}^2)^{-\frac{n-1}{2}} \mathrm{d}\sigma_{\mathbb{S}^n}.
\end{align} 
\eqref{int phi low} and \eqref{int phi low'} implies that there exists a constant $c_1>0$, such that $\rho_{max}\geq c_1$, i.e., by \eqref{sinh} there exists $c_2>0$, such that $r\geq c_2$. Recall the relation between $M_t$ and $\pi_p(M_t)$, \eqref{proj re}-\eqref{u np} and \eqref{hat k}. Note that $r(\t,t)=\sqrt{\hat{u}^2+|\nabla \hat{u}|^2}\geq \hat{u}(v,t)$ and $\max\limits_{\hat{M}_t}r=\max\limits_{\hat{M}_t}\hat{u}$. At a fixed time $t$, we have
\begin{equation}\label{int 3}
\begin{aligned}
\displaystyle\int_{\mathbb{S}^n}\frac{K}{u}\phi^{n+1}\mathrm{d}\theta_{\mathbb{S}^n}=&\displaystyle\int_{\mathbb{S}^n}\sqrt{1-r^2}(1-\hat{u}^2)^{-\frac{n+1}{2}}\mathrm{d}\sigma_{\mathbb{S}^n}\\
\leq &\frac{1}{\sqrt{1-\hat{u}_{\max}^2}}\int_{\mathbb{S}^n}(1-\hat{u}^2)^{-\frac{n-1}{2}}\mathrm{d}\sigma_{\mathbb{S}^n}\\
\leq &\frac{C_8}{c_2}\frac{r_{\max}}{\sqrt{1-r_{\max}^2}}=C_9\phi_{\max},
\end{aligned}
\end{equation}
where we use \eqref{ju 1"} in the second inequality and \eqref{sinh} in the last equality. Moreover, the uniform lower bound of $\displaystyle\int_{\mathbb{S}^n}\phi^{n+1-\a}f^{-1}\mathrm{d}\theta_{\mathbb{S}^n}$ is obtained  directly by \eqref{int phi low}. At the maximal point of $\rho$, inserting \eqref{int 3} into \eqref{flow-nr}, similar to the proof in Lemma \ref{C0-est}, we have
\begin{equation}
\begin{aligned}
\partial_t\rho_{\max}\leq &-\phi_{\max}^{\a-n}\phi_{\max}'^nf+\frac{\displaystyle\int_{\mathbb{S}^n}\frac{K}{u}\phi^{n+1}\mathrm{d}\theta_{\mathbb{S}^n}}{\displaystyle\int_{\mathbb{S}^n}\phi^{n+1-\a}f^{-1}\mathrm{d}\theta_{\mathbb{S}^n}}\phi_{\max}\\
\leq&\phi_{\max}^2(-\phi_{\max}^{\a-2}f+C_7).
\end{aligned}
\end{equation}
Hence when $\a>2$, $\phi=\sinh \rho$ has an uniform upper bound. Thus by \eqref{sinh} there exists a constant $c_3>0$, such that $r\leq c_3<1$.

In order to obtain the lower bound of $\rho$, we parametrize any point $\theta$ in $\mathbb{S}^n$ as in \eqref{theta para}. Assume $r_{\min}$ is attained at $\theta_0=(1,\vec{0})$. We have $\metric{\theta}{\theta_0}=\cos \theta_1$.
Since $r\leq c_3<1$, there exists a positive constant $c_4$, such that $ \phi=\frac{r}{\sqrt{1-r^2}}\leq c_4 r$. Denote by $\delta=\sqrt{r_{\min}}$.
For $2<\a<n+1$, we have
\begin{equation}\label{c0 thm3}
\begin{aligned}
\int_{\mathbb{S}^n}\phi^{n+1-\a}(\rho)\mathrm{d}\theta_{\mathbb{S}^n}\leq& c_4^{n+1-\a}\int\limits_{S_1=\mathbb{S}^n\bigcap\lbrace r\leq \delta\rbrace}r^{n+1-\a}\mathrm{d}\theta_{\mathbb{S}^n}+c_4^{n+1-\a}\int\limits_{S_2=\mathbb{S}^n\bigcap\lbrace r>\delta\rbrace}r^{n+1-\a}\mathrm{d}\theta_{\mathbb{S}^n}\\
\leq&C_8\delta^{n+1-\a}|S^n|+C_8c_3^{n+1-\a}|S_2|.
\end{aligned}
\end{equation}
 We also have $r(\theta)\leq \frac{r_{\min}(\theta_0)}{|\cos \theta_1|}$ because the flow hypersurface $M_t$ are strictly convex and origin symmetric. Then $S_2=\lbrace \theta|\cos\theta_1<\sqrt{r_{\min}}\rbrace$. Suppose $r_{\min}\rightarrow 0$. Clearly, $\delta=\sqrt{r_{\min}}\rightarrow 0$ and $|S_2|\rightarrow 0$ at the same time. By \eqref{c0 thm3}, we have $\int_{\mathbb{S}^n}\phi^{n+1-\a}(\rho)\mathrm{d}\theta_{\mathbb{S}^n}\rightarrow 0$, which is contrary to \eqref{int phi low}.
For $\a=n+1$, we have
\begin{equation}\label{thm3 f}
\begin{aligned}
\int_{\mathbb{S}^n}\log\phi(\rho)\mathrm{d}\theta_{\mathbb{S}^n}
\leq &\int_{\mathbb{S}^n}\log r(\theta)\mathrm{d}\theta_{\mathbb{S}^n}+|S^n|\log c_4\\
\leq &|S^n|\log r_{\min}(\theta_0)-\int_{\mathbb{S}^n}\log|\cos \theta_1| \mathrm{d}\theta_{\mathbb{S}^n}+|S^n|\log c_4.
\end{aligned}
\end{equation}
For the same reason as \eqref{thm2 logt}, the second term in \eqref{thm3 f} is convergent. So $\int_{\mathbb{S}^n}\log\phi(\rho)\mathrm{d}\theta_{\mathbb{S}^n}\rightarrow -\infty$ as $r_{\min}\rightarrow 0$, which is contrary to \eqref{int phi low'}. Hence we obtain the uniform lower bounds of $r$ as well as $\rho$ by \eqref{sinh} and complete the proof.
\end{proof}
\begin{rem}
Similar to the Euclidean space, we hope \eqref{ju2} implies the uniform upper bound of the support function $\hat{u}$ away from $1$. Unfortunately, we find an example here with unbounded $u$, i.e, $\hat{u}$ is very close to $1$ satisfying \eqref{ju2}. Suppose $\hat{E}(e_1,e_2,\cdots,e_2)$ is a rotation symmetric ellipsoid in $\mathbb{R}^{n+1}$ with its centroid in the origin represented by $\frac{x_1^2}{e_1^2}+\frac{x_2^2}{e_2^2}+\cdots+\frac{x_{n+1}^2}{e_2^2}=1$. Here we assume $0<e_2<e_1<1$. Then $\hat{E}$ can be parametrized by $x=(e_1\cos \theta,e_2\sin\theta\vec{x})$ where $\vec{x}\in \mathbb{S}^{n-1}$ is an $n$-dimensional unit vector. Correspondingly, the unit outward norm vector is 
\begin{align}\label{E u}
\frac{(e_2\cos \theta,e_1\sin\theta\vec{x})}{\sqrt{e_2^2\cos^2\theta+e_1^2\sin^2\theta}}.    
\end{align}
The support function becomes
\begin{align}\label{utheta}
\hat{u}(\theta)=\frac{e_1e_2}{\sqrt{e_2^2\cos^2\theta+e_1^2\sin^2\theta}}.
\end{align}
If we parametrize $\mathbb{S}^n$ as in \eqref{nu para}, by comparing with \eqref{E u} we have 
\begin{align}\label{nt}
\tan v_1=\frac{e_1}{e_2}\tan\theta.
\end{align}  
By \eqref{utheta} and \eqref{nt}, the support function can be parametrized by $v_1$ as 
\begin{equation}\label{unu}
\begin{aligned}
\hat{u}(v_1)=&\frac{e_1e_2\sqrt{\tan \theta^2+1}}{\sqrt{e_2^2+e_1^2\tan\theta^2}}=\frac{e_1\sqrt{\tan v_1^2\frac{e_2^2}{e_1^2}+1}}{\sqrt{1+\tan v_1^2}}\\
=&\sqrt{\tan v_1^2e_2^2+e_1^2}\cos v_1=\sqrt{e_1^2-(e_1^2-e_2^2)\sin v_1^2}.
\end{aligned}
\end{equation}
Substitute \eqref{nu para} into \eqref{ju 1'} and \eqref{ju 1"} respectively. When $n=1$, we obtain
\begin{equation}\label{umax1 1}
\begin{aligned}
\int_{\mathbb{S}^1}\log(1-\hat{u}^2)\mathrm{d}\sigma_{\mathbb{S}^1}=&\int_0^{2\pi}\log(1-e_1^2+(e_1^2-e_2^2)(\sin v_1)^2)dv_1\\
>&2\int_0^{\pi}\f\log(e_1^2-e_2^2)+2\log\sin v_1\r \mathrm{d}v_1\geq -C(e_1,e_2).
\end{aligned}
\end{equation}
When $n\geq 2$
\begin{equation}\label{umax1}
\begin{aligned}
&\int_{\mathbb{S}^n}(1-\hat{u}^2)^{-\frac{n-1}{2}}\mathrm{d}\sigma_{\mathbb{S}^n}\\
=&\int_0^{\pi}\int_{\mathbb{S}^{n-1}}(1-e_1^2+(e_1^2-e_2^2)\sin v_1^2)^{-\frac{n-1}{2}}(\sin v_1
)^{n-1}\mathrm{d}v_1\mathrm{d}\sigma_{\mathbb{S}^{n-1}}\\
<&\int_0^{\pi}\int_{\mathbb{S}^{n-1}}\f(e_1^2-e_2^2)\sin v_1^2\r^{-\frac{n-1}{2}}(\sin v_1
)^{n-1}\mathrm{d}v_1\mathrm{d}\sigma_{\mathbb{S}^{n-1}}\\
\leq& C(e_1,e_2,n).
\end{aligned}
\end{equation}
Now we construct a sequence of $\hat{E}(e_1^{(n)},e_2^{(n)})$ satisfying $e_1^{(n)}\rightarrow 1$ and $e_2^{(n)}\leq c<1$. Note that by \eqref{u np}, $\max u_n\rightarrow\infty$ as $\max \hat{u}_n=e_1\rightarrow 1$, which implies that the preimages of $\hat{E}(e_1^{(n)},e_2^{(n)})$ in Hyperbolic space tend to $\infty$. However, by inserting \eqref{umax1 1}, \eqref{umax1} into \eqref{ju2'}and \eqref{ju2"}, we show that $\mathcal{J}(\hat{u})_{\hat{E}(e_1^{(n)},e_2^{(n)})}$ still can be bounded from above by some positive constant $C$ independent of how far $e_1$ is away from $1$.
\end{rem}

\noindent\textbf{Proof of Theorem \ref{thm3}} Combining Lemma \ref{c1}, we established the $C^0$, $C^1$ estimates of the flow \eqref{flow-n} under the evenness assumption. Employing \eqref{sinh} and \eqref{u np}, we obtain the uniform upper and lower bounds of $\eta(t)$ in \eqref{flow np}. Then the $C^2$ estimate follows from Lemma \ref{conv-c2}. Thus we obtain the long time existence and regularity for the flow \eqref{flow-n}. Furthermore, Lemma \ref{ju} and the similar argument in Section \ref{sec:5} imply the asymptotic behaviour of the flow \eqref{flow-n} and we complete the proof of the Theorem \ref{thm3}. \qed

\begin{bibdiv}
\begin{biblist}
\bibliographystyle{amsplain}

\bib{A43}{article}{
 author={A.D. Alexandrov}, 
 title={Existence and uniqueness of a convex surface with a given integral curvature}, 
 journal={Doklady Acad. Nauk Kasah SSSR},
 volume={36}, 
 date={1942}, 
 pages={131-134},
 }
 
\bib{And04}{article}{
    author={Andrews, Ben},
  title={Fully nonlinear parabolic equations in two space variables},
  eprint={arXiv:math/0402235},
  year={2004}
}

\bib{ACW18}{article}{
	author={Andrews, Ben},
	author={Chen, Xuzhong},
	author={Wei, Yong},
	title={Volume preserving flow and Alexandrov-Fenchel type inequalities in hyperbolic space},
	journal={J. Eur. Math. Soc.(JEMS)},
	volume={23},
	pages={2467--2509},
	year={2021},
}

\bib{AHL19}{article}{
	author={Andrews, Ben},
	author={Hu, Yingxiang},
	author={Li, Haizhong},
	title={Harmonic mean curvature flow and geometric inequalities},
	journal={Adv. Math.},
	volume= {375},
	year={2020},
	pages={107393},
}
%

\bib{AW18}{article}{
    AUTHOR = {Andrews, Ben},
    AUTHOR = {Wei, Yong},
     TITLE = {Quermassintegral preserving curvature flow in hyperbolic
              space},
   JOURNAL = {Geom. Funct. Anal.},
    VOLUME = {28},
      YEAR = {2018},
    NUMBER = {5},
     PAGES = {1183--1208},
    
}

\bib{CLW18}{article}{
    author={Chen, Daguang},
    author={Li, Haizhong},
    author={Wang, Zhizhang},
     TITLE = {Starshaped compact hypersurfaces with prescribed {W}eingarten
              curvature in warped product manifolds},
   JOURNAL = {Calc. Var. Partial Differential Equations},
    VOLUME = {57},
      YEAR = {2018},
    NUMBER = {2},
     PAGES = {Paper No. 42, 26},
}

\bib{CH21}{article}{
      title={Flow by powers of the Gauss curvature in space forms}, 
      author={Min Chen}, 
      author={Jiuzhou Huang},
      year={2021},
      eprint={arXiv:2111.01951},
}

\bib{G06}{book}{
    AUTHOR = {Gerhardt, Claus},
     TITLE = {Curvature problems},
    SERIES = {Series in Geometry and Topology},
    VOLUME = {39},
 PUBLISHER = {International Press, Somerville, MA},
      YEAR = {2006},
     }
     
\bib{G11}{article}{
    AUTHOR = {Gerhardt, Claus},
     TITLE = {Inverse curvature flows in hyperbolic space},
   JOURNAL = {J. Differential Geom.},
    VOLUME = {89},
      YEAR = {2011},
    NUMBER = {3},
     PAGES = {487--527},
     
}



%

%

%

\bib{GL15}{article}{
    author={Guan, Pengfei},
    author={Li, Junfang},
     TITLE = {A mean curvature type flow in space forms},
   JOURNAL = {Int. Math. Res. Not. IMRN},
      YEAR = {2015},
    NUMBER = {13},
     PAGES = {4716--4740},
}

%
\bib{GL97}{article}{
    AUTHOR = {Guan, Pengfei}, 
    AUTHOR = { Li, Yanyan},
     TITLE = {{$C^{1,1}$} estimates for solutions of a problem of
              {A}lexandrov},
   JOURNAL = {Comm. Pure Appl. Math.},
    VOLUME = {50},
      YEAR = {1997},
    NUMBER = {8},
     PAGES = {789--811},
}


\bib{F21}{article}{
      title={On a Class of Fully Nonlinear Curvature Flows in Hyperbolic Space}, 
      author={Hong, Fang},
      year={2021},
      eprint={arXiv:2111.10170},
}		

\bib{HL21}{article}{
  title={Geometric inequalities for static convex domains in hyperbolic space},
  author={Hu, Yingxiang},
  author={Li, Haizhong},  
  journal= {To be appeared in Trans. Amer. Math. Soc.},
 eprint={arXiv:2105.03911},
  year={2021},

}

\bib{Hu-Li-Wei2020}{article}{
	author={Hu, Yingxiang},
	author={Li, Haizhong},
	author={Wei, Yong},
	title={Locally constrained curvature flows and geometric inequalities in hyperbolic space},
	journal={Math. Ann.},
	year={2020},
    eprint={https://doi.org/10.1007/s00208-020-02076-4}
}






\bib{LL20}{article}{
    AUTHOR = {Liu, Yannan},
    AUTHOR = {Lu, Jian},
     TITLE = {A flow method for the dual {O}rlicz-{M}inkowski problem},
   JOURNAL = {Trans. Amer. Math. Soc.},
    VOLUME = {373},
      YEAR = {2020},
    NUMBER = {8},
     PAGES = {5833--5853},
}

\bib{LWX14}{article}{
	author={Li, Haizhong},
    author={Wei,Yong},
    author={Xiong,Changwei},
 	title={A geometric inequality on hypersurface in hyperbolic space},
 	journal={Adv. Math.},
 	volume = {253},
    number={1},
    pages={152--162},
	year = {2014},
}

\bib{LZ19}{article}{
    AUTHOR = {Li, Haizhong},
    AUTHOR = {Zhou, Tailong},
     TITLE = {Nonhomogeneous inverse {G}auss curvature flow in
              {$\mathbb{H}^3$}},
   JOURNAL = {Proc. Amer. Math. Soc.},
    VOLUME = {147},
      YEAR = {2019},
    NUMBER = {9},
     PAGES = {3995--4005},
    
}

\bib{LSW20a}{article}{
    author={Li, Qi-Rui},
    author={Sheng, Weimin},
    author={Wang, Xu-Jia},
    title={Flow by Gauss curvature to the Alexandrov and dual Minkowski
	       problems},
    journal={J. Eur. Math. Soc. (JEMS)},
    volume={22},
    date={2020},
    number={3},
    pages={893--923},
}

\bib{M12}{article}{
      title={Mixed volume preserving curvature flows in hyperbolic space}, 
      author={Matthias Makowski},
      year={2012},
      eprint={arXiv:1208.1898},
}

\bib{O83}{article}{
  author={V.I. Oliker}, 
  title={Existence and uniqueness of convex hypersurfaces with prescribed Gaussian curvature in
     spaces of constant curvature}, 
  journal={Sem. Inst. Mate. Appl. “Giovanni Sansone”, Univ. Studi Firenze,}, 
  date={1983},
}
\bib{O89}{article}{
    AUTHOR = {V.I. Oliker},
     TITLE = {The {G}auss curvature and {M}inkowski problems in space forms},
 BOOKTITLE = {Recent developments in geometry ({L}os {A}ngeles, {CA}, 1987)},
    SERIES = {Contemp. Math.},
    VOLUME = {101},
     PAGES = {107--123},
 PUBLISHER = {Amer. Math. Soc., Providence, RI},
      YEAR = {1989},
 
}

\bib{P73}{book}{
    AUTHOR = {Pogorelov, A. V.},
     TITLE = {Extrinsic geometry of convex surfaces},
    SERIES = {Transl. Math. Monogr.}, 
    VolUme = {35},
 PUBLISHER = {Amer. Math. Soc, Providence, RI},
      YEAR = {1973},
}

	\bib{S15}{article}{
    AUTHOR = {Scheuer, Julian},
     TITLE = {Non-scale-invariant inverse curvature flows in hyperbolic
              space},
   JOURNAL = {Calc. Var. Partial Differential Equations},
    VOLUME = {53},
      YEAR = {2015},
    NUMBER = {1-2},
     PAGES = {91--123},
    
}	

\bib{S'15}{article}{
    AUTHOR = {Scheuer, Julian},
     TITLE = {Gradient estimates for inverse curvature flows in hyperbolic
              space},
   JOURNAL = {Geom. Flows},
    VOLUME = {1},
      YEAR = {2015},
    NUMBER = {1},
     PAGES = {11--16},

}

\bib{SX19}{article}{
	author={Scheuer, Julian},
	author={Xia, Chao},
	title={Locally constrained inverse curvature flows},
	journal={Trans. Amer. Math. Soc.},
	volume={372},
	number={10},
	year={2019},
	pages={6771--6803},
}
\bib{Sch13}{book}{
  author={Schneider, Rolf}, 
  title={Convex Bodies: The Brunn-Minkowski Theory},
  series={Encyclopedia of Math. Appl.}, 
  volume = {151},
  publisher={Cambridge Univ. Press}, 
  year={2014}, 
  }

\bib{T90}{article}{
    AUTHOR = {Treibergs, Andrejs},
     TITLE = {Bounds for hyperspheres of prescribed {G}aussian curvature},
   JOURNAL = {J. Differential Geom.},
    VOLUME = {31},
      YEAR = {1990},
    NUMBER = {3},
     PAGES = {913--926},
}

\bib{WX14}{article}{
	author={Wang, Guofang},
	author={Xia, Chao},
	title={Isoperimetric type problems and Alexandrov-Fenchel type inequalities in the hyperbolic space},
	journal={Adv. Math.},
	volume={259},
    number = {13},
	pages = {532--556},
	year = {2014},
}


\bib{WWZ20}{article}{
      title={Shifted inverse curvature flows in hyperbolic space}, 
      author={Xianfeng Wang},
      author={Yong Wei},
      author={Tailong Zhou},
      year={2020},
      eprint={arXiv:2004.08822},
}

\bib{W19}{article}{
    AUTHOR = {Wei, Yong},
     TITLE = {New pinching estimates for inverse curvature flows in space
              forms},
   JOURNAL = {J. Geom. Anal.},
    VOLUME = {29},
      YEAR = {2019},
    NUMBER = {2},
     PAGES = {1555--1570},
}

\bib{Y20}{article}{
      title={Prescribed curvature measure problem in hyperbolic space}, 
      author={Fengrui Yang},
      year={2020},
      eprint={arXiv:2008.09729},
}

\end{biblist}
\end{bibdiv}
\end{document}